\documentclass[a4paper,8pt,oneside,bookmarks]{article}
\usepackage[utf8x]{inputenc}
\usepackage{graphicx}
%1.3
%\linespread{1.3}

\usepackage{wrapfig}
\usepackage[pdftex,bookmarks]{hyperref}
\hypersetup{
pdftitle={Stability properties on locally $L^0$-convex modules and conditional Lebesgue property for conditional convex risk measures},
pdfauthor={José M. Zapata}
}

%\usepackage{fancyhdr}
%\pagestyle{fancy}
%\fancyhf{}
%% numery stron: lewa do lewego (marginesu), prawa do prawego 
%\fancyhead[LE,RO]{\textbf{\thepage}} 
%% prawa pagina: zawartość \rightmark do lewego, wewnętrznego (marginesu) 
%\fancyhead[LO]{\small\sffamily \nouppercase{\rightmark}}
%% lewa pagina: zawartość \leftmark do prawego, wewnętrznego (marginesu) 
%\fancyhead[RE]{\small\sffamily \nouppercase{\leftmark}}
%% kreski oddzielające paginy (górną i dolną):
%\renewcommand{\headrulewidth}{0.4pt}
%\renewcommand{\footrulewidth}{0.0pt}

\usepackage[english]{babel}
\usepackage{amsmath}
\usepackage{amsthm}
\usepackage{amsfonts}

% Letras bonitas
\usepackage{mathrsfs} 

%diagramas
\usepackage{amscd}

%citas
\usepackage{epigraph}

%mas mates
%\usepackage{mathptmx}
\usepackage{amssymb}

%modificar tamaño de fórmulas
\usepackage{relsize}

%%%%%%%%%%%%%%%%%%%%%%%%%%%%%%%%%%%%%%%
% Formato página
%------------------------ Page Format --------------------------
\textwidth=145truemm
\textheight=214truemm \headsep=4truemm
\topmargin= 0pt \oddsidemargin=0pt \evensidemargin=0pt
\parindent=16pt
\setcounter{page}{1}
%%%%%%%%%%%%%%%%%%%%%%%%%%%%%%%%%%%%%%%%%%%%%%%%%%%%%%%%%%%%%%

%márgenes
%\usepackage[margin=1.4in]{geometry}

\newcommand{\N}{\mathbb{N}}
\newcommand{\R}{\mathbb{R}}
\newcommand{\E}{\mathbb{E}}
\newcommand{\PP}{\mathbb{P}}
\newcommand{\Q}{\mathbb{Q}}
\newcommand{\F}{\mathcal{F}}
\newcommand{\EE}{\mathcal{E}}
\newcommand{\A}{\mathcal{A}}

\DeclareMathOperator*{\esssup}{ess.sup\,}
\DeclareMathOperator*{\essinf}{ess.inf\,}
\DeclareMathOperator*{\esslimsup}{ess.limsup\, }
\DeclareMathOperator*{\essliminf}{ess.liminf\, }

\DeclareMathOperator{\spa}{\textbf{span}}

\DeclareMathOperator{\dom}{\textbf{dom}}

\DeclareMathOperator{\epi}{\textbf{epi}}
\DeclareMathOperator{\nint}{\textbf{int}}
\DeclareMathOperator{\nVert}{\boldsymbol\Vert}

\DeclareMathOperator{\nmax}{\textbf{max}\,}

\DeclareMathOperator{\ninf}{\textbf{inf}\,}
\DeclareMathOperator{\nsup}{\textbf{sup}\,}

\DeclareMathOperator{\nco}{\textbf{co}}
\DeclareMathOperator*{\nlimsup}{\textbf{limsup}\, }
\DeclareMathOperator*{\nliminf}{\textbf{liminf}\, }
\DeclareMathOperator*{\nlim}{\textbf{lim}\, }

\newtheorem{defn}{Definition}[section]
\newtheorem{rem}{Remark}[section]
\newtheorem{prop}{Proposition}[section]
\newtheorem{lem}{Lemma}[section]
\newtheorem{thm}{Theorem}[section]

\newtheorem{example}{Example}[section]
%\theoremstyle{plain}
%\newtheorem{thm}{\protect\theoremname}
 % \theoremstyle{plain}
  %\newtheorem{prop}[thm]{\protect\propositionname}
  %\theoremstyle{definition}
%  \newtheorem{defn}[thm]{\protect\definitionname}
 % \theoremstyle{remark}
  %\newtheorem{rem}[thm]{\protect\remarkname}
%  \theoremstyle{plain}
 % \newtheorem{lem}[thm]{\protect\lemmaname}
  %\theoremstyle{plain}
%  \newtheorem{cor}[thm]{\protect\corollaryname}
 % \theoremstyle{definition}
  %\newtheorem{example}[thm]{\protect\examplename}

%keywords
\providecommand{\keywords}[1]{\textbf{\textit{Keywords:}} #1}

\begin{document}

%%%%%%%%%%%%%%%%%%%%%%%%%%%%%%%%%%%%%%%%%%%%%%%%%%%%%%%%%%%%

\title{Stability in locally $L^0$-convex modules and a conditional version of James' compactness theorem}

\author{José Orihuela \thanks{Universidad de Murcia, Dpto. Matemáticas, 30100 Espinardo, Murcia, Spain, e-mail: joseori@um.es}, José M. Zapata \thanks{Universidad de Murcia, Dpto. Matemáticas, 30100 Espinardo, Murcia, Spain, e-mail: jmzg1@um.es. } \thanks{Second author was partially supported by the grant MINECO MTM2014-57838-C2-1-P}}

%\date{\today}
\maketitle

\begin{abstract}

Locally $L^0$-convex modules were introduced in [D. Filipovic, M. Kupper, N. Vogelpoth. Separation and duality in locally $L^0$-convex modules. J. Funct. Anal. 256(12), 3996-4029 (2009)] as the analytic basis for the study of conditional risk measures. Later,  the algebra of conditional sets was introduced in [S. Drapeau, A. Jamneshan, M. Karliczek, M. Kupper. The algebra of conditional sets and the concepts of conditional topology and compactness. J. Math. Anal. Appl. 437(1), 561-589 (2016)]. In this paper we study locally $L^0$-convex modules, and find exactly which subclass of locally $L^0$-convex modules can be identified with the class of locally convex vector spaces within the context of conditional set theory. Second, we provide a version of the classical James' theorem of characterization of weak compactness for conditional Banach spaces. Finally, we state a conditional version of the Fatou and Lebesgue properties for conditional convex risk measures and, as application of the developed theory, we stablish a version of the so-called Jouini-Schachermayer-Touzi theorem for robust representation of conditional convex risk measures defined on a $L^\infty$-type module.

%In this paper we first study how the notion of  locally $L^0$-convex module introduced in [D. Filipovic, M. Kupper, N. Vogelpoth. Separation and duality in locally $L^0$-convex modules. Journal of Functional Analysis, 256(12), 3996-4029 (2009)] is linked to a corresponding notion of conditional locally convex module within the context of the Conditional set theory introduced in [S. Drapeau, A. Jamneshan, M. Karliczek, M. Kupper. The algebra of conditional sets and the concepts of conditional topology and compactness. Journal of Mathematical Analysis and Applications (2015)]. To this end, we study stability properties on locally $L^0$-convex modules, finding exactly which stability properties are required on a locally $L^0$-convex module and its topology to be a conditional locally convex space. Second, we provide a conditional version of the classical James' Theorem of characterization of weak compactness. Finally, we states a conditional version of the Lebesgue property for conditional convex risk measures and, as application of the developed theory, we stablish a version of the so-known Jouini-Schachermayer-Touzi Theorem for robust representation of conditional convex risk measures defined on a $L^\infty$-type module.     

% S. Drapeau et al. \cite{key-7}

 %\PACS{PACS code1 \and PACS code2 \and more}
 %\subclass{46A20 \and 46A22 \and 46E30 \and 46H25 \and 91B16 \and 91B30 \and 91B70}
%seminosssssdddd
 %\JelClassificationCodes{D81}

\end{abstract}

\keywords{stability properties; locally $L^0$-convex module; conditionally locally convex space;  James' compactness theorem;  conditional convex risk measure; conditional Lebesgue property;  Jouini-Schachermayer-Touzi Theorem}

%Mathematics Subject Classification (2010) 60H10 · 60H30 · 90C47
%JEL Classification G3 · D5

%\keywords{representation; $L^0$-modules; Fatou property; $L^\infty$-type module; Krein-Smulian theorem.}
\section*{Introduction}
\label{intro}

The study of risk measures (or monetary utility functions, i.e. the negative value of a risk measure) was initiated by Artzner et al.\cite{key-24}, by defining and studying the concept of coherent risk measure. Föllmer and Schied \cite{key-25} and, independently, Frittelli and Gianin \cite{key-26} later introduced  the more general concept of convex risk measure. Both kinds of risk measures are defined in a static setting, in which only two instants of time matter, today $0$ and tomorrow $T$; and the analytic framework used is the classical convex analysis, which perfectly applies in this simple model cf.\cite{key-33,key-28,key-27}. For instance, Delbaen \cite{key-35} in the coherent case and later Föllmer et al.\cite{key-36}  in the general convex case, obtained that any convex risk measure $\rho$ defined on $L^\infty(\Omega,\mathcal{F},\PP)$ has a representation formula as follows
%\[
%\begin{array}{cc}
%\rho(x)=\sup_{y\in L^1_-}\left\{\E_\PP[x y] - \alpha(y)\right\} & \textnormal{ for all }x\in L^\infty
%\end{array}
%\] 
\[
\begin{array}{cc}
\rho(x)=\sup \left\{\E_Q[-x] - \alpha(Q)\:;\: Q\ll\PP\right\} & \textnormal{ for all }x\in L^\infty
\end{array}
\] 
 if, and only if, $\rho$ is order lower semicontinuous  —equivalently, $\rho$ has the Fatou property—. Moreover, the so-called Jouini-Schachermayer-Touzi theorem \cite[Theorem 2]{key-33} (see also \cite[Theorem 5.2]{key-34} for the original reference) states that the representation formula is attained  —i.e, the supremum turns out to be a maximum— for all $x\in L^\infty$ if, and only if, $\rho$ is order continuous —equivalently, $\rho$ has the Lebesgue property—, which is also equivalent to the weak compactness of the sublevel sets of $\alpha$.  

However, when dealing a dynamic or multiperiod setting, in which the arrival of new information at an intermediate time $0<t<T$ can be taken into account, it is quite delicate to apply convex analysis, as Filipovic et al.\cite{key-10} explained. In order to overcome these difficulties Filipovic et al.\cite{key-10} proposed to consider a modular framework, where scalars are random variables instead of real numbers. Namely, they considered modules over $L^0(\Omega,\mathcal{F},\PP)$ the ordered ring (of equivalence classes) of $\F$-measurable random variables, where $\F$ is a $\sigma$-algebra that models the market information available at some time $t$. For this purpose, they established the concept of locally $L^0$-convex module and proved randomized versions of some important theorems from convex analysis.  

%In this regard, randomly normed $L^0$-modules have been used as tool for the study of ultrapowers of Lebesgue-Bochner function spaces by R. Haydon et al. \cite{key-31}. Further, it must also be highlighted the extensive research done by T. Guo, who has widely researched theorems from functional analysis under the structure of $L^0$-modules; firstly by considering the topology of stochastic convergence with respect to $L^0$-seminorms cf\cite{key-12,key-11}, and later the locally $L^0$-convex topology and the connections between both,  cf\cite{key-5,key-6,key-13}. It is also noteworthy that Eisele and Taieb \cite{key-14}  extended some theorems from functional analysis to modules over the ring $L^\infty$. For versions of Mazur lemma and Krein-\v{Smulian} Theorem for locally $L^0$-convex modules see \cite{key-15}.

Since then, the theory of locally $L^0$-convex has been applied to the study of conditional risk measures. Namely, Filipovic et al.\cite{key-30} used the so-called  $L^p$-type modules as a model space. For a given probability space $(\Omega,\EE,\PP)$, a sub-$\sigma$-algebra $\F$ and $1\leq p \leq \infty$, the $L^p$-type module, denoted by $L^p_\F(\EE)$, is defined as the smallest $L^0(\F)$-submodule of $L^0(\EE)$ containing the space $L^p(\EE)$ of measurable functions, i.e. $L^p_\F(\EE)=L^0(\F)L^p(\EE)$.   They considered conditional $L^0(\F)$-convex risk measures as a $L^0(\F)$-convex, cash-invariant and monotone function from $L^p_\F(\EE)$ to $L^0(\F)$. 

%Later, the Fatou property for conditional $L^0$-convex risk measures defined on a $L^\infty$-type module was studied, obtaining that the existing representation result for static convex risk measures can be successfully extended to the modular approach, cf.\cite{key-37,key-15}. 

Recently, S. Drapeau et al.\cite{key-7}, in a more abstract level than $L^0$-theory, created a new framework, called the algebra of conditional sets, in which stability properties are supposed on all structures and the techniques developed in the $L^0$-theory can be applied in a structured way. They also introduced the notion of conditional real line and showed its relation with $L^0$. Then, they succeeded in constructing a conditional topology and a conditional real analysis, proving conditional versions of some classical theorems of topology and functional analysis  in this framework. 

%Working with scalars into $L^0$ instead of $\R$ implies some difficulties. For example, $L^0$ neither is a field, nor is endowed with a total order, further the locally $L^0$-convex topology lacks of a countable neighborhood base of $0\in L^0$. Among others difficulties, this is why arguments given to prove theorems from functional analysis often fail under the structure of $L^0$-module. For this reason many works often consider additional 'stability conditions' or 'countable concatenation properties' on either the algebraic structure or the topological structure cf.\cite{key-10,key-5,key-6,key-13,key-15,key-17}.

In the first part of this paper, we look deeper into the connection between locally $L^0$-convex modules and the  conditional set theory. Namely, for fixed the measure algebra associated to an underlying probability space, we find, in terms of a equivalence of categories, which class of locally $L^0$-convex modules can be identified with the class of conditionally locally convex vector spaces. This will allow us to apply the machinery of conditional set theory to locally $L^0$-convex modules and, conversely, to draw from $L^0$-theory some existing important results  (for instance, theorems from \cite{key-10,key-5,key-6,key-13}) to conditional set theory when the underlying boolean algebra is a measure algebra. Also, in a negative direction, we show examples of locally $L^0$-convex that lack stability on either the algebraic structure or the topological structure, highlighting that some locally $L^0$-convex modules fall outside the scope of conditional set theory. For instance, we find that the weak topologies that typically have been employed for topological $L^0$-modules are not necessarily stable, and we need to consider a finer version of they in order to allow the conditional set approach.      

 In the recent literature about dynamic or conditional risk measures, some results on robust representation  have been obtained cf.\cite{key-20,key-42,key-30,key-37}. For instance, Deflefsen and Scandolo \cite{key-20} showed that, for a given   probability space $(\Omega,\F,\PP)$ and a sub-$\sigma$-algebra $\F$ of $\EE$, if $\rho:L^\infty(\EE)\rightarrow L^\infty(\F)$ is a risk measure which is $L^\infty(\F)$-cash invariant and $L^0(\F)$-convex, then it has the Fatou property if, and only if, it can be represented as follows
\begin{equation}
\label{eq: FatouDS}
\begin{array}{cc}
\rho(x)=\esssup\left\{ \E_Q[-x|\F] - \alpha(Q) \:;\: Q\ll\PP,\: Q|_\F=\PP|_\F\right\} & \textnormal{ for }x\in L^\infty(\EE).
\end{array}
\end{equation}

However, the attainability of these representations in terms of some compactness condition has not been studied so far. Therefore, another contribution of the present paper is to state a suitable Lebesgue property for the dynamic case, and provide a version of the mentioned Jouini-Schachermayer-Touzi theorem for conditional convex risk measures defined on a $L^\infty$-type module. In this way, by using scalarization techniques and the machinery of conditional set theory, we characterize the robustness of a conditional risk measure in terms of the conditional Lebesgue property and also in terms  of conditional compactness. As a consequence, we also provide a result showing that, for each $x\in L^\infty(\EE)$, the supremum (\ref{eq: FatouDS}) turns out to be a maximum if, and only if, some conditional compactness condition is fulfilled. 

The proof of the Jouini-Schachermayer-Touzi theorem is typically based on versions of original version of the classical James’ theorem for weak compactness cf.\cite{key-33,key-34}. Thus, another contribution, which will be a key piece in the proof of the version of Jouini-Schachermayer-Touzi theorem provided in this paper, is a perturbed version of James’ compactness theorem in the framework
of conditional Banach spaces. As a particular case, we will also obtain a non perturbed version of this theorem in this framework.

This paper is structured as follow. In Section 1, we first study stability properties on locally $L^0$-convex modules, and collect some results and examples exhibiting how the different types of stability properties affect the algebraic and topological structures of the locally $L^0$-convex modules; and second, we recall the framework of conditional sets —this setting will be use in the remainder of this work—, and show how locally $L^0$-modules are connected with conditional sets. In Section 2, we obtain a conditional version of the classical  James' compactness theorem; and, as application,  we define conditional versions of the Fatou and Lebesgue properties, and prove a version of Jouini-Schachermayer-Touzi theorem for conditional convex risk measures on a $L^\infty$-type module.

%The paper is structured as follows: In the first section, as mentioned, we study different stability properties and show exactly which stability properties are required on a locally $L^0$-convex module to be a conditional locally convex space. In the second section, we provide the announced conditional extensions of the classical James' weak compactness Theorem. In the third section, we finally apply the developed theory in order to obtain the mentioned conditional version of Jouini-Schachermayer-Touzi Theorem. 

%Throughout this paper, we shall make a strong and continuous use of the language of conditional sets.  Given that the theory of conditional sets is an extensive theoretical development, there is no room to give an exhaustive review of it. Therefore the reader should be a little bit familiarized with this theory, and we refer anyone who does not know this theory to \cite{key-7} for seeing any detail on this topic.

\section{Locally $L^0$-convex modules and conditionally locally convex vector spaces}

%
%In the first subsection, we will first put order in the stability properties defined in the existing literature. With this aim, we will collect some results and examples exhibiting how the different types of stability properties affect to the algebraic and topological structures of the locally $L^0$-convex modules.   
%
%In the second subsection, we will study how the notion of locally $L^0$-convex module introduced in
%\cite{key-10}, is linked to a corresponding notion of conditional locally convex space within the context
%of the conditional set theory introduced in \cite{key-7}. We will recall the setting of the conditional set theory, and  we finally find exactly which stability properties are required on a locally $L^0$-convex module and its topology to be a conditional locally convex space.

    %This will allow to recover important results from the theory of locally $L^0$-convex modules,
%such as ..., which we will able to apply within the theory of locally convex spaces.

\subsection{Locally $L^0$-convex modules and stability properties}

%%%%%%%%%%%%%%%%%%%%%%%%%%%%%%%%%%%
First and for the convenience of the reader, let us give some notation. Let $\left(\Omega,\mathcal{F},\PP\right)$ be a given probability space, and let us consider $L^{0} \left(\Omega,\mathcal{F},\PP\right)$, or simply  $L^{0}$, the set of equivalence classes of  real valued $\mathcal{F}$-measurable random variables. It is known that the triple $\left(L^{0},+,\cdot\right)$ endowed with the partial order of  almost sure dominance is a lattice ordered ring. We will follow the common practice of identifying a random variable with its equivalence class.

Given $\eta,\xi\in L^0$,  we will write $\eta\geq \xi$ if $\PP\left( \eta\geq\xi\right)=1$, and $\eta>\xi$ if $\PP\left( \eta> \xi \right)=1$. %And, given $A\in \mathcal{F}$, we will write $x>y$ (respectively,  $x \geq y$) on $A$, if $\PP\left(x>y \mid A\right)=1$ (respectively , if $\PP\left(x \geq y \mid  A \right)=1$). 
We also define $L_{+}^{0}:=\left\{ \eta\in L^{0}\:;\: \eta\geq 0\right\}$ and $L_{++}^{0}:=\left\{ \eta\in L^{0}\:;\: \eta>0  \right\}$. %and $\mathcal{F}^+:=\{A\in \mathcal{F} ; \: \PP(A)>0 \}$. 
 We will denote by $\bar{L^{0}}$, the set of equivalence classes of  $\mathcal{F}$-measurable random variables taking values in $\overline{\R}=\R\cup\{\pm\infty\}$. The partial order of  almost sure dominance is extended to $\bar{L^{0}}$ in a natural way. Furthermore, given a subset $H\subset L^{0}$, then $H$ has both an infimum and a supremum in $\bar{L^{0}}$ for the order of  almost sure dominance that will be denoted by $\essinf H$ and $\esssup H$, respectively. %Besides, we follow the convention $0 \cdot(+\infty) := 0$. 

This order also allows us to define a topology. We define $B_{\varepsilon}:=\left\{ \eta\in L^{0}\:;\:\left|\eta\right|\leq\varepsilon\right\}$ the ball of radius $\varepsilon\in L_{++}^{0}$ centered at $0\in L^{0}$. Then, $\mathscr{U}:=\left\{ \eta+B_{\varepsilon}\:;\:\eta\in L^0,\:\varepsilon\in L_{++}^{0}\right\} $ is a neighborhood base of a Hausdorff topology on $L^{0}$  (see Filipovic et al.\cite{key-10}). 

We also define the measure algebra associated to $\F$, denoted by $\A_\F$ —or simply $\A$—, obtained by identifying two events of $\F$ if, and only if, their symmetric difference is $\PP$-negligible. We will denote by $a,b,...$ the elements of $\A$, and by  $0$ and $1$ the equivalence classes of $\emptyset$ and $\Omega$, respectively.\footnote[1]{This notation is used in accordance with \cite{key-7}.} In this way, we obtain a complete Boolean algebra $(\A,\vee,\wedge,1,0)$. From time to time we will identify an element of $\A$  with some representative of $\F$.  

Given $a\in\A$, where $a$ is the equivalence class of some $A\in\F$, we define $1_a$ as the equivalence class in $L^0$ of the characteristic function $1_A$ (note that this definition does not depend on the representative $A$). 

We also define the set of partitions of $a$ given by $p(a):=\{ \{a_k\}_{k\in \N} \subset \A \:;\: \vee a_k=a\textnormal{, }a_i \wedge a_j = 0, \textnormal{ for all }i\neq j\textnormal{, }i,j\in \N  \}$.  Note that we allow $a_k=0$ for some $k\in\N$. 

We also denote by $\A_a:=\{b\in\A\:;\: b\leq a \}$ the trace of $\A$ on $a$, which is also a complete Boolean algebra and can be identified with the measure algebra associated to the probability space $(A,\F_A,\PP(\cdot|A))$ where $A$ is a representative of $a$ and $\F_A:=\{B\in\F\:;\: B\subset A \}$.

%%%%%%%%%%%%%%%%%%%%%%%%%%%%%%%%%%%%%%%%%%%%%%%%%%%%%%%%%%%%%%%%%%%%%%%%%%%%%%%%%%%%%%%%%%%%%%%%%%%%%%%
Let us recall some notions of the theory of locally $L^0$-convex modules:

\begin{itemize}
	\item 

%\begin{defn}
\label{defn: L0module}\cite[Definition 2.1]{key-10}
A topological $L^{0}$-module $E\left[\mathscr{T}\right]$ is a $L^{0}$-module $E$ endowed with a topology $\mathscr{T}$ such that 
\[
\begin{array}{cc}
E\left[\mathscr{T}\right]\times E\left[\mathscr{T}\right]\longrightarrow E\left[\mathscr{T}\right],\left(x,x'\right)\mapsto x+x', & L^{0}\left[\left|\cdot\right|\right]\times E\left[\mathscr{T}\right]\longrightarrow E\left[\mathscr{T}\right],\left(\eta,x\right)\mapsto {\eta}x
\end{array}
\]
are continuous with the corresponding product topologies.
%\end{defn}

\item
%\begin{defn}
\label{defn: conveL0mod}
\cite[Definition 2.2]{key-10}
A topology $\mathscr{T}$ on a $L^{0}$-module $E$ is said to be locally $L^{0}$-convex 
 if there is a neighborhood base $\mathcal{U}$ of $0\in{E}$  such that each $U\in\mathcal{U}$ is
\begin{enumerate}
\item $L^{0}$-convex, i.e. ${\eta}x+{(1-\eta)}y\in U$ for all $x,y\in U$
and $\eta\in L^{0}$ with $0\leq \eta\leq 1$;
\item $L^{0}$-absorbent, i.e. for all $x\in E$ there is a $\eta\in L_{++}^{0}$
such that $x\in {\eta}U$;
\item $L^{0}$-balanced, i.e. ${\eta}x\in U$ for all $x\in U$ and $\eta\in L^{0}$
with $\left|\eta\right|\leq 1$.
\end{enumerate}
In this case, $E\left[\mathscr{T}\right]$ is called a locally $L^0$-convex module. 
%\end{defn}

\item
%\begin{defn}
\label{defn: seminorm}
\cite[definition 2.3]{key-10}
A function $\left\Vert \cdot\right\Vert :E\rightarrow L_{+}^{0}$
is a $L^{0}$-seminorm on $E$ if:
\begin{enumerate}
\item $\left\Vert {\eta}x\right\Vert =\left|\eta\right|\left\Vert x\right\Vert $
for all $\eta\in L^{0}$ and $x\in E$;
\item $\left\Vert x+y\right\Vert \leq\left\Vert x\right\Vert +\left\Vert y\right\Vert$, 
for all $x,y\in E.$
\end{enumerate}
If moreover, $\left\Vert x\right\Vert=0$ implies $x=0$, 
then $\left\Vert \cdot\right\Vert $ is a $L^{0}$-norm on $E$.
%\end{defn}
\end{itemize}

Let $\mathscr{P}$ be a family of $L^0$-seminorms on a $L^{0}$-module
$E$. Given  a finite subset $F$ of $\mathscr{P}$ and $\varepsilon\in L_{++}^{0}$, 
we define 

\[
\begin{array}{cc}
U_{F,\varepsilon}:=\left\{ x\in E\:;\:\left\Vert x\right\Vert_F \leq\varepsilon\right\}, & \textnormal{ where }\left\Vert x\right\Vert_F:=\esssup\{\Vert x\Vert \:;\: \Vert\cdot\Vert\in F \}.
\end{array}
\]

Then $\mathscr{U}:=\left\{ U_{F,\varepsilon}\:;\:\varepsilon\in L_{++}^{0},\: F\subset \mathscr{P}\textnormal{ finite} \right\} $ is a neighborhood base of $0\in E$ for some locally $L^0$-convex topology $\mathcal{T}$, which is called the topology induced by $\mathscr{P}$ (see \cite{key-10}). $E$ endowed with this topology is denoted by $E\left[\mathscr{P}\right]$.

%~\\

Let us recall more notions:

\begin{itemize}
	\item Given a sequence $\{x_k\}$ in a $L^0$-module $E$ and a partition $\left\{ a_k\right\}\in p(1)$, an element $x\in E$ is said to be a \textit{concatenation} of $\left\{ x_{k}\right\}$ along $\left\{ a_k\right\}\in p(1)$ if $1_{a_k}x_k=1_{a_k}x$ for all $k\in\N$.
\item Let $E$ be a $L^0$-module, a non-empty subset $K\subset E$ is said to be \textit{stable}, if for each sequence $\left\{ x_{k}\right\}$ in $K$ and each partition $\left\{ a_{k}\right\}\in p(1)$, there exists a unique concatenation $x\in E$ of $\{x_k\}$ along $\{a_k\}$.
\end{itemize}

It is important to highlight that there are examples of $L^0$-modules such that, for $\{x_k\}\subset E$ and $\{a_k\}\in p(1)$, there can be more than one concatenation of $\{x_k\}$ along $\{a_k\}$ as \cite[Example 1.1]{key-15} exhibits. However, if $E$ is stable, then for each sequence $\{x_k\}\subset E$ and every partition $\{a_k\}\in p(1)$ there exists a unique concatenation $x\in E$ of $\{x_k\}$ along $\{a_k\}$, and we will use the notation $x=\sum_{k\in\N} 1_{a_k}x_k$. Also, it is important to note that not every $L^0$-module is stable, even if it has uniqueness for concatenations; for instance, see \cite[Example 2.12]{key-10}.  

%The following example from \cite{key-10} shows that not every $L^0$-module is stable:
%
%\begin{example}
%%(\cite{key-8})
%\label{ex: ccp}
%Consider the probability space $\Omega = [0, 1]$, $\mathcal{F} = B[0, 1]$ the Borel $\sigma$-algebra and
%$\PP:=\lambda$ the Lebesgue measure on $[0, 1]$ and define
%
%\[
%E := span_{L^0} \left\{1_{[1−\frac{1}{2^{n−1}},1−\frac{1}{2^n}]}\: ; \: n \in \N\right\}.
%\]
%$E$ is a $L^0$-module that is not stable.
%\end{example}

For any family of $L^0$-seminorms, one can also define a topology by using another method, which has been treated in the literature under  different approaches (see \cite[Definition 2.20]{key-10} and comments after Definition 2.5 and after Remark 3.5 of \cite{key-6}):

\begin{defn}
Let $\mathscr{P}$ be a family of $L^0$-seminorms on a $L^{0}$-module
$E$. Given a partition $\{a_k\}\in p(1)$,  a family $\{F_k\}_{k\in\N}$ of non-empty finite subsets of $\mathscr{P}$, and $\varepsilon\in L_{++}^{0}$,  we define 

\[
\begin{array}{cc}
U_{\{F_k\},\{a_k\},\varepsilon}:=\left\{ x\in E\:;\: \sum 1_{a_k} \left\Vert x\right\Vert_{F_k}\leq\varepsilon\right\} & \textnormal{ with }\left\Vert x\right\Vert_{F_k}:=\esssup\{\Vert x\Vert \:;\: \Vert\cdot\Vert\in F_k \}.
\end{array}
\]
Then 
\[
\mathscr{U}:=\left\{U_{\{F_k\},\{a_k\},\varepsilon}\:;\:\varepsilon\in L_{++}^{0},\: F_k\subset \mathscr{P} \textnormal{ finite for all }k\in\N,\{a_k\}\in p(1) \right\}
\]
is a neighborhood base of $0\in E$ for some locally $L^0$-convex topology on $E$, which is finer than the topology induced by $\mathscr{P}$. This topology will be referred to as \emph{the topology stably induced} by $\mathscr{P}$, and $E$, endowed with this topology, will be denoted by $E\left[\mathscr{P}_{cc}\right]$.
\end{defn}

%\begin{rem}
%Note that for a be a family of $L^0$-seminorms $\mathcal{P}$, on a $L^{0}$-module $E$, satisfying that for every finite subset $F$ of $\mathcal{P}$
%$E$
%\end{rem}
 %
%Given a countable family $\{U_k\}$ of non empty subsets of $E$ and $\{a_k\}\in p(1)$, we denote by $cc(a_k,U_k)$ the set of elements $x\in E$ that are concatenations of $\{a_k\}\in p(1)$ and some sequence $\{x_k\}$ with $x_k\in U_k$.

%In \cite[Theorem 2.6]{key-17}, it was provided a characterization of the locally $L^0$-convex topologies that are induced by a family of $L^0$-seminorms. Further, in \cite{key-17}, and independently in \cite{key-18},  a counterexample of locally $L^0$-topology which is not induced by a family of $L^0$-seminorms was given. In the same line, we have the following result:\footnote[2]{As in \cite[Theorem 2.6]{key-17}, one can provide a general statement for arbitrary topological $L^0$-modules, but this implies to introduce several forms of stability as was done in \cite{key-17}. For the sake of convenience, we suppose that $E$ is stable. }

We have the following result:

\begin{thm}
\label{thm: caracterizacionII}
Let $E\left[\mathscr{T}\right]$ be a stable topological $L^{0}$-module. Then $\mathscr{T}$ is stably induced by a family of $L^0$-seminorms if, and only if, there is a neighborhood base $\mathscr{U}$ of $0\in{E}$ for which 
\begin{enumerate}
\item each $U\in\mathscr{U}$ is $L^{0}$-convex, $L^{0}$-absorbent, $L^{0}$-balanced and stable;
\item and for each $\{a_k\}\in p(1)$ and $\{U_k\}\subset \mathscr{U}$ it holds $\sum 1_{a_k}U_k\in\mathscr{U}$.
\end{enumerate}
%In this case, $E[\tau]$ is called a stable locally $L^0$-convex module.

In this case, $\mathscr{T}$ is \textit{stable}; that is, for every $\left\{ a_{k}\right\}\in p(1)$ and every countable family of non-empty open sets $\left\{O_{k}\right\}$, it holds that $\sum 1_{a_k} O_k$ is again an open set. 
\end{thm}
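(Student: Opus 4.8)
The plan is to prove the two implications separately and then read off the stability of $\mathscr{T}$ from the characterisation. The technical engine throughout is the \emph{locality identity} $\Vert 1_a x\Vert = 1_a\Vert x\Vert$, valid for any $L^0$-seminorm (it is the instance $\eta=1_a$ of the homogeneity axiom), together with the uniqueness of concatenations in the stable module $E$; these two facts let one pass freely between a concatenation $x=\sum 1_{c_j}x_j$ and the corresponding concatenation $\Vert x\Vert_{F}=\sum 1_{c_j}\Vert x_j\Vert_{F}$ of the values of the (semi)norms.

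For the \emph{only if} part, I would assume $\mathscr{T}$ is stably induced by a family $\mathscr{P}$ of $L^0$-seminorms and take $\mathscr{U}$ to be the canonical base $\{U_{\{F_k\},\{a_k\},\varepsilon}\}$. That each member is $L^0$-convex, $L^0$-absorbent and $L^0$-balanced follows from the seminorm axioms exactly as in \cite{key-10} (for $L^0$-absorbency one may exhibit $\lambda:=\varepsilon^{-1}(1+\sum 1_{a_k}\Vert x\Vert_{F_k})\in L_{++}^0$). For stability of $U_{\{F_k\},\{a_k\},\varepsilon}$ I would take $\{x_j\}$ in it and $\{c_j\}\in p(1)$, form the unique concatenation $x=\sum 1_{c_j}x_j$ in $E$, and use locality to get $\Vert x\Vert_{F_k}=\sum_j 1_{c_j}\Vert x_j\Vert_{F_k}$, so that $\sum_k 1_{a_k}\Vert x\Vert_{F_k}=\sum_j 1_{c_j}\sum_k 1_{a_k}\Vert x_j\Vert_{F_k}\leq\varepsilon$. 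For condition (2), given $\{a_k\}\in p(1)$ and $U_k=U_{\{F^k_j\}_j,\{c^k_j\}_j,\varepsilon_k}$, the key computation is that, again by locality and the stability of $E$, membership $x\in\sum_k 1_{a_k}U_k$ is equivalent to $\sum_{k,j}1_{a_k\wedge c^k_j}\Vert x\Vert_{F^k_j}\leq\delta$ with $\delta:=\sum_k 1_{a_k}\varepsilon_k\in L_{++}^0$; since $\{a_k\wedge c^k_j\}_{k,j}$ is, after re-indexing as a sequence, a partition in $p(1)$, this says precisely $\sum_k 1_{a_k}U_k=U_{\{G_l\},\{b_l\},\delta}\in\mathscr{U}$.

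For the \emph{if} part, assume $\mathscr{U}$ satisfies (1) and (2). By (1) the topology $\mathscr{T}$ is locally $L^0$-convex, so by \cite[Theorem 2.4]{key-10} (cf.\ its proof) it is induced by the family $\mathscr{P}$ of gauges $\Vert x\Vert_U:=\essinf\{\lambda\in L_{++}^0\:;\:x\in\lambda U\}$, $U\in\mathscr{U}$, which are $L^0$-seminorms; that is, $\mathscr{T}_{\mathscr{P}}=\mathscr{T}$. Since $\mathscr{T}_{\mathscr{P}}\subseteq\mathscr{T}_{\mathscr{P}_{cc}}$ always holds, it remains to check $\mathscr{T}_{\mathscr{P}_{cc}}\subseteq\mathscr{T}$. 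Here I would first verify the identity $U_{\{F_k\},\{a_k\},\varepsilon}=\sum_k 1_{a_k}U_{F_k,\varepsilon}$ (a direct consequence of locality and of $E$ being stable): a point $x$ lies in the left-hand side iff $1_{a_k}x\in U_{F_k,\varepsilon}$ for every $k$, and then $x=\sum_k 1_{a_k}(1_{a_k}x)$ exhibits it in the right-hand side. Since each $U_{F_k,\varepsilon}$ is a $\mathscr{T}_{\mathscr{P}}$-neighbourhood of $0$, hence a $\mathscr{T}$-neighbourhood, there is $W_k\in\mathscr{U}$ with $W_k\subseteq U_{F_k,\varepsilon}$; by (2), $\sum_k 1_{a_k}W_k\in\mathscr{U}$, and $\sum_k 1_{a_k}W_k\subseteq\sum_k 1_{a_k}U_{F_k,\varepsilon}=U_{\{F_k\},\{a_k\},\varepsilon}$, so the latter is a $\mathscr{T}$-neighbourhood of $0$. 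Hence $\mathscr{T}=\mathscr{T}_{\mathscr{P}_{cc}}$.

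Finally, for the stability of $\mathscr{T}$ under (1)--(2): given $\{a_k\}\in p(1)$ and non-empty open sets $\{O_k\}$, pick $x=\sum_k 1_{a_k}x_k\in\sum_k 1_{a_k}O_k$ and choose $U_k\in\mathscr{U}$ with $x_k+U_k\subseteq O_k$; by (2), $V:=\sum_k 1_{a_k}U_k\in\mathscr{U}$, so $x+V$ is a neighbourhood of $x$, and for $y=x+v$ with $v=\sum_k 1_{a_k}v_k$, $v_k\in U_k$, one has $1_{a_k}y=1_{a_k}(x_k+v_k)$ for all $k$, whence $y=\sum_k 1_{a_k}(x_k+v_k)\in\sum_k 1_{a_k}O_k$. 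Thus $x+V\subseteq\sum_k 1_{a_k}O_k$ and $\sum_k 1_{a_k}O_k$ is open. I expect the main obstacle to be organisational rather than conceptual: the partition bookkeeping in condition (2) of the \emph{only if} part --- recognising a countable concatenation of basic neighbourhoods as a single basic neighbourhood over the common refinement $\{a_k\wedge c^k_j\}$ --- and making sure that every appeal to ``locality'' and to ``the unique concatenation in $E$'' is legitimate, since both the identity $U_{\{F_k\},\{a_k\},\varepsilon}=\sum 1_{a_k}U_{F_k,\varepsilon}$ and the stability of the basic sets rest on them.
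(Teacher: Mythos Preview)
Your argument is correct and follows essentially the same route as the paper: the canonical base for the ``only if'' direction, gauge seminorms plus condition (2) for the ``if'' direction, and the same neighbourhood-pushing argument for the stability of $\mathscr{T}$. One caveat on the citation in the ``if'' part: the paper deliberately invokes \cite[Theorem~2.6]{key-17} rather than \cite[Theorem~2.4]{key-10}, because the latter as originally stated does \emph{not} guarantee that the gauges are $L^0$-seminorms inducing $\mathscr{T}$ without the extra stability hypothesis on the basic sets---counterexamples appear in \cite{key-17} and \cite{key-18}; since your condition (1) includes that stability hypothesis, your argument is fine, but the reference should be to \cite{key-17}.
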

\begin{proof} 
If $\mathscr{T}$ is stably induced by a family of $L^0$-seminorms $\mathscr{P}$, then the family
\[
\mathscr{U}:=\left\{U_{\{F_k\},\{a_k\},\varepsilon}\:;\:\varepsilon\in L_{++}^{0},\: F_k\subset \mathscr{P} \textnormal{ finite for all }k\in\N,\{a_k\}\in p(1) \right\}
\]
is a neighborhood base of $0\in E$ with respect to the topology $\mathscr{T}$, which satisfies the conditions 1 and 2 of the statement. 

Conversely, let $\mathscr{U}$ be a neighborhood base of $0\in E$ satisfying 1 and 2 above.  From \cite[Theorem 2.6]{key-17}, we know that 1 implies that $\mathscr{T}$ is induced by the family of $L^0$-seminorms\footnote[2]{Theorem 2.6 of \cite{key-17}  considers a mild stability property for the elements of the neighborhood base considered in the theorem, namely being closed under countable concatenations. It is easy to show that, when $E$ is stable, a subset $U$ with this property turns out to be the stable. Further, in \cite{key-17}, and independently in \cite{key-18}, a counterexample was provided showing that this extra condition cannot be removed if one wants $\mathscr{T}$ to be induced by a family of $L^0$-seminorms.} $\{p_U\}_{U\in\mathscr{U}}$, where $p_U:E\rightarrow L^0_+$ is the gauge function (see \cite[Definition 2.21]{key-7}). Let us show that, in fact, $\mathscr{T}$ is stably induced by that family. Indeed, let us fix a partition $\{a_k\}\in p(1)$, a sequence $\{F_k\}$ of finite subsets of $\mathscr{U}$ and $\varepsilon\in L^0_{++}$. For each $k\in\N$, let us choose $U_k\in \mathscr{U}$ with $U_k\subset \cap F_k$. Then, inspection shows that 
\[
\left\{ x\in E \:;\: \sum 1_{a_k}\esssup_{U\in F_k} p_U(x)\leq \varepsilon  \right\}\supset\left\{ x\in E \:;\: \sum 1_{a_k} p_{U_k}(x)\leq \frac{\varepsilon}{2}  \right\}= 
\]
\[
=\left\{ x\in E \:;\: p_{\sum 1_{a_k} U_k}(x)\leq \frac{\varepsilon}{2} \right\}.
\]
Since $\sum 1_{a_k}U_k\in\mathscr{U}$, the result follows. 

Finally, given $\left\{ a_{k}\right\}\in p(1)$ and $\left\{O_{k}\right\}$ a countable family of non-empty open sets, let us show that $O:=\sum 1_{a_k}O_k$ is open. Indeed, for a fixed $x\in O$, let $\mathscr{U}$ be a neighborhood base of $0$ as in Theorem \ref{thm: caracterizacionII}. Also, let $\{x_k\}$ be so that $x_k\in O_k$ and $1_{a_k}x_k=1_{a_k}x$ for all $k$. Then, for each $k$, we can choose $U_k\in\mathscr{U}$ with $x_k + U_k\subset O_k$. Therefore $x + \sum 1_{a_k}U_k\subset \sum 1_{a_k}O_k=O$.

\end{proof}

%\begin{prop}
%\label{prop: stable}
%Let $E[\mathscr{T}]$ be a stable locally $L^0$-convex module. If $\mathscr{T}$ is stably induced by a family of $L^0$-seminorms, then  $\mathscr{T}$ is stable.
%\end{prop}
%\begin{proof}
%Let us suppose that $\mathscr{T}$ is stably induced by a family of $L^0$-seminorms. Given $\left\{ a_{k}\right\}\in p(1)$ and $\left\{O_{k}\right\}$ a countable family of non-empty open sets. Let us show that $O:=\sum 1_{a_k}O_k$ is open. Indeed, for a fixed $x\in O$, let $\mathscr{U}$ be a neighborhood base of $0$ as in Theorem \ref{thm: caracterizacionII}. Also, let $\{x_k\}$ be so that $x_k\in O_k$ and $1_{a_k}x_k=1_{a_k}x$ for all $k$. Then, for each $k$, we can choose $U_k\in\mathscr{U}$ with $x_k + U_k\subset O_k$. Therefore $x + \sum 1_{a_k}U_k\subset \sum 1_{a_k}O_k=O$.

%Conversely, being $\mathscr{T}$ locally $L^0$-convex, we can choose a neighborhood base $\mathscr{U}$ of $0\in E$ such that each $U\in\mathscr{U}$ is $L^0$-convex, $L^0$-absorbent and $L ^0$-balanced. Then, 
%\[
%\mathscr{V}:=\left\{\sum 1_{a_k}U_k\:;\:\{a_k\}\in p(1),\: U_k\in\mathscr{U}\right\}
%\]
 %is a neighborhood base of $0\in E$ which satisfies the properties of Theorem  \ref{thm: caracterizacionII}. We conclude that $\mathscr{T}$ is stably induced by a family of $L^0$-seminorms.  
%\end{proof}

\begin{defn}
A stable topological $L^{0}$-module $E\left[\mathscr{T}\right]$ in the conditions of Theorem \ref{thm: caracterizacionII} is called \textit{locally stable $L^0$-convex module}.
\end{defn}

In the following example, we give a locally $L^0$-convex topology which is induced by a family of $L^0$-seminorms and yet it is not stably induced by any family of $L^0$-seminorms.

\begin{example}
\label{ex3}
Let $(\Omega,\mathcal{F},\PP)$ be an atomless probability space and $\left\{ a_{k}\right\}\in p(1)$ with $a_{k}\neq 0$ for each $k\in\N$. Let us take the $L^0$-module $(L^0)^{\N}$. For each $k\in\N$, let us consider the application $p_{k}(x_n):=|x_k|$ with $(x_n)\in(L^0)^{\N}$. Then $\{p_k\:;\: k\in\N \}$ is a family of $L^0$-seminorms which induces the product topology on $(L^0)^{\N}$. However,  it is not stably induced by a family of $L^0$-seminorms. Indeed, let us define $O_1:=(0,1)\times(L^0)^{\N}$, and for each $n>1$, let us put $O_n:=(L^0)^{n-1}\times(0,1)\times(L^0)^{\N}$. Then $\sum 1_{a_k}O_k=\prod_{k\in\N} 1_{a_k}(0,1)+1_{a_k^c}L^0$ is not an open subset. In view of  Theorem \ref{thm: caracterizacionII}, the product topology cannot be stably induced by any family of $L^0$-seminorms.   
\end{example}

Filipovic et al.\cite{key-10} introduced the topological dual of a topological $E[\mathscr{T}]$ $L^0$-module $E$, which is denoted by 
\[
E[\mathscr{T}]^*=E^*=\left\{ \mu :E\rightarrow L^0\:;\:\mu\textnormal{ is }L^0-\textnormal{linear and continuous}\right\}. 
\]

Let $E[\mathscr{T}]$ be a locally $L^0$-convex module. Let us consider the family of $L^0$-seminorms $\{ q_{x^*}\}_{x^*\in E^*}$ defined by $q_{x^*}(x):=|x^*(x)|$ for $x\in E$. Then, we can endow $E$ with the weak topology $E[\sigma(E,E^*)]$ and with the stable weak topology $E[\sigma(E,E^*)_{cc}]$. Analogously, we have the weak-$*$ and the stable weak-$*$ topology.  

The stable weak (resp. weak-$*$) topology is finer than the weak (resp. weak-$*$) topology. The following example shows that both are not necessarily equal, even when $E$ is stable: 

\begin{example}
\label{ex4}
Filipovic et al.\cite{key-10} introduced the following locally $L^0$-convex modules, which are called $L^p$-type modules. Namely, let $(\Omega,\EE,\PP)$ be a probability space such that $\F$ is a sub-$\sigma$-algebra of $\EE$ and $p\in [1,+\infty]$. Then we can define the $L^0(\F)$-module  
$L^p_\F(\EE):=L^0(\F) L^p(\EE)$, which is stable (see \cite[Proposition 3.3]{key-37}), and for which
\[
\left\Vert x | \mathcal{F} \right\Vert _{p}:=\begin{cases}
\E_\PP\left[\left|x\right|^{p}|\mathcal{F}\right]^{1/p} & \textnormal{ if } p<\infty\\
\essinf\left\{ y\in\bar{L}^{0}\left(\mathcal{F}\right)\:;\: y\geq\left|x\right|\right\}  & \textnormal{ if } p=\infty
\end{cases}
\]
defines a $L^0(\F)$-norm.

Besides, it is known that for $1\leq p<+\infty$, if $1<q\leq+\infty$ with $1/p+1/q=1$, the map $T:L^p_\F(\EE)\rightarrow [L^q_\F(\EE)]^*$, $y\mapsto T_y$ defined by $T_y(x):=\E_\PP[x y|\F]$ is a $L^0(\F)$-isometric isomorphism (see \cite[Theorem 4.5]{key-5}).

The weak topologies are defined, and the family of sets 
\[
U_{F,\varepsilon}:=\left\{x\in L^p_\F(\EE)\:;\: |\E_\PP[x y|\F]|\leq\varepsilon,\:\forall y\in F\right\},
\]
where $F$ is a finite subset of $L^q_\F(\EE)$ and $\varepsilon\in L^0_{++}(\F)$, constitutes a neighborhood base of $0\in L^p_\F(\EE)$ for the weak topology.

Let us consider the particular case:  $\Omega=(0,1)$, $\mathcal{E}=\mathcal{B}(\Omega)$ the Borel $\sigma$-algebra, $A_k=[\frac{1}{2^k},\frac{1}{2^{k-1}})$ with $k\in\mathbb{N}$, $\F:=\sigma(\{A_k\:;\:k\in\N\})$ and $\PP:=\lambda$ the Lebesgue measure. Let us denote by $a_k$ the equivalence class of $A_k$ in $\A$.  Also, for each $k\in\N$, let us define $L^2_k:=L^2(A_k)$ considering the trace of $\F$ on $A_k$ and the conditional probability $\PP(\cdot|A_k)$. For each $x\in L^2_k$ we denote $\Vert x|A_k\Vert_2:=\E_\PP[x^2|A_k]^{1/2}$. In this case, $L^0(\F)=\{ \sum_{k\in\N}1_{a_k}r_k\:;\: r_k\in\R\}$, and inspection shows $L^2_\F(\EE)=\{\sum_{k\in\N}1_{a_k}x_k\:;\: x_k|_{A_k}\in L^2_k\}$ and, for $x\in L^2_\F(\EE)$, we have $\Vert x|\F\Vert_2=\sum_{k\in\N}1_{a_k}\Vert x|A_k\Vert_2$.

For each $k$, let us choose a countable set $\{y^k_n\:;\:n\in\N\}$ with $y^k_n|_{A_k}=z^k_n$ where $\{z^k_n\}$ is a linearly  independent subset of $L^2_k$. Let
\[
U:=\sum_{k\in\N} 1_{a_k} U_{\{y^k_1,...,y^k_k\},1}.
\]
We have that $U$ is a neighborhood of $0\in L^2_\F(\EE)$ for the topology $\sigma(L^2_\F(\EE),L^2_\F(\EE))_{cc}$, but not for the topology $\sigma(L^2_\F(\EE),L^2_\F(\EE))$. Indeed, to reach a contradiction, let us assume that there is a finite subset $F$ of $L^2_\mathcal{F}(\mathcal{E})$ and $\varepsilon\in L^0_{++}(\mathcal{F})$, where $\varepsilon=\sum_{k\in\N} 1_{a_k}r_k$ with $r_k\in\R^+$, so that $U_{F,\varepsilon}\subset U$. Now, let us take $k:=\#F + 1$. 

%Considering the random variables restricted to $A_k$ and considering the trace of $\F$ in $A_k$, we can suppose $\Omega=A_k$. Then

Let us define $F_k:=\{y|_{A_k}\:;\: y\in F\}$, then
\[
\begin{array}{cc}
U_{F_k,r_k}\subset U_{\{z^k_1,...,z^k_k\},1} & \textnormal{ in }L^2_k.
\end{array}
\]
For each $y\in L^2_k$, let us denote $\mu_y(x):=\E_{\PP}[x y|A_k]$. Then, it follows that
\[
\bigcap_{y\in F} \ker (\mu_y) \subset \bigcap_{i=1}^k \ker (\mu_{y^k_i}).
\]
But this is impossible, because $\bigcap_{y\in F} \ker (\mu_y)$ is a vector subspace of $L^2_k$ with codimension less than $k$, included into a vector subspace with codimension $k$. 
\end{example}

We have the following result, which relates the stability on the topological structure of $E[\mathscr{T}]$ with the stability on the algebraic structure of $E[\mathscr{T}]^*$:

\begin{prop}
\label{prop: conj}
Let $E[\mathscr{T}]$ be a topological $L^0$-module. If $\mathscr{T}$ is stable, then $E[\mathscr{T}]^*$ is stable.   
\end{prop}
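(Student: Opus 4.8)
The plan is to fix a sequence $\{\mu_k\}\subset E[\mathscr{T}]^*$ and a partition $\{a_k\}\in p(1)$, produce an explicit candidate for the concatenation, and check that it belongs to $E^*$; here $E^*$ carries the natural $L^0$-module structure $(\eta\mu)(x):=\eta\,\mu(x)$, so $1_{a_k}\mu$ denotes the map $x\mapsto 1_{a_k}\mu(x)$. Since $L^0$ is stable, for every $x\in E$ the countable concatenation $\mu(x):=\sum_k 1_{a_k}\mu_k(x)$ is a well-defined element of $L^0$, and this defines a map $\mu:E\to L^0$. That $\mu$ is $L^0$-linear follows at once from the $L^0$-linearity of each $\mu_k$ and the elementary compatibility of the ring operations of $L^0$ with countable concatenations: $\mu(x+y)=\sum_k 1_{a_k}(\mu_k(x)+\mu_k(y))=\mu(x)+\mu(y)$ and $\mu(\eta x)=\sum_k 1_{a_k}\eta\,\mu_k(x)=\eta\,\mu(x)$ for $\eta\in L^0$. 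Moreover $1_{a_k}\mu=1_{a_k}\mu_k$ for every $k$ by construction, so once we know $\mu\in E^*$ it will be a concatenation of $\{\mu_k\}$ along $\{a_k\}$.

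The crux of the argument is the continuity of $\mu$, and this is precisely where the stability of $\mathscr{T}$ is used. Because $E[\mathscr{T}]$ is a topological $L^0$-module, translations are homeomorphisms, so by $L^0$-linearity it suffices to check continuity at $0\in E$; and since the balls $B_\varepsilon$ with $\varepsilon\in L^0_{++}$ form a neighbourhood base of $0\in L^0$, it is enough to find, for each $\varepsilon\in L^0_{++}$, a neighbourhood of $0$ in $E$ mapped by $\mu$ into $B_\varepsilon$. For each $k$, continuity of $\mu_k$ at $0$ gives a non-empty open neighbourhood $O_k$ of $0$ in $E$ with $\mu_k(O_k)\subset B_\varepsilon$. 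Since $\mathscr{T}$ is stable, $O:=\sum_k 1_{a_k}O_k$ is open, and it contains $0$ (the concatenation of the null sequence), hence it is an open neighbourhood of $0$. Given $x\in O$, choose $x_k\in O_k$ with $1_{a_k}x=1_{a_k}x_k$; then $1_{a_k}\mu_k(x)=\mu_k(1_{a_k}x)=\mu_k(1_{a_k}x_k)=1_{a_k}\mu_k(x_k)$, whence $1_{a_k}\lvert\mu_k(x)\rvert\le 1_{a_k}\varepsilon$. Summing over the partition, $\lvert\mu(x)\rvert=\sum_k 1_{a_k}\lvert\mu_k(x)\rvert\le\sum_k 1_{a_k}\varepsilon=\varepsilon$, so $\mu(O)\subset B_\varepsilon$. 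Therefore $\mu$ is continuous, i.e.\ $\mu\in E^*$.

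Uniqueness is then routine: if $\nu\in E^*$ also satisfies $1_{a_k}\nu=1_{a_k}\mu_k$ for all $k$, then $1_{a_k}\nu(x)=1_{a_k}\mu_k(x)=1_{a_k}\mu(x)$ for every $x$ and every $k$, and since $\{a_k\}\in p(1)$ we get $\nu(x)=\sum_k 1_{a_k}\nu(x)=\sum_k 1_{a_k}\mu(x)=\mu(x)$. Thus every sequence in $E^*$ admits a unique concatenation along every partition of $1$, which is exactly the assertion that $E[\mathscr{T}]^*$ is stable. The only delicate step is the continuity of $\mu$, and its whole difficulty is carried by the stability hypothesis on $\mathscr{T}$: without it the set $\sum_k 1_{a_k}O_k$ need not be a neighbourhood of $0$, and the pointwise-defined functional $\mu$ could fail to be continuous.
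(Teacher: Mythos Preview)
Your proof is correct and follows essentially the same approach as the paper: define $\mu(x)=\sum_k 1_{a_k}\mu_k(x)$, reduce continuity to continuity at $0$, use the continuity of each $\mu_k$ to get open neighbourhoods $O_k$ of $0$ with $\mu_k(O_k)\subset B_\varepsilon$, and invoke stability of $\mathscr{T}$ to conclude that $O=\sum_k 1_{a_k}O_k$ is an open neighbourhood of $0$ mapped by $\mu$ into $B_\varepsilon$. Your write-up is in fact more careful than the paper's: you spell out the $L^0$-linearity of $\mu$, you make explicit the computation $1_{a_k}\mu_k(x)=1_{a_k}\mu_k(x_k)$ for $x\in O$ (the paper abbreviates this to an inclusion $\mu(O)\subset\sum 1_{a_k}\mu_k(O_k)$), and you supply the uniqueness argument, which the paper omits entirely.
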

\begin{proof}
 Suppose that $\{\mu_k\}$ is a countable family of continuous $L^0$-linear applications from $E$ to $L^0$ and $\{a_k\}\in p(1)$, then we can define $\mu:=\sum 1_{a_k}\mu_k$, which is a $L^0$-linear application from $E$ to $L^0$. Let us show that $\mu$ is continuous. It suffices to study the continuity at $0\in E$. Fixed $\varepsilon\in L^0_{++}$, for each $k\in\N$ there exists $O_k\in\mathscr{T}$ with $0\in O_k$ so that $\mu(O_k)\subset B_\varepsilon$. If we set $O:=\sum 1_{a_k}O_k$, it is an open neighborhood of $0\in E$ since $\mathscr{T}$ is stable. We obtain $\mu(O)\subset \sum 1_{a_k}\mu(O_k) \subset B_\varepsilon$.
\end{proof}

%\begin{rem}
%The converse of Proposition \ref{prop: conj} is not true. Indeed, 
%\end{rem}

\subsection{Relation between the class of locally $L^0$-convex modules and the class of conditionally locally convex vector spaces}

Once our study on locally $L^0$-convex modules has finished, we turn to recall the basic notions of the theory of conditional sets, which will be the setting used in the remainder of this paper. We will end this section by showing how the notion of locally $L^0$-convex module (with the suitable stability properties) is embedded in this setting.  
Let us recall the notion of conditional set: 

\begin{defn}
\cite[Definition]{key-7}
\label{defn: condSet} A conditional set $\textbf{E}$ of a non-empty set $E$ is a collection $\textbf{E}$ of objects $x|a$ for $x\in E$ and $a\in\A$ satisfying the following three axioms: 
  
\begin{enumerate}
	\item If $x|a=y|b$, then  $a=b$; %In that case $x|a$ is said to be on $a$;
	\item (Consistency) if $x,y\in E$ and $a,b\in\A$ with $a\leq b$, then $x|b=y|b$ implies $x|a=y|a$;
	\item (Stability) if $\{a_k\}\in p(1)$ and $\{x_k\}\subset E$, then there exists exactly one element $x\in E$ such that $x|a_k=x_k|a_k$ for all $k\in\N$. 
\end{enumerate}  
The unique element $x$ provided by 3 is called the \textit{concatenation of} $\{x_k\}$ along $\{a_k\}$, and is denoted by $\sum_{k\in\N} x_k|a_k$, or simply $\sum x_k|a_k$.
\end{defn}

Drapeau et al.\cite{key-7} originally introduced the notion of conditional set on an arbitrary complete Boolean algebra. However, for the purpose of this paper it suffices to consider the measure algebra $\A$ as the underlying Boolean algebra. Notice that, in doing so, we avoid to use non countable partitions, overcoming the difficulties arisen from that.

Another remark is that Drapeau et al.\cite{key-7} provided a general construction for the \textit{conditional real numbers} on an arbitrary Boolean algebra (see \cite[Definition 4.3]{key-7}).  In \cite[Theorem 4.4]{key-7}, it was also showed that, in the particular case in which the underlying Boolean algebra is a measure algebra $\A$,  conditional real numbers can be identified by the following conditional set:  

 On $L^0\times\A$ it can be defined an equivalence relation $\sim$ given by $(\eta, a)\sim(\xi, b)$ if, and only if, $1_a \eta = 1_b \xi$ and $a = b$. 
If we denote by $\eta|a$ the equivalence class of $(\eta,a)$, we have that the related quotient set $\textbf{R}$ is a conditional set of $L^0$ which is referred to as \textit{conditional real numbers}.

We can extend the equivalence relation defined above to $\bar{L^0}\times\A$, by identifying $(\eta, a),(\xi, b)$ whenever $1_a \eta = 1_b \xi$ and $a = b$, understanding $0\cdot(+\infty)=0$. Then, inspection shows that the quotient set $\overline{\textbf{R}}$ is a conditional set which will be called \textit{extended conditional real numbers}.  

More notions from conditional set theory are the following:

Let $\textbf{E}$ be a conditional set of a non-empty set $E$. A non-empty subset $F$ of $E$ is called \textit{stable} if
\[
F=\left\{ \sum x_i|a_i \:;\: \{a_i\}\in p(1),\: x_i\in F\textnormal{ for all }i \right\}.
\]
$S(\textbf{E})$ stands for the set of all stable subsets $F$  of $E$.

The \textit{stable hull} of a non-empty subset $F$ of $E$ is introduced in \cite{key-7} as 
\[
s(F):=\left\{\sum x_i|a_i \:;\: \{a_i\}\in p(1),\: x_i\in F\textnormal{ for all }i\right\},
\]
which is the smallest stable subset containing $F$.

It is known from \cite{key-7} that every set $F\in S(\textbf{E})$ generates a conditional set 
\[
\textbf{F}:=\left\{x|a\:;\: x\in F,\:a\in\A\right\}.
\]

For any non-empty subset $F$ of $E$, $\textbf{s}(F)$ stands for the conditional subset generated by $s(F)$.

\begin{example}
The conditional numbers $\textbf{R}$ is a conditional set of $L^0(\F)$. Let $L^0(\F;\N)$ and $L^0(\F;\Q)$, or simply $L^0(\N)$ and $L^0(\Q)$, denote the sets of (equivalence classes of) $\F$-measurable natural-valued random variables and rational-valued random variables, respectively. Then, if we consider $\N,\Q$ as subset of $L^0$ (by considering constant function in $\N$ or $\Q$, respectively),  it is clear that $s(\N)=L^0(\N)$ and $s(\Q)=L^0(\Q)$. Thereby, we define the conditional sets $\textbf{N}:=\textbf{s}(\N)$ and $\textbf{Q}:=\textbf{s}(\Q)$, which are called  \textit{conditional natural numbers} and \textit{conditional rational numbers}, respectively.

Conditional natural numbers and conditional rational numbers were defined in \cite[Examples 2.3]{key-7} in a general way, but it is also clear from \cite{key-7} that they can be identified with the above conditional sets, and this is the definition that we adopt for the present setting. 
\end{example}

For a given conditional set $\textbf{E}$, we have that $P(\textbf{E})$ denotes the collection of all conditional sets $\textbf{F}$ generated by $F\in S(\textbf{E})$, and the \textit{conditional power set} is defined by   
\[
\textbf{P}(\textbf{E}):=\left\{\textbf{F}|a=\{x|b\:;\: x\in F,\:b\leq a\}\:;\:\textbf{F}\in P(\textbf{E}),\:a\in\A\right\},
\] 
which is a conditional set of $P(\textbf{E})$ (see \cite[Definition 2.7]{key-7}).

Drapeau et al.\cite{key-7} also observed that every element $\textbf{F}|a$ is a conditional set of $F|a:=\{x|a\:;\: x\in F\}$ considering the measure algebra $\A_a$,  with the conditioning $(x|a)|b:=x|b$ for $b\leq a$. Such conditional sets are called \textit{conditional subsets} of $\textbf{E}$. Also, we say that $\textbf{F}|a$ is on $a$. 

Suppose that $\textbf{L}|a$ and $\textbf{M}|b$ are conditional subsets. Then, $\textbf{L}|a$ is said to be conditionally included, or conditionally contained, in $\textbf{M}|b$ if $\textbf{L}|a\subset\textbf{M}|b$. In that case, we use the notation $\textbf{L}|a\sqsubset\textbf{M}|b$, which defines a partial order in $\textbf{P}(\textbf{E})$.

As in \cite{key-7}, the notation $\textbf{F}$ or $\textbf{F}|a$ for a conditional subset on $a$ will be chosen depending on the context.

Drapeau et al.\cite{key-7} introduced operations for conditional subsets: we have the \textit{conditional union}, the \textit{conditional intersection}, and the \textit{conditional complement}, which are denoted by $\sqcup$, $\sqcap$ and ${}^\sqsubset$, respectively. We do not include the construction; instead, we refer to the proof of \cite[Theorem 2.9]{key-7}. Moreover, $(\textbf{P}(\textbf{E}), \sqcup,\sqcap,{}^\sqsubset, \textbf{E}, \textbf{E}|0)$ is a complete Boolean algebra which is called the \textit{algebra of conditional sets} (see \cite[Corollary 2.10]{key-7}).

Let $\textbf{E}$ and $\textbf{F}$ be conditional sets of $E$ and $F$, respectively. A function $f:E\rightarrow F$ is \textit{stable} if $f(\sum x_k|a_k)=\sum f(x_k)|a_k$ for every $\{x_k\}\subset E$ and $\{a_k\}\in p(1)$. A conditional function $\textbf{f}:\textbf{E}\rightarrow\textbf{F}$ is the conditional graph $\textbf{G}_\textbf{f}=\left\{ (f(x)|a,x|a) \:;\:  x\in E,\: a\in\A\right\}$ defined from  on a stable function $f:E\rightarrow F$. Let $\textbf{E}$ and $\textbf{I}$ be conditional sets of $E$ and $I$, respectively. A stable family $\{x_i\}$ in $E$ is the graph $G_f=\left\{(f(i),i) \:;\: i\in I\right\}$ where $f:I\rightarrow E$ is a stable function. A conditional family $\{\textbf{x}_\textbf{i}\}$ in $\textbf{E}$ is a conditional function $\textbf{f}:\textbf{I}\rightarrow\textbf{E}$ (see Definitions 2.17 and 2.20 of \cite{key-7}). 

In general, conditional set theory is an extensive theoretical development, therefore there is no room in this paper to give a detailed explanation.  Instead, we refer the reader to \cite{key-7}, where a comprehensive introduction to conditional set theory is provided.  There, the algebra of conditional set is introduced, and apart from the notions  defined above, one can find all sort of conditional versions of classical concepts such as: \textit{conditional binary relation}, \textit{conditional partial order}, \textit{conditional supremum} (resp. \textit{infimum}), \textit{conditional total order}, \textit{conditional image} and  \textit{conditional antiimage} of a conditional function, \textit{conditionally injective function}, \textit{conditionally surjective function}, \textit{conditionally bijective function} (see \cite[Section 2]{key-7}); \textit{conditional topology}, \textit{conditional topological space}, \textit{conditional topological base}, \textit{conditional closure}, \textit{conditional interior}, \textit{conditional neighborhood base}, \textit{conditionally continuous function}, \textit{conditional convergence}, \textit{conditional compactness}, \textit{conditionally sequentially compactness} (see \cite[Section 3]{key-7}); in \cite[Section 4]{key-7}  \textit{conditional metric spaces} are introduced. %; \textit{conditional metric space} (see \cite[Section 4]{key-7}); \textit{conditionally vector space}, \textit{conditionally locally convex vector space}, conditionally locally convex  topology, conditionally convex set, conditionally absorbent set, conditionally balanced set, conditionally normed space, conditionally Banach space (see \cite[Section 5]{key-7}). 

Also, in \cite[Section 5]{key-7} it was introduced the notion \textit{conditionally vector space} (see \cite[Definition 5.1]{key-7}), and, as particular case, it was also defined \textit{conditionally locally convex vector spaces} (see \cite[Definition 5.4]{key-7}), which will be called \textit{conditionally locally convex spaces} hereafter. Other important notions introduced in \cite[Section 5]{key-7} are the notions of \textit{conditionally convex}, \textit{conditionally absorbent} and \textit{conditionally balanced subset} (see \cite[Definition 5.1]{key-7}). The conditional weak topologies were also introduced in \cite[Section 5]{key-7}. In addition, Zapata \cite{key-38} provided a study of conditional weak topologies based on conditionally locally convex spaces.

In what follows, we adopt the notation and terminology of \cite{key-7} with only three exceptions that will be explained below (which is exactly the notation of \cite{key-38}):

\begin{enumerate}
	\item For given finitely many conditional sets $\textbf{E}_1,\textbf{E}_2,...,\textbf{E}_n$ the conditional product will be denoted by $\textbf{E}_1\Join...\Join\textbf{E}_n$. This notation allows us to distinguish between classical product and conditional product.
	\item Drapeau et al.\cite{key-7} introduced the notion of \textit{conditional element} of a conditional set $\textbf{E}$ of $E$. Namely, a conditional element is the object $\textbf{x}:=\left\{x|a\:;\: a\in\A\right\}$ where $x\in E$. A conditional element is not an element of $\textbf{E}$. However, by consistency, the map that sends the conditional element $\textbf{x}$ to the element $x|1$, is a bijection. Therefore we can make the identification $\textbf{x}\equiv x|1$. Doing so, we can see $\textbf{x}$ as an element of $\textbf{E}$ and use the convenient notation $\textbf{x}\in\textbf{E}$.
	\item Some conditional subsets will be required to be defined by describing their conditional elements. For instance, suppose that $\phi$  is a certain statement which can be true of false for the conditional elements of $\textbf{E}$. Since the family $\{\textbf{x} \in \textbf{E} \:;\: \phi(\textbf{x})\textnormal{ is true}\}$ is not generally a conditional set, we will use the following formal set-builder notation for conditional subsets:
\[
                    [\textbf{x} \in \textbf{E} \:;\: \phi(\textbf{x})\textnormal{ is true}] := \textbf{s} \left(\left\{x \:;\: \phi(\textbf{x})\textnormal{ is true} \right\}\right),
\]  
provided that $\phi$ is true for at least a conditional element $\textbf{x}\in\textbf{E}$ (recall that the stable hull is defined for non-empty sets).
\end{enumerate}

By means of the formal set-builder notation for conditional subsets we define the following conditional subsets of $\textbf{R}$:
  
$\textbf{R}^+:=\left[\textbf{r}\in\textbf{R}\:;\:\textbf{r}\geq \textbf{0}\right]$, $\textbf{R}^{++}:=\left[\textbf{r}\in\textbf{R}\:;\:\textbf{r}>\textbf{0}\right]$, $\overline{\textbf{R}}^+:=\left[\textbf{r}\in\overline{\textbf{R}}\:;\: \textbf{r}\geq \textbf{0}\right]$ and  $\overline{\textbf{R}}^{++}:=\left[\textbf{r}\in\overline{\textbf{R}}\:;\: \textbf{r}>\textbf{0}\right]$.

Given a conditional real number $\textbf{r}=r|1$ we define the conditional inverse as the following conditional real number $\textbf{r}^{-1}:=1_{(r\neq 0)}(r + 1_{(r=0)})^{-1}|1$.

A conditional set $\textbf{E}$ is said to be \textit{conditionally countable} if there is a conditionally injective function $\textbf{f}:\textbf{E}\rightarrow \textbf{N}$; we say that $\textbf{E}$ is \textit{conditionally finite} if there exists a conditional bijection $\textbf{f}:\textbf{E}\rightarrow [\textbf{k}\:;\: \textbf{1}\leq\textbf{k}\leq\textbf{n}]$ for some $\textbf{n}\in\textbf{N}$ (see \cite[Definition 2.23]{key-7}).

By definition, we know that if $(\textbf{E},\boldsymbol{+},\boldsymbol{\cdot})$ with $\boldsymbol{+}:\textbf{E}\Join\textbf{E}\rightarrow\textbf{E}$ and $\boldsymbol{\cdot}:\textbf{R}\Join\textbf{E}\rightarrow\textbf{E}$ is a conditional vector space, then $(E,+,\cdot)$ with $+:E\times E\rightarrow E$ and $\cdot:L^0\times E\rightarrow E$ is a $L^0$-module.  

We have the following result which is almost obvious, but will be useful later:

\begin{prop}
\label{lem:connex}
Let $\textbf{E}$ be a conditional vector space. Let us define the following equivalence relation on $E\times\A$: $(x,a)\sim (y,b)$ if, and only if, $a=b$ and $1_a x=1_b y$. Then, the quotient set $\textbf{F}:=E\times\A/\sim$ is a conditional set of $E$ such that the identity $id:E\rightarrow E$ is a stable function when considering $E$ in the left side as the generating set of $\textbf{E}$ and in the right side as the generating set of $\textbf{F}$. Further, $\textbf{F}$ is in fact a conditional vector space and the corresponding conditional function $\textbf{id}:\textbf{E}\rightarrow\textbf{F}$ is a conditional ismorphism of conditional vector spaces.

In addition, we have that $E$ is a stable $L^0$-module, and for every sequence $\{x_k\}\subset E$ and partition $\{a_k\}\in p(1)$ we have $\sum x_k|a_k=\sum 1_{a_k}x_k$ (for both structures of conditional sets $\textbf{E}$ and $\textbf{F}$).
\end{prop}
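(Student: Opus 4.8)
The plan is to isolate the one substantive fact: in a conditional vector space the conditioning is governed by the $L^0$-scalar multiplication. Precisely, I claim that for all $x,y\in E$ and all $a\in\A$ one has $x|a=y|a$ in $\textbf{E}$ if, and only if, $1_a x=1_a y$, where $+$ and $\cdot$ denote the $L^0$-module operations on $E$ induced by the conditional operations of $\textbf{E}$.

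To prove this \emph{Key claim} I would use two ingredients: that in $\textbf{R}$ (as constructed above) one has $1_a|a=1|a$ and $1_a|a^c=0|a^c$; and that for a conditional function $\textbf{g}:\textbf{X}\Join\textbf{Y}\to\textbf{Z}$ coming from a stable map $g:X\times Y\to Z$ the value $g(u,v)|c$ depends only on $u|c$ and $v|c$ (which is immediate from the stability of $g$ together with uniqueness of concatenations in $\textbf{X}\Join\textbf{Y}$), and likewise in one variable. Applying this to the conditional scalar multiplication $\boldsymbol{\cdot}:\textbf{R}\Join\textbf{E}\to\textbf{E}$ and the conditional addition: if $1_a x=1_a y$, put $z:=x-y$, so $1_a z=0$ and hence $z=1_{a^c}z$; since $1_{a^c}|a=0|a$ and $0\cdot z=0$ this gives $z|a=(1_{a^c}z)|a=(0\cdot z)|a=0|a$, and adding $y$ yields $x|a=(z+y)|a=(0+y)|a=y|a$. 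Conversely, if $x|a=y|a$ then $z:=x-y$ satisfies $z|a=0|a$; using $1_a|a=1|a$, $1_a|a^c=0|a^c$ and the module identities $1\cdot v=v$, $0\cdot v=0$ one gets $(1_a z)|a=(1\cdot 0)|a=0|a$ and $(1_a z)|a^c=(0\cdot z)|a^c=0|a^c$, so $1_a z=0$ by the stability axiom of $\textbf{E}$ applied to the partition $\{a,a^c\}$, i.e. $1_a x=1_a y$.

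Granting the Key claim, everything else is bookkeeping against the definitions. First, $E$ is a stable $L^0$-module: given $\{x_k\}\subset E$ and $\{a_k\}\in p(1)$, the stability axiom of $\textbf{E}$ produces a unique $x\in E$ with $x|a_k=x_k|a_k$ for all $k$, which by the Key claim is exactly the unique $x\in E$ with $1_{a_k}x=1_{a_k}x_k$ for all $k$; thus $x$ is the unique concatenation of $\{x_k\}$ along $\{a_k\}$ in the module sense, and $\sum x_k|a_k=\sum 1_{a_k}x_k$ in $\textbf{E}$. Second, $\textbf{F}$ is a conditional set of $E$: axiom 1 is built into the definition of $\sim$; axiom 2 follows from $1_a=1_a1_b$ when $a\le b$; and axiom 3 is precisely the stability of the $L^0$-module $E$ just obtained. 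By construction the conditioning of $\textbf{F}$ satisfies $x|a=y|a\Leftrightarrow 1_a x=1_a y$, so concatenations in $\textbf{F}$ are again $\sum 1_{a_k}x_k$; in particular they coincide with those in $\textbf{E}$, which at once gives the stability of $id:E\to E$ in each of the directions $\textbf{E}\to\textbf{F}$ and $\textbf{F}\to\textbf{E}$ and the last displayed identity in the statement.

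Finally, $id$ induces mutually inverse conditional functions between $\textbf{E}$ and $\textbf{F}$; by the Key claim $\textbf{id}:\textbf{E}\to\textbf{F}$ is conditionally injective, and it is trivially conditionally surjective, hence a conditional bijection. Transporting $\boldsymbol{+}$ and $\boldsymbol{\cdot}$ along $\textbf{id}$ turns $\textbf{F}$ into a conditional vector space for which $\textbf{id}$ is a conditional isomorphism, and since $id$ is the set-theoretic identity the $L^0$-module structure that $\textbf{F}$ induces on $E$ is the original one. The only genuine obstacle is the Key claim; all the remaining steps are routine checks against the recalled definitions of conditional set, conditional vector space and conditional (iso)morphism.
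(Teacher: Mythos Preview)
Your proposal is correct and takes essentially the same approach as the paper: both isolate the identical Key claim (the paper states it as $x|a=y|b \Leftrightarrow a=b$ and $1_a x=1_b y$) and derive everything else from it. The paper's proof of the Key claim is marginally more direct---it writes $1_a = 1|a + 0|a^c$ in $L^0$ and uses stability of the scalar product once to obtain $1_a x = x|a + 0|a^c$, from which both implications are immediate---whereas you pass through $z=x-y$ and treat the two directions separately; but the underlying idea (exploit the known conditioning on $\textbf{R}$ together with stability of $\boldsymbol{\cdot}$) is the same.
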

\begin{proof}

The conclusions of the statement follows easily by inspection by taking into account the following equivalence

\begin{equation}
\label{x0}
x|a=y|b\textnormal{ if, and only if, }a=b\textnormal{ and }1_a x= 1_b y.
\end{equation}

So, let us show (\ref{x0}). Indeed, we know that for any $a\in\A$, we have $1_a=1|a + 0|a^c$ in $L^0$. Then, by using that the scalar product $\cdot:L^0\times E\rightarrow E$ is stable, we obtain:
 
\begin{equation}
\label{x1}
1_a x=(1|a + 0|a^c)\cdot(x|a + x|a^c)=1\cdot x|a + 0\cdot x|a^c=x|a + 0|a^c.
\end{equation}

Similarly,

\begin{equation}
\label{x2}
1_b y= y|b+ 0|b^c.
\end{equation}

Let us suppose $x|a=y|b$. By 1 of Definition \ref{defn: condSet} we have $a=b$. Then, in view of (\ref{x1}) and (\ref{x2}), we obtain $1_a x= 1_b y$. 

Conversely, suppose $1_a x=1_b y$ and $a=b$. Again, by (\ref{x1}) and (\ref{x2}), we have that $x|a + 0|a^c=1_a x=1_a y=y|a + 0|a^c$. Since $a=b$, it follows $x|a=y|b$.
\end{proof}
 
In \cite[Definition 3.14]{key-38} it was introduced the notion of \textit{conditional seminorm}. In this setting where $\A$ is the measure algebra, we find that a seminorm $\nVert\cdot\nVert:\textbf{E}\rightarrow\textbf{R}^+$ defined on a conditional vector space $\textbf{E}$ is a conditional function such that the generating stable function $\Vert\cdot\Vert:E\rightarrow L^0_+$ is in fact a $L^0$-seminorm. 

Based on conditional set theory, for a given conditional vector space $\textbf{E}$ and a conditional family of conditional seminorms $\mathcal{P}$, we can endow $\textbf{E}$ with a conditional topology as follows:

Given a conditionally finite conditional subset $\textbf{F}\sqsubset \mathcal{P}$ on $1$. Let us fix $\textbf{r}\in\textbf{R}^{++}$. We define:
\[
\textbf{U}_{\textbf{F},\textbf{r}}:=\left[\textbf{x}\in\textbf{E}\:;\: \nVert\textbf{x}\nVert\leq \textbf{r},\:\forall \nVert\cdot \nVert\in\textbf{F} \right].
\]
We know from \cite{key-38} that the conditional set 
\[
\mathcal{U}:=\left[\textbf{x}+\textbf{U}_{\textbf{F},\textbf{r}}\:;\: \textbf{r}\in\textbf{R}^{++},\: \textbf{F}\sqsubset\mathcal{P}\textnormal{ conditionally finite} \right]
\]
is a conditional topological neighborhood base for a conditionally locally convex topology $\mathcal{T}$. %In fact, in \cite{key-38} it was shown that the conditional topology of a conditional topological vector space is conditionally locally convex if, and only if, it is induced by a family of conditional seminorms.

 For a  conditionally locally convex space $\textbf{E}[\mathcal{T}]$, let $\textbf{E}[\mathcal{T}]^*$ denote —or simply $\textbf{E}^*$—, the conditional vector space of conditionally linear and conditionally continuous functions $\textbf{f}:\textbf{E}\rightarrow\textbf{R}$. We define the conditional weak topology $\sigma(\textbf{E},\textbf{E}^*)$ on $\textbf{E}$ as the conditionally locally convex topology induced by the conditional family of conditional seminorms $\left[\textbf{p}_{\textbf{x}^*}\:;\:\textbf{x}^*\in \textbf{E}^* \right]$ defined by $\textbf{p}_{\textbf{x}^*}=|\textbf{x}^*(\textbf{x})|$ for $\textbf{x}\in\textbf{E}$. Analogously, the conditional weak-$*$ topology $\sigma(\textbf{E}^*,\textbf{E})$ on $\textbf{E}^*$ is defined.

We have below the main theorem of this section, which states in terms of equivalence of categories the connection between locally $L^0$-convex modules and conditionally locally convex spaces. The proof of this result rely on \cite[Proposition 3.5]{key-7}, where a connection between conditional topologies and classical topologies is provided.

\begin{thm}
\label{thm: conection}
Let $\A$ be the measure algebra associated to some probability space $(\Omega,\F,\PP)$; let $\mathcal{C}$ be the category whose objects are conditionally locally convex spaces with underlying measure algebra $\A$, and whose morphisms are conditionally continuous  linear applications; and let $\mathscr{C}$ be the category whose objects are locally stable $L^0(\F)$-convex modules, and whose morphisms are continuous $L^0(\F)$-linear functions.

Let $\phi:\mathcal{C}\rightarrow \mathscr{C}$ be the functor defined by 
\begin{enumerate}
	\item $\phi(\textbf{E}[\mathcal{T}])=E[\mathscr{T}]$, where $\textbf{E}[\mathcal{T}]$ is a conditionally locally convex space, $E$ is the generating set of $\textbf{E}$ and    
\[
\mathscr{T}:=\left\{O\in S(\textbf{E}) \:;\: \textbf{O}\in\mathcal{T}\right\};
\]
	\item and $\phi(\textbf{f})=f$ for every conditionally continuous  linear function $\textbf{f}:\textbf{E}\rightarrow\textbf{F}$, where $f:E\rightarrow F$ is the generating stable function of $\textbf{f}$. 
\end{enumerate}
Then, $\phi$ is well defined and is an equivalence of categories between $\mathcal{C}$ and $\mathscr{C}$.

Moreover, if $\mathscr{P}$ is a family of $L^0(\F)$-seminorms stably inducing $\mathscr{T}$, then the conditional family of conditional seminorms $\mathcal{P}:=\left[\nVert\cdot\nVert \:;\:\Vert\cdot\Vert\in \mathscr{P}\right]$ induces $\mathcal{T}$.

In addition, we have the following relation
\begin{equation}
\label{eq: weak}
\begin{array}{ccc}
\phi(\textbf{E}[\sigma(\textbf{E},\textbf{E}^*)])=E[\sigma(E,E^*)_{cc}] & \textnormal{ and } & \phi(\textbf{E}^*[\sigma(\textbf{E}^*,\textbf{E})])=E^*[\sigma(E^*,E)_{cc}]. %And, if $\textbf{E}$ is conditionally normed, then $\phi(\textbf{E}^*[\nVert\cdot\nVert])=E^*[\Vert\cdot\Vert]$.  
\end{array}
\end{equation}

\end{thm}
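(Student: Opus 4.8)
The plan is to establish the equivalence of categories by leveraging \cite[Proposition 3.5]{key-7}, which connects conditional topologies with classical ones, and then verifying that the correspondence respects the algebraic and topological structures on both sides. First I would check that $\phi$ is well defined on objects: given a conditionally locally convex space $\textbf{E}[\mathcal{T}]$, one must verify that $\mathscr{T}:=\{O\in S(\textbf{E})\:;\:\textbf{O}\in\mathcal{T}\}$ is a genuine topology on $E$ that is locally stable $L^0(\F)$-convex. The key point is that the stable open sets of a conditional topology, restricted to the generating set, form a neighborhood base satisfying conditions 1 and 2 of Theorem \ref{thm: caracterizacionII}: conditional convexity, absorbency, and balancedness of conditional neighborhoods translate directly (via Proposition \ref{lem:connex} and the identification $\sum x_k|a_k = \sum 1_{a_k} x_k$) into the corresponding $L^0$-notions, and stability of the conditional neighborhood base under conditional intersection and conditioning gives condition 2. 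By Theorem \ref{thm: caracterizacionII}, $E[\mathscr{T}]$ is then a locally stable $L^0(\F)$-convex module. Conversely, I would construct a pseudo-inverse functor $\psi:\mathscr{C}\rightarrow\mathcal{C}$ sending a locally stable $L^0(\F)$-convex module $E[\mathscr{T}]$, with $\mathscr{T}$ stably induced by a family $\mathscr{P}$ of $L^0(\F)$-seminorms, to the conditional vector space $\textbf{E}$ (built as in Proposition \ref{lem:connex}) equipped with the conditionally locally convex topology induced by $\mathcal{P}:=[\nVert\cdot\nVert\:;\:\Vert\cdot\Vert\in\mathscr{P}]$.

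Next I would check the correspondence on morphisms: a conditionally linear $\textbf{f}:\textbf{E}\rightarrow\textbf{F}$ has a generating stable function $f:E\rightarrow F$ which is automatically $L^0(\F)$-linear (since conditional linearity unwinds to $L^0$-linearity on the generating sets, again by stability of scalar multiplication), and conditional continuity of $\textbf{f}$ is equivalent to continuity of $f$ for the topologies $\mathscr{T}$, $\mathscr{T}'$; this last equivalence is exactly the content one reads off \cite[Proposition 3.5]{key-7}, since preimages of stable open sets under a stable function are stable, and $\textbf{f}^{-1}(\textbf{O})$ generates the same conditional set as $f^{-1}(O)$. Conversely every continuous $L^0(\F)$-linear $f$ between the underlying modules lifts to a conditionally continuous conditionally linear $\textbf{f}$. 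Functoriality (preservation of identities and composition) is immediate because $\phi$ acts by taking generating sets and generating functions, and composition of stable functions corresponds to composition of conditional functions. To conclude the equivalence, I would exhibit natural isomorphisms $\phi\circ\psi\cong \mathrm{id}_{\mathscr{C}}$ and $\psi\circ\phi\cong\mathrm{id}_{\mathcal{C}}$; in fact one expects these to be isomorphisms of functors realized by identity maps on generating sets, so the content is just that $\psi(\phi(\textbf{E}[\mathcal{T}]))$ has the same conditional topology as $\textbf{E}[\mathcal{T}]$ and symmetrically—which follows from Theorem \ref{thm: caracterizacionII} (the topology $\mathscr{T}$ determines and is determined by a family of $L^0$-seminorms stably inducing it) together with the fact, recorded in the \emph{Moreover} clause, that $\mathcal{P}$ induces $\mathcal{T}$ whenever $\mathscr{P}$ stably induces $\mathscr{T}$.

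For the \emph{Moreover} clause itself, I would argue that the conditional basic neighborhoods $\textbf{U}_{\textbf{F},\textbf{r}}$ associated to $\mathcal{P}$ have, as generating stable sets, exactly the sets $U_{\{F_k\},\{a_k\},\varepsilon}$ from the definition of the topology stably induced by $\mathscr{P}$: a conditionally finite conditional subset $\textbf{F}\sqsubset\mathcal{P}$ on $1$ is precisely (by the description of conditional finiteness and conditionally countable gluings) a countable concatenation $\sum 1_{a_k}F_k$ of finite subsets $F_k\subset\mathscr{P}$, and $\textbf{r}=\varepsilon|1$ with $\varepsilon\in L^0_{++}(\F)$; matching the defining inequality $\nVert\textbf{x}\nVert\leq\textbf{r}$ for all $\nVert\cdot\nVert\in\textbf{F}$ against $\sum 1_{a_k}\Vert x\Vert_{F_k}\leq\varepsilon$ gives the identification of neighborhood bases, hence the two topologies coincide under $\phi$. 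The relation (\ref{eq: weak}) is then a special case: I would show that the conditional dual $\textbf{E}^*$ has generating set equal to $E^*$ (a conditionally linear conditionally continuous $\textbf{f}:\textbf{E}\rightarrow\textbf{R}$ corresponds bijectively to a continuous $L^0(\F)$-linear $\mu:E\rightarrow L^0$, using Proposition \ref{prop: conj} for stability of $E^*$), and that the conditional family $[\textbf{p}_{\textbf{x}^*}\:;\:\textbf{x}^*\in\textbf{E}^*]$ is $[\nVert\cdot\nVert\:;\:\Vert\cdot\Vert\in\{q_{x^*}\}_{x^*\in E^*}]$; applying the \emph{Moreover} clause to this family yields $\phi(\textbf{E}[\sigma(\textbf{E},\textbf{E}^*)])=E[\sigma(E,E^*)_{cc}]$, and symmetrically for the weak-$*$ topology.

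The main obstacle I anticipate is the careful bookkeeping in the object part: one must be scrupulous that $\mathscr{T}=\{O\in S(\textbf{E})\:;\:\textbf{O}\in\mathcal{T}\}$ is closed under arbitrary unions and finite intersections \emph{as a topology on $E$}, not merely that its stable members behave well—here the subtlety is that a union of stable sets need not be stable, so one shows instead that $\mathscr{T}$ is generated (as a topology) by the stable conditionally open sets, and that this generated topology is exactly locally stable $L^0$-convex via the neighborhood-base characterization of Theorem \ref{thm: caracterizacionII}; conversely one must check that the conditional topology reconstructed from $\mathscr{T}$ recovers $\mathcal{T}$ on the nose, which is where \cite[Proposition 3.5]{key-7} does the heavy lifting by guaranteeing that a conditional topology is determined by its stable open sets. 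Everything else—morphisms, functoriality, the seminorm clause, and (\ref{eq: weak})—is then routine translation between the two languages.
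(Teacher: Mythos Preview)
Your proposal is correct and follows essentially the same strategy as the paper: both rely on \cite[Proposition 3.5]{key-7} to pass between conditional and classical topologies, on Theorem \ref{thm: caracterizacionII} to characterize the locally stable $L^0$-convex side, and on Proposition \ref{lem:connex} to handle the conditional vector space structure; the \emph{Moreover} clause and (\ref{eq: weak}) are treated the same way in both, by matching the neighborhood bases $U_{\{F_k\},\{a_k\},\varepsilon}$ with the conditional neighborhoods $\textbf{U}_{\textbf{F},\textbf{r}}$.

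There is one organizational difference worth flagging. You define the inverse functor $\psi$ by first choosing a family $\mathscr{P}$ stably inducing $\mathscr{T}$ and then declaring the conditional topology to be the one induced by $\mathcal{P}=[\nVert\cdot\nVert\:;\:\Vert\cdot\Vert\in\mathscr{P}]$; this makes $\psi$ a priori dependent on the choice of $\mathscr{P}$, and you then have to appeal to the \emph{Moreover} clause (proved separately) plus \cite[Proposition 3.5]{key-7} to see that the resulting conditional topology is independent of that choice. The paper instead defines $\psi$ intrinsically by $\mathcal{T}:=[\textbf{O}\:;\:O\in\mathscr{T}]$ and uses \cite[Proposition 3.5]{key-7} directly to verify that this is a conditional topology with the right neighborhood base; this avoids the well-definedness check and gives $\phi\psi=\mathrm{Id}_{\mathscr{C}}$ on the nose rather than up to isomorphism. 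Your route works but carries a small bookkeeping burden; the paper's is slightly cleaner for exactly the reason you yourself anticipate in your last paragraph.
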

\begin{proof}
First, let us show that $\phi$ is well defined. Indeed, if $\textbf{E}[\mathcal{T}]$ is a conditionally locally convex space, by  Proposition \ref{lem:connex}, we know that $E$ is a stable $L^0$-module. Besides, we know from \cite{key-38} that a conditionally locally convex topology is always induced by a family of conditional seminorms. Therefore, there exists a conditional topological neighborhood base $\mathcal{U}$ of $\textbf{0}\in\textbf{E}$ such that every $\textbf{U}\in\mathcal{U}$ is conditionally convex, conditionally absorbent and conditionally balanced. In view of \cite[Proposition 3.5]{key-7}, the stable collection $\mathscr{U}:=\{U\in S(\textbf{E})\:;\: \textbf{U}\in\mathcal{U}\}$ is a neighborhood base of the topology $\mathscr{T}=\left\{O\in S(\textbf{E}) \:;\: \textbf{O}\in\mathcal{T}\right\}$. Further, every $U\in\mathscr{U}$ is stable, $L^0$-convex, $L^0$-absorbent and $L^0$-balanced, because $\textbf{U}$ is conditionally convex, conditionally absorbent, and conditionally balanced. We conclude that $E[\mathscr{T}]$ is a locally stable $L^0$-convex module.

Further, for a given  conditionally linear continuous function $\textbf{f}:\textbf{E}\rightarrow\textbf{F}$ between conditionally locally convex spaces, the stable function $f: E\rightarrow F$ is clearly $L^0$-linear. Let us show that $f$ is continuous. It suffices to show that $f$ is continuous at $0\in E$. Indeed, let $V\in S(\textbf{F})$ be an open neighborhood of $0\in F$. Since $\textbf{f}$ is conditionally continuous there exists a conditionally open set $\textbf{O}\sqsubset \textbf{E}$ on $1$ with $\textbf{0}\in\textbf{O}$ such that the conditional image $\textbf{f}(\textbf{O})\sqsubset \textbf{V}$ and, consequently, $f(O)\subset V$.  

Now, let us show that $\phi$ is an equivalence of categories. Indeed, let us consider the following functor $\psi:\mathscr{C}\rightarrow\mathcal{C}$, which we describe as follows: First, given a locally stable $L^0$-convex module $E[\mathscr{T}]$, as in Proposition \ref{lem:connex}, we can define an equivalence relation on $E\times\A$ where the equivalence class of $(x,a)$ is given by $x|a:=\left\{(y,b)\in E\times\A\:;\:a=b,1_a x=1_b y\right\}$. Due to the stability of $E$, we obtain that the quotient $\textbf{E}$ is a conditional vector space. Let us put $\psi(E[\mathscr{T}]):=\textbf{E}[\mathcal{T}]$, with $\mathcal{T}:=[\textbf{O}\:;\:O\in\mathscr{T}]$. Now, let $\mathscr{U}$ be a neighborhood base of $0\in E$ as in Theorem \ref{thm: caracterizacionII}, which is a stable collection of stable subsets of $E$. Then, \cite[Proposition 3.5]{key-7} yields  that $\mathcal{T}$ is a conditional topology and $\mathcal{U}=\left[\textbf{U}\:;\:U\in \mathscr{U}\right]$ is a conditional neighborhood base of $\textbf{0}\in\textbf{E}$. Since every $U\in \mathscr{U}$ is $L^0$-convex, we have that each $\textbf{U}\in\mathcal{U}$ is conditionally convex. We conclude that $\textbf{E}[\mathcal{T}]$ is a conditionally locally convex space. Second, for each continuous $L^0$-linear function $f:E\rightarrow F$ between locally stable $L^0$-convex modules, we know that $f$ is stable, and  it therefore generates a conditional function $\textbf{f}:\textbf{E}\rightarrow\textbf{F}$ between conditionally locally convex vector spaces. Thus, we define $\psi(f):=\textbf{f}$. It is easy to show that $\textbf{f}$ is conditionally linear and conditionally continuous.

Let us show that the product $\phi\psi$ is naturally equivalent to the identity functor $Id_{\mathscr{C}}$, and  $\psi\phi$ to the identity functor $Id_{\mathcal{C}}$. Indeed, first it is clear that $\phi\psi=Id_{\mathscr{C}}$. Second, regarding $\psi\phi$, it is easy to show that the conditional isomorphism provided by Proposition \ref{lem:connex} defines a natural isomorphism between the functors $\psi\phi$ and $Id_{\mathcal{C}}$. Therefore, we conclude that $\mathscr{C}$ and $\mathcal{C}$ are equivalent categories.

Now, let us turn to show that if $\mathscr{P}$ is a family of $L^0$-seminorms stably inducing $\mathscr{T}$, then the conditional family of conditional seminorms $\mathcal{P}:=\left[\nVert\cdot\nVert \:;\:\Vert\cdot\Vert\in \mathscr{P}\right]$ induces $\mathcal{T}$. Indeed, for given $\{a_k\}\in p(1)$  and a sequence $\{\Vert\cdot\Vert_k\}_{k\in\N}$ in $\mathscr{P}$, let us define 
\[
\begin{array}{cc}
\Vert x\Vert_{\{\Vert\cdot\Vert_k\},\{a_k\}}:=\sum 1_{a_k}\left\Vert x\right\Vert_k, & \textnormal{ for }x\in E. 
\end{array}
\]
Inspection shows that $\Vert \cdot\Vert_{\{\Vert\cdot\Vert_k\},\{a_k\}}:E\rightarrow L^0_+$ is a $L^0$-seminorm. The collection of $L^0$-seminorms  $\Vert\cdot\Vert_{\{\Vert\cdot\Vert_k\},\{a_k\}}$, with $\{a_k\}\in p(1)$ and $\{\Vert\cdot\Vert_k\}\subset\mathscr{P}$,  is a stable family of $L^0$-seminorms, which defines a conditional family of conditional seminorms $\mathcal{P}$. Then, we claim that
\begin{equation}
\label{eq: seminorms}
\begin{array}{c}
\left\{ U_{\{F_k\},\{a_k\},r}  \:;\: r\in L_{++}^{0},\: \{F_k\}\subset \mathscr{P} \textnormal{ finite for all }k\in\N,\{a_k\}\in p(1) \right\}=\\
=\left\{U_{F,r}  \:;\: \textbf{F}\sqsubset\mathcal{P}\textnormal{ conditionally finite, }\textbf{r}>\textbf{0}\right\}.
\end{array}
\end{equation}
If we prove the above equality, from \cite[Proposition 3.5]{key-7},  it follows that $\mathcal{T}$ is the conditional topology induced by $\mathcal{P}$. Indeed, on one hand, let us take $\{a_k\}\in p(1)$,  a family $\{F_k\}$ of non-empty finite subsets of $\mathscr{P}$ and $r\in L_{++}^{0}$. For each $k\in\N$, let  $F_k=\{\Vert\cdot\Vert_1^k,...,\Vert\cdot\Vert_{n_k}^k\}$. Now, let us take $n:=\sum 1_{a_k} n_k\in L^0(\N)$ with $\{a_k\}\in p(1)$ and $\{n_k\}\subset\N$, which defines the conditional natural number $\textbf{n}=n|1\in\textbf{N}$. For each conditional natural number $\textbf{m}\leq \textbf{n}$ we have that $m=\sum_k 1_{a_k}\sum_{1\leq i\leq n_k} 1_{b_{i,k}}i\in L^0(\N)$ with $\{b_{i,k}\}_i\in p(a_k)$ for each $k\in\N$. We define $\Vert\cdot\Vert_m:=\Vert\cdot\Vert_{\{\Vert\cdot\Vert_i^k\},\{a_k\wedge b_{i,k}\}}$, which defines a conditional seminorm $\nVert\cdot\nVert_\textbf{m}\in\mathcal{P}$. Now, let us consider the conditionally finite subset $\textbf{F}:=\left[\nVert\cdot\nVert_\textbf{m}\:;\: \textbf{m}\leq \textbf{n}\right]$. Then it can be checked that ${U}_{F,r}=U_{\{F_k\},\{a_k\},r}$. 

On the other hand, let us take  $\textbf{F}\sqsubset \mathcal{P}$ on $1$, which is conditionally finite, and $\textbf{r}\in\textbf{R}^{++}$.  Since $\textbf{F}$ is conditionally finite, it is of the form $\textbf{F}=\left[\nVert\cdot\nVert_\textbf{m}\:;\:   \textbf{m}\leq \textbf{n}\right]$ for some $n=\sum 1_{a_k}n_k$ with $n_k\in\N$ and $\nVert\cdot\nVert_\textbf{m}$ in ${\mathcal{P}}$. For each $k\in\N$, let us define the finite set $F_k:=\{\Vert\cdot\Vert_m\:;\: m\in\N, m\leq n_k \}$. Then, inspection shows that $U_{F,r}=U_{\{F_k\},\{a_k\},r}$. Thereby, we obtain (\ref{eq: seminorms}).

%Finally, let us show the equality (\ref{eq: weak}).
%
%Let $\textbf{E}[\mathcal{T}]$ be a conditional locally convex space.  Proposition  \ref{prop: conj} yields that $E^*$ is stable with uniqueness, then, in same way as in the first part, it defines a conditional linear space $\textbf{F}$. Let us show that $\textbf{F}=\textbf{E}^*$.

%Indeed, for given $\textbf{f}:\textbf{E}\rightarrow\textbf{R}$ a conditionally linear and continuous function, the stable function $f: E\rightarrow L^0$ is clearly $L^0$-linear. Let us show that $f$ is continuous. It suffices to show that $f$ is continuous at $0\in E$. Indeed, let $r\in L^0_{++}$ be, since $\textbf{f}$ is continuous there exists $\textbf{O}\in\mathcal{T}$ such that the conditional image $\textbf{f}(\textbf{O})\subset \textbf{B}_\textbf{r}$ and, consequently, $f(O)\subset B_r$.  
%
%If $f:E\rightarrow L^0$ a $L^0$-linear continuous function, then $\textbf{f}:\textbf{E}\rightarrow\textbf{R}$ is conditionally linear and continuous.

Finally, if in particular we consider the families of $L^0$-seminorms $\sigma(E,E^*)$ and $\sigma(E^*,E)$, we obtain (\ref{eq: weak}) from the following equalities
 
\[
\begin{array}{ccc}
\sigma(\textbf{E},\textbf{E}^*)=[\nVert\cdot\nVert \:;\: \Vert\cdot\Vert\in \sigma(E,E^*)] & \textnormal{ and } &
\sigma(\textbf{E}^*,\textbf{E})=[\nVert\cdot\nVert \:;\: \Vert\cdot\Vert\in \sigma(E^*,E)].
\end{array}
\]
\end{proof}

Now, Theorem \ref{thm: conection} allows us to deal with locally $L^0$-convex modules by using the machinery of conditional set theory, and, conversely, to draw from $L^0$-theory some existing important results  (for example \cite{key-10,key-5,key-6,key-13}) to the conditional setting when the underlying boolean algebra is a measure algebra. For instance, we have a Fenchel-Morau theorem (see \cite[Theorem 3.8]{key-10}) which can be transcribed by using the language of conditional sets. Also, Theorem \ref{thm: conection} provides a method to indentify the conditional topological dual $\textbf{E}^*$ of a conditionally locally convex space $\textbf{E}$ when one know the topological dual $E^*$ of the $L^0$-module $E$.

%As a consequence of the theorem above, we see that  

%Now, Theorem \ref{thm: conection} allows to draw setting results from the $L^0$-theory  (for example \cite{key-10,key-5,key-6,key-13}) and vice versa.  

%For instance, from Theorem ... of ..., we have the Fenchel-Moreau type theorem for conditionally locally convex ve:
%
%\begin{thm}
%
%\end{thm}

We have the following example:

\begin{example}
\label{exam: Lp}

For $1\leq p\leq\infty$, let us consider the locally $L^0$-convex module $L^p_\F(\EE)$. We can define the conditional normed space $\textbf{L}^p_\F(\EE):=\psi(L^p_\F(\EE))$, where $x|a:=\{(y,b); 1_a x = 1_b y \textnormal{ and }a=b\}$. Also, the $L^0$-norm $\Vert\cdot\Vert_p$ defines a conditional norm $\nVert\cdot\nVert_p:\textbf{L}^p_\F(\EE)\rightarrow\textbf{R}^+$, which induces a conditional topology.

For $1\leq p<+\infty$, if $1<q\leq+\infty$ with $1/p+1/q=1$, let us consider the $L^0$-isometric isomorphism $T:L^p_\F(\EE)\rightarrow L^q_\F(\EE)$, $z\mapsto T_z$.  Then, we can consider the conditional isometric isomorphism $\psi(T)=\textbf{T}$ between conditional normed spaces, which, in view of Theorem \ref{thm: conection}, are precisely $(\textbf{L}^p_\F(\EE))^*$ and $\textbf{L}^q_\F(\EE)$.     
\end{example}

Finally, we would like to highlight that Examples  \ref{ex3}, \ref{ex4}, \cite[Example 2.12]{key-10}, \cite[Example 1.1]{key-15}, and  \cite[Example 2.7]{key-17} provide $L^0$-modules which, lacking stability in either the algebraic or the topological structure, fall outside the scope of conditional set theory in view of Theorem \ref{thm: conection}. Therefore it is open up to find an analytic approach to these 'sick' $L^0$-modules, which might eventually be considered as model space for the financial applications.  

Let us see that there is some hope for these boundary examples. Given any locally $L^0$-convex module $E[\mathscr{T}]$, we can construct from it a locally stable $L^0$-convex module $\bar{E}[\bar{\mathscr{T}}]$, for which Theorem \ref{thm: conection} applies. Indeed, in a  first step, in order to guarantee the uniqueness of concatenations, let us define the following equivalence relation $\sim$ on $E$:
\[
\begin{array}{ccc}
x\sim y & 	\textnormal{whenever} & 1_{a_k}x=1_{a_k}y \textnormal{ for some }\{a_k\}\in p(1).
\end{array}
\]
Now, let us consider the quotient set $\tilde{E}$. Let us denote by $\overline{x}$ the class of $x$. It easy to check that the operations $\overline{x}+\overline{y}:=\overline{x+y}$ and  $\lambda\overline{x}=\overline{\lambda x}$ are well defined and $\tilde{E}$, endowed with this operations, is a $L^0$-module.

In a second step, in order to stabilize the algebraic structure, let us consider the set
\[
\left\{(x_k,a_k)\:;\: \{a_k\}\in p(1),\:\{x_k\}\subset \tilde{E} \right\},
\]
and the equivalence relation 
\[
\begin{array}{ccc}
(x_i,a_i)\sim(y_j,b_j) & \textnormal{whenever} & 1_{a_i\wedge b_j}x_i=1_{a_i\wedge b_j}y_j \textnormal{ for all }i,j\in\N.
\end{array}
\]

We consider the quotient set $\bar{E}$. Let us denote by $[x_k,a_k]$ the class of $(x_k,a_k)$. We define the operations
\[
\begin{array}{ccc}
[x_i,a_i]+[y_j,b_j]=[x_i+y_j,a_i\wedge b_j], & \textnormal{and} & \lambda[x_i,a_i]=[\lambda x_i,a_i]. 
\end{array}
\]

They are well defined and inspection shows that $\bar{E}$ is a stable $L^0$-module, where 
\[
\sum_{k\in\N} 1_{a_k}[x^k_n,b^k_n]=[x^k_n,a_k\wedge b_n^k].
\]

The third step is to stabilize the topology $\mathscr{T}$. For that, let us consider the application
\[
\begin{array}{cc}
\pi:E\rightarrow \bar{E}, & x\mapsto [\overline{x},1].
\end{array}
\]

Let us take some neighborhood base $\mathscr{U}$ of $0\in E$, such that each $U\in\mathscr{U}$ is $L^0$-convex, $L^0$-absorbent and $L^0$-balanced. Then, let
\[
\bar{\mathscr{U}}:=\left\{ \sum_{k\in\N}1_{a_k}\pi(U_k)\:;\: U_k\in\mathscr{U}\textnormal{ for all }k,\:\{a_k\}\in p(1) \right\}.
\] 

We have that $\bar{\mathscr{U}}$ is a topological basis of some topology $\bar{\mathscr{T}}$. Moreover, $\bar{E}[\bar{\mathscr{T}}]$ is a  locally stable $L^0$-convex, which will be referred to as the \textit{stable closure} of $E[\mathscr{T}]$, and $\pi:E[\mathscr{T}]\rightarrow\bar{E}[\bar{\mathscr{T}}]$ is $L^0$-linear and open; injective whenever there is uniqueness in concatenations; and bijective whenever $E$ is stable.

%\begin{prop}
%Let $E[\mathscr{T}]$ be a locally $L^0$-convex module, then there exists a stable locally $L^0$-convex modules with stable topology $\overline{E}[\overline\mathscr{T}]$ such that 
%\end{prop} 

\section{Conditional version of James Theorem}   

%In this section we shall make use of the language of conditional sets. As was explained in the introduction, the idea is not to give a review of the extensive conditional sets theory. Therefore, basic notions from this framework are supposed to be known by the reader. 

The aim of this section is to prove a conditional  James' compactness theorem in the non linear setting discussed in \cite{key-22}. In pursuing this goal, we first need some preliminary results.  

Let us first recall the notion of conditional sequence, which was introduced in \cite{key-7}. Namely, a \textit{conditional sequence} in a conditional set $\textbf{E}$ is a conditional family $\{\textbf{x}_\textbf{n}\}_{\textbf{n}\in\textbf{N}}$ of elements of $\textbf{E}$. If $\{\textbf{n}_\textbf{k}\}_{\textbf{k}\in\textbf{N}}$ is a conditional sequence in $\textbf{N}$ such that $\textbf{k}<\textbf{k}'$ implies that $\textbf{n}_\textbf{k}<\textbf{n}_\textbf{k}'$, then $\{\textbf{x}_{\textbf{n}_\textbf{k}}\}_{\textbf{k}\in\textbf{N}}$ is another conditional sequence which is said to be a \textit{conditional subsequence} of $\{\textbf{x}_\textbf{n}\}$. It is not difficult to verify that $\{\textbf{n}_\textbf{k}\}$ is conditionally cofinal in the sense that for any $\textbf{n}\in\textbf{\textbf{N}}$ there is another $\textbf{k}\in\textbf{N}$ such that $\textbf{n}_\textbf{k}\geq\textbf{n}$.

An important remark is that, for a given traditional sequence $\{x_n\}_{n\in\N}$ in $E$, we can construct a conditional sequence as follows: for any conditional natural number $\textbf{n}$ with $n:=\sum n_k|a_k$, $n_k\in\N$ and $\{a_k\}\in p(1)$, we can define a stable function from $L^0(\N)$ to $E$ given by $x_n:=\sum x_{n_k}|a_k$ for $n\in L^0(\N)$. Then, the associated conditional function defines a conditional sequence $\{\textbf{x}_\textbf{n}\}$ in $\textbf{E}$. Moreover, if $\{x_{n_k}\}_{k\in\N}$ is a subsequence of $\{x_n\}$ and we apply the method described above for constructing a conditional sequence $\{\textbf{x}_{\textbf{n}_\textbf{k}}\}_{\textbf{k}\in\textbf{N}}$, we obtain a conditional subsequence of $\{\textbf{x}_\textbf{n}\}$. 

Let $\{\textbf{x}_\textbf{n}\}$ be a conditional sequence  in $\textbf{R}$. We define $\underset{\textbf{n}}\nlimsup \textbf{x}_\textbf{n}=\underset{\textbf{m}}\ninf\:\underset{\textbf{n}\geq\textbf{m}}\nsup \textbf{x}_\textbf{n}$ and $\underset{\textbf{n}}\nliminf \textbf{x}_\textbf{n}=\underset{\textbf{m}}\nsup\:\underset{\textbf{n}\geq\textbf{m}}\ninf \textbf{x}_\textbf{n}$. So, it can be checked that there exists $\nlim_\textbf{n} \textbf{x}_\textbf{n}=\textbf{x}$ (with the conditional topology of $\textbf{R}$ introduced in (iii) of \cite[Definition 4.3]{key-7}) if, and only if, $\underset{\textbf{n}}\nlimsup \textbf{x}_\textbf{n}=\textbf{x}=\underset{\textbf{n}}\nliminf \textbf{x}_\textbf{n}$ and if, and only if, the sequence $\{x_n\}_{n\in\N}$ converges almost surely to $x$ in $L^0(\F)$. 

Let $\textbf{E}$ be a conditional vector space and two conditionally finite families $[\textbf{x}_\textbf{n}\:;\: \textbf{n}\leq \textbf{m}]\sqsubset\textbf{E}$ and $[\textbf{r}_\textbf{n}\:;\: \textbf{n}\leq \textbf{m}]\sqsubset\textbf{R}$. For some conditional natural number $\textbf{m}$ with $m=\sum_{k\in\N} m_k|a_k$ we denote by
%\[
%\sum_{1\leq\textbf{n}\leq \textbf{m}} \textbf{r}_\textbf{n}\textbf{x}_\textbf{n} \textnormal{ the conditional real number generated by }
%\sum_{i\in I}\left(\sum_{1\leq n \leq m_i} r_n x_n\right)|a_i.
%\]
\[
\sum_{1\leq\textbf{n}\leq \textbf{m}} \textbf{r}_\textbf{n}\textbf{x}_\textbf{n}
\]
 the conditional real number generated by $\sum_{k\in\N}\left(\sum_{1\leq n \leq m_k} r_k x_k\right)|a_k.$

Given a conditional sequence $\{\textbf{x}_\textbf{n}\}$, we have that the partial sums $\textbf{s}_\textbf{m} := \sum_{1\leq \textbf{n} \leq \textbf{m}}  \textbf{x}_\textbf{n}$ define a conditional sequence $\{\textbf{s}_\textbf{m}\}$. Then, we understand an infinite sum of the conditional sequence $\{\textbf{x}_\textbf{n}\}$ as the following conditional limit 
\[
\sum_{\textbf{n}\geq \textbf{1}} \textbf{x}_\textbf{n} := \nlim_{\textbf{m}} \textbf{s}_\textbf{m}. 
\]  

Given a conditional sequence $\{\textbf{f}_\textbf{n}\}$ of conditional functions $\textbf{f}_\textbf{n}:\textbf{E}\rightarrow\textbf{R}$ defined on a conditional set $\textbf{E}$, such that  for each  $\textbf{x}$ in $\textbf{E}$, it holds that $\underset{\textbf{n}\in\textbf{N}}\nsup|\textbf{f}_\textbf{n}(\textbf{x})|\in \textbf{R}$. We define
\[
\nco_{\sigma,\textbf{R}}[\textbf{f}_\textbf{n}\:;\:\textbf{n}\geq \textbf{1}]:=\left[ \sum_{\textbf{n}\geq \textbf{1}} \textbf{r}_\textbf{n} \textbf{f}_\textbf{n} \:;\: \{\textbf{r}_\textbf{n}\}\subset\textbf{R}^+,\:\sum_{\textbf{n}\geq \textbf{1}} \textbf{r}_\textbf{n}=\textbf{1}  \right].
\]

Notice that, due to the conditional boundedness of $\textbf{f}_\textbf{n}(\textbf{x})$, we have  $\sum_{\textbf{n}\geq 1} \textbf{r}_\textbf{n} \textbf{f}_\textbf{n}(\textbf{x})\in\textbf{R}$ for all $\textbf{x}$ in $\textbf{E}$. 

%Notice that, since $\textbf{f}_\textbf{n}$ are conditional, we can define ,   this conditional set it well defined, as 

%Hereafter, any sum $\sum_{\textbf{n} 1}^0...$ is understood to be $0$. Further, g

Given a conditional function $\textbf{f}:\textbf{C}\rightarrow\textbf{R}$, we denote the conditional supremum of $\textbf{f}$ on $\textbf{C}$ by $\nsup_\textbf{C}\textbf{f}:=\nsup[\textbf{f}(\textbf{x})\:;\: \textbf{x}\in\textbf{C}]$. 

A key piece of the conditional version of James' theorem will be the following result, which is a generalization
of the sup-limsup theorem of Simons \cite[Theorem 3]{key-29}. The proof of this result is an adaptation to a conditional setting of the proof of \cite[Corollary 10.6]{key-23} (which also relies on Lemma 10.4 and Theorem 10.5 of \cite{key-23}). For saving space, since this adaptation does not have any surprising element, we have omitted the proof.

%We will use the following notation: given a conditional function $\textbf{f}:\textbf{C}\rightarrow\textbf{R}$, we denote the conditional supremum of $\textbf{f}$ on $\textbf{C}$ by $\textbf{S}_\textbf{C}(\textbf{f}):=\nsup[\textbf{f}(\textbf{x})\:;\:\textbf{x}\in\textbf{C}]$.

\begin{thm}\label{thm: Simons' sup-limsup}
[Conditional version of Simons' sup-limsup theorem]
Let $\textbf{E}$ be a conditional set, and let $\left\{\textbf{f}_\textbf{n}\right\}_{\textbf{n}\in\textbf{N}}$ be a conditional sequence of conditional functions $\textbf{f}_\textbf{n}:\textbf{E}\rightarrow\textbf{R}$ such that for each $\textbf{x}\in\textbf{E}$ it holds $\nsup [|\textbf{f}_\textbf{n}(\textbf{x})|\:;\: \textbf{n}\in\textbf{N}]\in\textbf{R}$. Suppose that $\textbf{C}$ is a conditional subset of $\textbf{E}$ such that 
for every conditional function $\textbf{g}\in{\nco_{\sigma,\textbf{R}}}\left[\textbf{f}_\textbf{n} \:;\: \textbf{n}\geq \textbf{1} \right]$ there exists $\textbf{z}\in\textbf{C}$ with $\textbf{g}(\textbf{z})=\nsup_\textbf{E}\textbf{g}$. Then,
\[
\nsup_\textbf{E} \left(\nlimsup_\textbf{n} \textbf{f}_\textbf{n}\right)= \nsup_\textbf{C}\left(\nlimsup_\textbf{n} \textbf{f}_\textbf{n}\right).
\]
\end{thm}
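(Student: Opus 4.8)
The plan is to transfer the statement to the classical (unconditional) setting via the functor machinery of Section 1 and the correspondence between conditional objects and stable $L^0$-objects, and there invoke the already-existing classical sup-limsup theorem. More precisely, I would first unfold the conditional data: the conditional set $\textbf{E}$ has a generating set $E$, and the conditional sequence $\{\textbf{f}_\textbf{n}\}$ is the conditional function attached to a stable function $n\mapsto f_n$ from $L^0(\N)$ to the set of stable functions $E\to L^0$. The hypothesis $\nsup[|\textbf{f}_\textbf{n}(\textbf{x})|\:;\:\textbf{n}\in\textbf{N}]\in\textbf{R}$ translates, by the characterization of conditional suprema in $\textbf{R}$ recalled before Theorem \ref{thm: Simons' sup-limsup}, into: for each $x\in E$, $\esssup_{n}|f_n(x)|\in L^0$ (i.e. is finite a.s.). Similarly $\textbf{C}$ corresponds to a stable subset $C\subset E$, and $\nco_{\sigma,\textbf{R}}[\textbf{f}_\textbf{n};\textbf{n}\geq\textbf{1}]$ unfolds to the set of $L^0$-convex combinations $\sum_{n\in L^0(\N)}\eta_n f_n$ with $\eta_n\in L^0_+$, $\sum\eta_n=1$ (where the infinite sum is the $L^0$-limit of partial sums, which exists by the boundedness hypothesis).

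Next I would observe that the conclusion $\nsup_\textbf{E}(\nlimsup_\textbf{n}\textbf{f}_\textbf{n})=\nsup_\textbf{C}(\nlimsup_\textbf{n}\textbf{f}_\textbf{n})$ is, again under the dictionary for $\nlimsup$ and $\nsup$ over stable sets, literally the equality in $L^0$
\[
\esssup_{x\in E}\left(\esslimsup_n f_n(x)\right)=\esssup_{x\in C}\left(\esslimsup_n f_n(x)\right),
\]
and the hypothesis becomes: for every $L^0$-convex combination $g=\sum\eta_n f_n$ there is $z\in C$ with $g(z)=\esssup_{x\in E}g(x)$. So the whole statement is an $L^0$-valued, randomized version of Simons' sup-limsup theorem. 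To prove it I would follow the route the paper already flags — adapting \cite[Lemma 10.4, Theorem 10.5, Corollary 10.6]{key-23} — but the cleanest organization is a local-to-global (gluing) argument: since everything in sight is stable (the function $n\mapsto f_n$, the set $C$, the $L^0$-convex combinations), it suffices to prove the scalar inequality $\esssup_E(\esslimsup f_n)\leq \esssup_C(\esslimsup f_n)+\varepsilon$ for each deterministic $\varepsilon>0$, and for this one can work "pointwise on a partition." Concretely, one runs the classical Simons iteration — constructing inductively a sequence of $L^0$-convex combinations and exhausting the supremum — but with all scalars replaced by elements of $L^0$ and all suprema by $\esssup$; the attainment hypothesis supplies, at each stage, a point $z\in C$ realizing the supremum of the current $L^0$-convex combination, and stability of $C$ guarantees that the countably many choices glue into a single well-defined element (or that the relevant $\esssup$ over $C$ is attained up to $\varepsilon$ in the appropriate sense).

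The step I expect to be the main obstacle is controlling the infinite $L^0$-convex combinations and the interchange of $\esssup$ with $\esslimsup$ in the randomized setting: in Simons' classical proof one freely passes to subsequences and takes countable convex combinations with carefully chosen decaying coefficients, and here each "coefficient choice" must be made measurably and simultaneously over $\Omega$, while the $L^0$-limit defining $\sum\eta_n f_n$ must be shown to exist and to lie in the convex-combination set $\nco_{\sigma,\textbf{R}}[\textbf{f}_\textbf{n};\textbf{n}\geq\textbf{1}]$ at every stage. Granting the boundedness hypothesis $\esssup_n|f_n(x)|\in L^0$, these $L^0$-series converge (the partial sums are dominated and the tails vanish in $L^0$), so the obstruction is essentially bookkeeping rather than a genuinely new difficulty — which is precisely why, as the paper states, the adaptation "does not have any surprising element." Once the scalar-$L^0$ inequality is established, the reverse inequality $\nsup_\textbf{C}\leq\nsup_\textbf{E}$ is immediate from $\textbf{C}\sqsubset\textbf{E}$, and re-encoding the $L^0$-identity as the conditional identity via the dictionary of Theorem \ref{thm: conection} completes the proof.
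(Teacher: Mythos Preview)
Your proposal is correct in substance and coincides with the paper's approach: the paper's proof (omitted there) is precisely the adaptation of Lemma 10.4, Theorem 10.5, and Corollary 10.6 of \cite{key-23} to the conditional setting, i.e., running the Simons iteration with $L^0$-valued coefficients and conditional suprema, which is exactly what you describe after the word ``Concretely''. Note, however, that your opening framing is slightly off: there is no black-box reduction to the classical real-valued theorem (the ``transfer'' is only a change of notation from conditional to $L^0$), Theorem \ref{thm: conection} concerns locally convex spaces rather than arbitrary conditional sets (the relevant dictionary is the more elementary $\nsup\leftrightarrow\esssup$, $\nlimsup\leftrightarrow\esslimsup$ correspondence recalled just before the statement), and ``working pointwise on a partition'' does not reduce the problem to a scalar one --- as you yourself acknowledge, one must carry out the full $L^0$-valued iteration throughout.
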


Let see some necessary notions:

\begin{itemize}
	\item \cite{key-7} A conditional sequence $\{\textbf{x}_\textbf{n}\}$ in a conditional normed space $(\textbf{E},\nVert\cdot\nVert)$ is said to be  Cauchy, if for every $\textbf{r}\in\textbf{R}^{++}$ there exists $\textbf{n}_\textbf{r}\in\textbf{N}$ such that $\nVert \textbf{x}_\textbf{p} - \textbf{x}_\textbf{q}\nVert\leq \textbf{r}$ for all $\textbf{p},\textbf{q}\geq\textbf{n}_\textbf{r}$. 
	\item \cite{key-7} A conditional normed space is said to be Banach, if every conditional Cauchy sequence converges.
\item Let $(\textbf{E},\nVert\cdot\nVert)$ be a conditional Banach space, and let $\{\textbf{x}^*_\textbf{n}\}$ be  a conditional sequence in $\textbf{E}^*$. We define $\textbf{L}\{\textbf{x}_\textbf{n}^*\}$ the conditional set of conditional cluster points of $\{\textbf{x}_\textbf{n}^*\}$ in the conditional topology $\sigma(\textbf{E}^*,\textbf{E})$, i.e. $\textbf{x}^*\in\textbf{L}\{\textbf{x}_\textbf{n}^*\}$ if, and only if, for every conditional neighborhood $\textbf{U}$ of $\textbf{x}^*$ on $1$ and every $\textbf{n}\in\textbf{N}$ there is $\textbf{x}_\textbf{m}^*\in\textbf{U}$ with $\textbf{m}\geq\textbf{n}$. 
\item Let $\textbf{E}$ be a conditional vector space, and let $\{\textbf{x}_\textbf{n}\}$ and $\{\textbf{y}_\textbf{n}\}$ be conditional sequences in $\textbf{E}$. Then, $\{\textbf{y}_\textbf{n}\}$ is said to be a \textit{conditional convex block sequence} of $\{\textbf{x}_\textbf{n}\}$, if there exists a sequence of conditional natural numbers $\textbf{1}=\textbf{n}_1<\textbf{n}_2<...$ and a conditional sequence $\{\textbf{r}_\textbf{n}\}$ of conditional real number with $\textbf{0}\leq\textbf{r}_\textbf{n}\leq \textbf{1}$ for all $\textbf{n}\in\textbf{N}$, in such a way that
\begin{equation}
\begin{array}{cccc}
\sum_{\textbf{n}_k\leq\textbf{i}<\textbf{n}_{k+1}}\textbf{r}_\textbf{i}=\textbf{1} & \textnormal{ and } & \sum_{\textbf{n}_k\leq\textbf{i}<\textbf{n}_{k+1}}\textbf{r}_\textbf{i} \textbf{x}_\textbf{i}=\textbf{y}_\textbf{k} &\textnormal{ for each }k\in\N.
\end{array}
\end{equation}

For a conditional Banach space $\textbf{E}$, its conditional dual unit ball $\textbf{B}_{\textbf{E}^*}$ is said to be conditionally weakly-$*$ \textit{convex block compact} provided that each conditional sequence $\{\textbf{x}_\textbf{n}\}$ in $\textbf{B}_{\textbf{E}^*}$
 has a conditionally convex block weakly-$*$ convergent sequence.
\end{itemize}

%In the remainder of this section we will work with conditional Banach spaces.

\begin{prop}
For $1\leq p\leq \infty$, $(\textbf{L}^p_\F(\EE),\nVert\cdot\nVert_p)$ is a conditional Banach space. 
\end{prop}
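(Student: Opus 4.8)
The plan is to transfer the classical fact that $L^p$ is a Banach space to the conditional setting by means of Theorem \ref{thm: conection}, rather than redoing a Cauchy-sequence argument from scratch. First I would recall, as established in Example \ref{exam: Lp}, that $\textbf{L}^p_\F(\EE)=\psi(L^p_\F(\EE))$ and that the $L^0(\F)$-norm $\Vert\cdot|\F\Vert_p$ generates the conditional norm $\nVert\cdot\nVert_p$. The key point is then to show that a conditional sequence $\{\textbf{x}_\textbf{n}\}$ in $\textbf{L}^p_\F(\EE)$ is conditionally Cauchy if, and only if, the associated net of representatives $\{x_n\}\subset L^p_\F(\EE)$ is Cauchy for the $L^0(\F)$-norm in the topological $L^0$-module sense; and likewise that conditional convergence of $\{\textbf{x}_\textbf{n}\}$ to $\textbf{x}$ corresponds to $\Vert x_n - x|\F\Vert_p\to 0$ in $L^0(\F)$. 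This is exactly the content of the dictionary between conditional limits and a.s. limits recalled just before Theorem \ref{thm: Simons' sup-limsup}, combined with the fact that the conditional topology induced by $\nVert\cdot\nVert_p$ corresponds under $\phi$ to the topology stably induced by $\Vert\cdot|\F\Vert_p$ on $L^p_\F(\EE)$.

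The second and substantive step is to prove that $L^p_\F(\EE)$, endowed with the $L^0(\F)$-norm $\Vert\cdot|\F\Vert_p$, is $L^0$-norm complete, i.e. that every sequence $\{x_n\}$ with $\Vert x_n - x_m|\F\Vert_p\to 0$ converges in $\Vert\cdot|\F\Vert_p$ to some $x\in L^p_\F(\EE)$. For $1\le p<\infty$ one extracts, exactly as in the classical Riesz-Fischer argument but working pointwise in $\omega$, a subsequence along which $\sum_k \Vert x_{n_{k+1}} - x_{n_k}|\F\Vert_p < \infty$ on a set of full measure after a suitable partition, defines the candidate limit $x$ as an a.s.-convergent series, and checks using conditional Fatou / monotone convergence for conditional expectations that $x\in L^p_\F(\EE)$ and $\Vert x_{n_k} - x|\F\Vert_p\to 0$; standard Cauchy bookkeeping upgrades this to convergence of the whole sequence. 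For $p=\infty$ one uses instead that $\Vert y|\F\Vert_\infty = \essinf\{z\in\bar L^0(\F); z\ge|y|\}$ behaves like a pointwise essential supremum, so that a Cauchy sequence is, off a negligible set, uniformly Cauchy fibrewise and converges in $\Vert\cdot|\F\Vert_\infty$ to a limit still lying in $L^\infty_\F(\EE)=L^0(\F)L^\infty(\EE)$. Stability of $L^p_\F(\EE)$, noted in Example \ref{ex4} via \cite[Proposition 3.3]{key-37}, is what guarantees the candidate limit, built by concatenation over the partition produced in the extraction, is again a genuine element of the module.

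Finally I would combine the two steps: given a conditional Cauchy sequence $\{\textbf{x}_\textbf{n}\}$ in $\textbf{L}^p_\F(\EE)$, step one produces an $L^0$-norm Cauchy sequence of representatives, step two produces an $L^0$-norm limit $x\in L^p_\F(\EE)$, and step one again translates this back into conditional convergence $\textbf{x}_\textbf{n}\to\textbf{x}$ in $(\textbf{L}^p_\F(\EE),\nVert\cdot\nVert_p)$; hence the space is conditionally Banach. I expect the main obstacle to be the careful passage between conditional Cauchyness/convergence and $L^0$-norm Cauchyness/convergence: a conditional $\textbf{r}\in\textbf{R}^{++}$ is a strictly positive random variable, and a conditional natural number $\textbf{n}_\textbf{r}$ is a random index, so one must check that the countable partition $\{a_k\}$ implicit in $\textbf{r}$ and the $\A_{a_k}$-local estimates it carries are compatible with the subsequence extraction used in the Riesz-Fischer step — essentially verifying that the "random-index" formulation of Cauchyness is equivalent to the uniform-in-$\omega$-after-partition formulation, which is precisely where stability of the module (closure under countable concatenation) is used.
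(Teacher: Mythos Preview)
Your strategy is essentially the same as the paper's: reduce conditional completeness of $\textbf{L}^p_\F(\EE)$ to $L^0$-norm completeness of the underlying module $L^p_\F(\EE)$, and translate back. The paper, however, shortcuts both of your substantive steps. First, instead of redoing a Riesz--Fischer argument, it simply cites \cite{key-10} for the fact that $(L^p_\F(\EE),\Vert\cdot|\F\Vert_p)$ is complete in the sense that every Cauchy \emph{net} converges. Second, for the translation step that you correctly flag as the delicate point (random $\varepsilon$, random index $\textbf{n}_\textbf{r}$, partition bookkeeping), the paper avoids all of this by observing that a conditional sequence $\{\textbf{x}_\textbf{n}\}_{\textbf{n}\in\textbf{N}}$ is literally a stable family $\{x_n\}_{n\in L^0(\N)}$, and $L^0(\N)$ is directed upwards, so this family is a net; the conditional Cauchy condition then reads verbatim as the Cauchy condition for this net in the $L^0$-norm topology, and net completeness finishes immediately.

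Your approach is not wrong, just heavier: you reprove a cited result, and you fight the random-index issue by partitions where the net formulation makes it disappear. The net viewpoint is worth internalising, since it is precisely what makes the passage between conditional Cauchyness and $L^0$-module Cauchyness transparent without any subsequence extraction or concatenation of local limits.
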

\begin{proof}

In \cite{key-10}, it is shown that the $L^0$-normed module $({L}^p_\F(\EE),\Vert\cdot|\F\Vert_p)$ is complete in the sense that every Cauchy net converges in $L^p_\F(\EE)$. Now, let $\{\textbf{x}_\textbf{n}\}$ be a conditional Cauchy sequence in $\textbf{L}^p_\F(\EE)$. Then, we can consider the stable family $\{x_n\}_{n\in L^0(\N)}$. We have that $L^0(\N)$ is  directed upwards, and therefore $\{x_n\}_{n\in L^0(\N)}$ is a net indexed by $L^0(\N)$. Furthermore, since  $\{\textbf{x}_\textbf{n}\}$ is conditionally Cauchy, it follows that  $\{x_n\}_{n\in L^0(\N)}$ is Cauchy. Finally, since $L^p_\F(\EE)$ is complete, $\{x_n\}_{n\in L^0(\N)}$ converges to some $x_0\in{L}^p_\F(\EE) $. If follows that $[\textbf{x}_\textbf{n}]$ conditionally converges to $\textbf{x}_0$.

\end{proof}

Let $(\textbf{E},\nVert\cdot\nVert)$ be a conditional Banach space. Let us denote $\textbf{E}^{**}:=(\textbf{E}^*)^*$. For each $\textbf{x}\in\textbf{E}$, let us define the conditional function $\textbf{j}_\textbf{x}:\textbf{E}^*\rightarrow\textbf{R}$ with $\textbf{j}_\textbf{x}(\textbf{x}^*)=\textbf{x}^*(\textbf{x})$. Then, in \cite[Theorem 3.5]{key-38} was proved that $\textbf{j}:\textbf{E}\rightarrow\textbf{E}^{**}$ with $\textbf{j}(\textbf{x}):=\textbf{j}_\textbf{x}$ is a conditional isometry, i.e. $\Vert \textbf{x}\Vert=\Vert\textbf{j}(\textbf{x})\Vert$ for all $\textbf{x}\in\textbf{E}$, which is referred to as \textit{natural conditional embedding}.  

A conditional version of the classical Eberlein-\v{S}mulian theorem was provided in \cite[Theorem 4.7]{key-38}. A simple variation of this result is the following (the proof is provided in the Appendix):

\begin{thm}
\label{thm: EberleinSmulianII}
Let $(\textbf{E},\nVert\cdot\nVert)$ be a conditional normed space and let $\textbf{K}\sqsubset\textbf{E}$ be on $1$ and  conditionally bounded. Let $\textbf{j}:\textbf{E}\rightarrow\textbf{E}^{**}$ be natural conditional embedding, and suppose that
\[
\begin{array}{cc}
\overline{\textbf{j}(\textbf{K})}^{\sigma(\textbf{E}^{**},\textbf{E}^*)}\sqcap\textbf{j}(\textbf{E})^\sqsubset & \textnormal{ is on }1.
\end{array}
\]
Then, there exists a conditional sequence $\{\textbf{x}_\textbf{n}\}$ in $\textbf{K}$ with a conditional cluster point $\textbf{x}^{**}\in\textbf{E}^{**}\sqcap\textbf{j}(\textbf{E})^\sqsubset$ with respect the conditional $\sigma(\textbf{E}^{**},\textbf{E}^*)$-topology.
\end{thm}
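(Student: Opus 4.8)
The plan is to reduce the statement to the conditional Eberlein--\v{S}mulian theorem already available as \cite[Theorem 4.7]{key-38} by passing to the conditionally weak-$*$ closure of $\textbf{j}(\textbf{K})$ inside $\textbf{E}^{**}$. First I would observe that, since $\textbf{K}$ is conditionally bounded and $\textbf{j}$ is a conditional isometry, the conditional image $\textbf{j}(\textbf{K})$ is a conditionally bounded subset of the conditional Banach space $\textbf{E}^{**}$; hence its conditional $\sigma(\textbf{E}^{**},\textbf{E}^*)$-closure $\textbf{W}:=\overline{\textbf{j}(\textbf{K})}^{\sigma(\textbf{E}^{**},\textbf{E}^*)}$ is conditionally weak-$*$ compact by the conditional Banach--Alaoglu theorem (this is in \cite{key-7,key-38}). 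By hypothesis the conditional subset $\textbf{W}\sqcap\textbf{j}(\textbf{E})^{\sqsubset}$ is on $1$, so we may pick a conditional element $\textbf{x}^{**}\in\textbf{W}$ that conditionally lies outside $\textbf{j}(\textbf{E})$ on all of $1$ (using the formal set-builder notation and the fact that this conditional subset is non-trivial on $1$).

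Next I would apply the conditional Eberlein--\v{S}mulian theorem to the conditionally weak-$*$ compact set $\textbf{W}$. The key point is that $\textbf{x}^{**}$, being in the conditional closure of $\textbf{j}(\textbf{K})$, is a conditional limit point of $\textbf{j}(\textbf{K})$ in $\sigma(\textbf{E}^{**},\textbf{E}^*)$; the conditional Eberlein--\v{S}mulian machinery then produces a conditional sequence $\{\textbf{j}(\textbf{x}_\textbf{n})\}$ with $\textbf{x}_\textbf{n}\in\textbf{K}$ that conditionally $\sigma(\textbf{E}^{**},\textbf{E}^*)$-clusters at some $\textbf{x}^{**}$. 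Concretely, one runs the standard argument: choose $\textbf{x}_\textbf{1}\in\textbf{K}$; having chosen $\textbf{x}_\textbf{1},\dots,\textbf{x}_\textbf{k}$, use that $\textbf{E}^*$ is conditionally spanned in the relevant finite-dimensional sense by finitely many conditional functionals witnessing the first $\textbf{k}$ approximations (invoking the conditional version of the fact that a conditionally finite-dimensional subspace is conditionally separable, as in \cite{key-38}), to pick $\textbf{x}_{\textbf{k}+\textbf{1}}\in\textbf{K}$ whose conditional image under $\textbf{j}$ approximates $\textbf{x}^{**}$ on those functionals within $\textbf{k}^{-1}$. Conditional compactness of $\textbf{W}$ then guarantees a conditional cluster point of $\{\textbf{j}(\textbf{x}_\textbf{n})\}$, and the approximation conditions force it to be $\textbf{x}^{**}$ itself.

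Finally, I would check that the produced cluster point can be taken in $\textbf{E}^{**}\sqcap\textbf{j}(\textbf{E})^{\sqsubset}$ on $1$. This is where the hypothesis is genuinely used: the chosen $\textbf{x}^{**}$ already lies in $\textbf{W}\sqcap\textbf{j}(\textbf{E})^{\sqsubset}$, which is on $1$, and the sequence was built to cluster precisely at $\textbf{x}^{**}$; since clustering is a conditional property stable under concatenation, and since $\textbf{j}(\textbf{E})^{\sqsubset}$ is a conditional subset, the conclusion holds on all of $1$. The main obstacle I anticipate is the bookkeeping of the inductive selection in a conditional setting: one must ensure that at each step the finitely many conditional functionals chosen form a conditionally finite conditional subset of $\textbf{E}^*$ on $1$ (not merely on a proper $a\in\A$), and that the approximation inequalities are phrased conditionally so that the diagonal/clustering argument goes through globally; this is exactly the technical content already handled in the proof of \cite[Theorem 4.7]{key-38}, so the present statement follows by the same reasoning applied to the conditionally bounded set $\textbf{K}$ rather than to a conditionally weakly compact one, with the extra hypothesis supplying the needed non-degeneracy of the cluster point off $\textbf{j}(\textbf{E})$.
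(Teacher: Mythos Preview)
Your proposal is essentially the same as the paper's proof: pick $\textbf{x}^{**}\in\overline{\textbf{j}(\textbf{K})}^{\omega^*}\sqcap\textbf{j}(\textbf{E})^{\sqsubset}$, run the inductive construction from \cite[Theorem 4.7]{key-38} to produce $\{\textbf{x}_\textbf{n}\}\sqsubset\textbf{K}$ and auxiliary functionals, use conditional Banach--Alaoglu to get a weak-$*$ cluster point $\textbf{x}^{**}_0$ of $\{\textbf{j}(\textbf{x}_\textbf{n})\}$, and then force $\textbf{x}^{**}_0=\textbf{x}^{**}$ via the approximation estimates. The one step you pass over quickly---``conditional compactness of $\textbf{W}$ then guarantees a conditional cluster point''---is not automatic in the conditional setting and the paper proves it separately (every conditional sequence in a conditionally compact space has a conditional cluster point); apart from that, the arguments coincide.
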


%We need to recall the following result:

%\begin{lem}
%\cite[Lemma 3.2.]{key-38}
%\label{lem: WSClosure}
%Let $\textbf{E}$ be a conditionally normed space and suppose $\textbf{K}\sqsubset \textbf{E}$ on $1$, conditionally bounded and conditionally weakly closed. Then, $\overline{\textbf{j}(\textbf{K})}^{\sigma(\textbf{E}^{**},\textbf{E}^*)}\sqsubset \textbf{j}(\textbf{E})$  if, and only if, $\textbf{K}$ is conditionally weakly compact.
%\end{lem}

We have the following result:

\begin{lem}
\label{lem: weakCompact}
Let $(\textbf{E},\nVert\cdot\nVert)$ be a conditional Banach space. Suppose that the conditional dual unit ball of $\textbf{E}$ is conditionally weakly-$*$ convex block compact and that $\textbf{K}$ is a  conditional subset of $\textbf{E}$ on $1$ which is conditionally bounded. Then $\textbf{K}$ is conditionally weakly relatively compact if, and only if, each conditional sequence $\{\textbf{x}^*_\textbf{n}\}$ in $\textbf{E}^*$ such that $\nlim_{\textbf{n}} \textbf{x}^*_\textbf{n}=\textbf{0}$ with $\sigma(\textbf{E}^*,\textbf{E})$, also satisfies that $\nlim_{\textbf{n}} \textbf{x}^*_\textbf{n}=\textbf{0}$ with $\sigma(\textbf{E}^*,\overline{\textbf{j}(\textbf{K})}^{\sigma(\textbf{E}^{**},\textbf{E}^*)})$, where $\textbf{j}:\textbf{E}\rightarrow\textbf{E}^{**}$ is the natural conditional embedding. 
\end{lem}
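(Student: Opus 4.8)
The plan is to prove the lemma by transcribing, into the conditional setting, the classical argument that relates weak relative compactness of $\textbf{K}$ to the agreement of $\sigma(\textbf{E}^*,\textbf{E})$-convergence and $\sigma(\textbf{E}^*,\overline{\textbf{j}(\textbf{K})}^{\sigma(\textbf{E}^{**},\textbf{E}^*)})$-convergence for null sequences in $\textbf{E}^*$. The forward implication is the easy one: if $\textbf{K}$ is conditionally weakly relatively compact, then by the conditional version of Eberlein--\v{S}mulian (Theorem \ref{thm: EberleinSmulianII}) we have that $\overline{\textbf{j}(\textbf{K})}^{\sigma(\textbf{E}^{**},\textbf{E}^*)}$ is conditionally contained in $\textbf{j}(\textbf{E})$ (if the conditional complement $\overline{\textbf{j}(\textbf{K})}^{\sigma(\textbf{E}^{**},\textbf{E}^*)}\sqcap\textbf{j}(\textbf{E})^\sqsubset$ were on some $a>0$, we would produce on $a$ a conditional sequence in $\textbf{K}$ with a conditional cluster point outside $\textbf{j}(\textbf{E})$, contradicting weak relative compactness on $a$; here one uses the local character of conditional statements). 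Once $\overline{\textbf{j}(\textbf{K})}^{\sigma(\textbf{E}^{**},\textbf{E}^*)}\sqsubset\textbf{j}(\textbf{E})$, every conditional element of $\overline{\textbf{j}(\textbf{K})}^{\sigma(\textbf{E}^{**},\textbf{E}^*)}$ is of the form $\textbf{j}(\textbf{x})$ for some $\textbf{x}\in\textbf{E}$, so $\sigma(\textbf{E}^*,\overline{\textbf{j}(\textbf{K})}^{\sigma(\textbf{E}^{**},\textbf{E}^*)})$ is coarser than (in fact dominated by) $\sigma(\textbf{E}^*,\textbf{E})$ and the implication on null sequences is immediate.

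For the converse, I would argue by contraposition: assume $\textbf{K}$ is not conditionally weakly relatively compact, and produce a $\sigma(\textbf{E}^*,\textbf{E})$-null conditional sequence in $\textbf{E}^*$ which fails to be $\sigma(\textbf{E}^*,\overline{\textbf{j}(\textbf{K})}^{\sigma(\textbf{E}^{**},\textbf{E}^*)})$-null. First, by localization one may assume the failure of conditional weak relative compactness is global, i.e.\ on $1$. Then, using Theorem \ref{thm: EberleinSmulianII} again (contrapositively), the conditional set $\overline{\textbf{j}(\textbf{K})}^{\sigma(\textbf{E}^{**},\textbf{E}^*)}\sqcap\textbf{j}(\textbf{E})^\sqsubset$ is on some $a>0$, so there exists $\textbf{x}^{**}$ in this conditional intersection, i.e.\ $\textbf{x}^{**}$ lies in the conditional $\sigma(\textbf{E}^{**},\textbf{E}^*)$-closure of $\textbf{j}(\textbf{K})$ but is conditionally outside $\textbf{j}(\textbf{E})$, and one may pass to $a$ and relabel so that this is global. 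Because $\textbf{x}^{**}\notin\textbf{j}(\textbf{E})$ and $\textbf{j}(\textbf{E})$ is conditionally norm-closed (it is conditionally isometric to the conditional Banach space $\textbf{E}$, hence conditionally complete, hence conditionally closed), there is a conditional gap $\textbf{d}\in\textbf{R}^{++}$ with $\nVert\textbf{x}^{**}-\textbf{j}(\textbf{x})\nVert\geq\textbf{d}$ for all $\textbf{x}\in\textbf{E}$. The construction then mimics the classical interlacing/alternating argument: using that $\textbf{x}^{**}$ is in the conditional weak-$*$ closure of $\textbf{j}(\textbf{K})$, the conditional weak-$*$ convex block compactness of $\textbf{B}_{\textbf{E}^*}$, and a conditional Helly-type selection, one builds by conditional recursion a conditional sequence $\{\textbf{x}^*_\textbf{n}\}$ in $\textbf{B}_{\textbf{E}^*}$ and conditional points $\textbf{k}_\textbf{n}\in\textbf{K}$ so that $\textbf{x}^*_\textbf{n}\to\textbf{0}$ in $\sigma(\textbf{E}^*,\textbf{E})$ while $\textbf{x}^{**}(\textbf{x}^*_\textbf{n})$ stays conditionally bounded away from $\textbf{0}$ (comparable to $\textbf{d}$); since $\textbf{x}^{**}$ is a conditional cluster point of $\{\textbf{j}(\textbf{k}_\textbf{n})\}$ in $\sigma(\textbf{E}^{**},\textbf{E}^*)$, this forces $\{\textbf{x}^*_\textbf{n}\}$ not to converge to $\textbf{0}$ in $\sigma(\textbf{E}^*,\overline{\textbf{j}(\textbf{K})}^{\sigma(\textbf{E}^{**},\textbf{E}^*)})$, completing the contraposition.

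The main obstacle I expect is the conditional recursive construction in the converse: one must carry out the classical alternating argument (of the type used to prove Whitley's or Kadec's characterization, or in the standard proof that weak-$*$ sequential compactness of the dual ball plus the Eberlein--\v{S}mulian machinery yields weak relative compactness) entirely with conditional sequences, conditional convex block sequences, and conditional $\varepsilon$-bookkeeping where $\varepsilon$ is replaced by conditional positive reals. Two points need care. First, one must ensure that all the finitely-many-at-a-time choices are made \emph{stably}, i.e.\ respecting concatenations along partitions of $1$, so that the objects produced are genuine conditional sequences; this is where the hypothesis that $\textbf{B}_{\textbf{E}^*}$ is conditionally weakly-$*$ convex block compact (rather than merely pointwise/locally so) does the work, since it delivers a single conditional convex block subsequence globally. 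Second, one must invoke the conditional weak-$*$ closure membership of $\textbf{x}^{**}$ correctly: in the conditional topology $\sigma(\textbf{E}^{**},\textbf{E}^*)$ a conditional neighborhood of $\textbf{x}^{**}$ is determined by a conditionally finite family of conditional functionals and a conditional radius, so one extracts $\textbf{k}_\textbf{n}\in\textbf{K}$ with $\textbf{j}(\textbf{k}_\textbf{n})$ conditionally close to $\textbf{x}^{**}$ against all previously chosen $\textbf{x}^*_1,\dots,\textbf{x}^*_{\textbf{n}-1}$; again stability of the selection is the delicate bookkeeping. Everything else (conditional completeness of $\textbf{j}(\textbf{E})$, existence of the conditional separating functional giving the gap $\textbf{d}$, the passage between global and local statements via the measure algebra $\A$) is routine conditional functional analysis available from \cite{key-7} and \cite{key-38}.
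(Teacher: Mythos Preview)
Your forward implication is fine and matches the paper's treatment.

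The converse, however, has a genuine gap. The crux is to manufacture, from $\textbf{x}^{**}\in\overline{\textbf{j}(\textbf{K})}^{\omega^*}\sqcap\textbf{j}(\textbf{E})^\sqsubset$, a conditional sequence $\{\textbf{x}^*_\textbf{n}\}$ in $\textbf{B}_{\textbf{E}^*}$ that is $\sigma(\textbf{E}^*,\textbf{E})$-null while $\textbf{x}^{**}(\textbf{x}^*_\textbf{n})$ stays conditionally away from $\textbf{0}$. A norm gap $\textbf{d}$ between $\textbf{x}^{**}$ and $\textbf{j}(\textbf{E})$ together with an ``interlacing/alternating'' recursion does not deliver this: nothing in that bookkeeping forces $\textbf{x}^*_\textbf{n}(\textbf{x})\to\textbf{0}$ for \emph{every} $\textbf{x}\in\textbf{E}$. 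What you really need is an element of $\textbf{E}^{***}$ that annihilates $\textbf{j}(\textbf{E})$ and is nonzero on $\textbf{x}^{**}$, together with an approximation of it from $\textbf{j}^*(\textbf{B}_{\textbf{E}^*})$. The paper supplies exactly this: it first invokes Theorem~\ref{thm: EberleinSmulianII} to produce a conditional sequence $\{\textbf{x}_\textbf{n}\}\sqsubset\textbf{K}$ with a conditional $\sigma(\textbf{E}^{**},\textbf{E}^*)$-cluster point $\textbf{x}^{**}_0\notin\textbf{j}(\textbf{E})$; then applies the conditional Hahn--Banach theorem in $\textbf{E}^{***}$ to obtain $\textbf{x}^{***}\in\textbf{B}_{\textbf{E}^{***}}$ with $\textbf{x}^{***}|_{\textbf{j}(\textbf{E})}=\textbf{0}$ and $\textbf{x}^{***}(\textbf{x}^{**}_0)\in[\textbf{0}]^\sqsubset$; and finally uses the conditional Goldstine theorem to pick $\textbf{x}^*_n\in\textbf{B}_{\textbf{E}^*}$ with $\textbf{j}^*(\textbf{x}^*_n)$ close to $\textbf{x}^{***}$ against the conditionally finitely many elements $\textbf{x}^{**}_0,\textbf{j}(\textbf{x}_\textbf{1}),\dots,\textbf{j}(\textbf{x}_\textbf{n})$. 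Your sketch contains no substitute for the Hahn--Banach/Goldstine pair.

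You also misidentify where the convex block compactness enters. It is not a stability device for the recursion. The Goldstine step yields convergence of $\{\textbf{j}^*(\textbf{x}^*_n)\}$ only against the countable family $\textbf{x}^{**}_0,\textbf{j}(\textbf{x}_\textbf{k})$, \emph{not} in $\sigma(\textbf{E}^*,\textbf{E})$. The conditional $\omega^*$-convex block compactness of $\textbf{B}_{\textbf{E}^*}$ is precisely what extracts a convex block sequence $\{\textbf{y}^*_\textbf{n}\}$ of $\{\textbf{x}^*_\textbf{n}\}$ that \emph{does} converge in $\sigma(\textbf{E}^*,\textbf{E})$, say to $\textbf{x}^*_0$. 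One then applies the hypothesis of the lemma to the null sequence $\textbf{y}^*_\textbf{n}-\textbf{x}^*_0$, forcing $\textbf{y}^*_\textbf{n}\to\textbf{x}^*_0$ against $\textbf{x}^{**}_0$ as well. The contradiction comes from computing $\textbf{x}^{**}_0(\textbf{x}^*_0)$ in two ways: via the convex block convergence it equals $\textbf{x}^{***}(\textbf{x}^{**}_0)\in[\textbf{0}]^\sqsubset$, while $\textbf{x}^*_0(\textbf{x}_\textbf{k})=\textbf{x}^{***}(\textbf{j}(\textbf{x}_\textbf{k}))=\textbf{0}$ for every $\textbf{k}$, which (since $\textbf{x}^{**}_0$ is a conditional cluster point of $\{\textbf{j}(\textbf{x}_\textbf{k})\}$) forces $\textbf{x}^{**}_0(\textbf{x}^*_0)=\textbf{0}$.
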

\begin{proof}
Let us denote $\overline{\textbf{K}}^{\omega^*}=\overline{\textbf{j}(\textbf{K})}^{\sigma(\textbf{E}^{**},\textbf{E}^*)}$. If $\textbf{K}$ is conditionally  weakly relatively compact, then $\overline{\textbf{K}}^{\omega^*}\sqsubset \textbf{j}(\textbf{E})$ and, in view of \cite[Lemma 4.2]{key-38}, we are done.  

Conversely, let us define
\[
b:=\vee\left\{a\in\A\:;\: \overline{\textbf{K}}^{\omega^*}|a\sqsubset \textbf{j}(\textbf{E})|a  \right\}.
\]

If $b=1$, we are done. If not, we can consistently argue on $b^c$. Thus, we can suppose $b=0$ w.l.g. If so,  Theorem \ref{thm: EberleinSmulianII} guarantees the existence of a conditional sequence $\{\textbf{x}_\textbf{n}\}$ in $\textbf{K}$ with a conditional $\sigma(\textbf{E}^{**},\textbf{E}^*)$-cluster point $\textbf{x}_0^{**}\in\textbf{E}^{**}\sqcap\textbf{j}(\textbf{E})^{\sqsubset}$. We can now apply the conditional version of the separation Hahn-Banach theorem (see \cite[Theorem 5.5]{key-7})  to obtain $\textbf{x}^{***}\in\textbf{B}_{\textbf{E}^{***}}$ such that
\begin{equation}
\label{eqI}
\begin{array}{ccc}
\textbf{x}^{***}(\textbf{x}^{**}_0)\in[\textbf{0}]^\sqsubset & \textnormal{ and }&\textbf{x}^{***}(\textbf{j}(\textbf{x}))=\textbf{0}\textnormal{ for all }\textbf{x}\in\textbf{E}.
\end{array}
\end{equation}

For $\textbf{n}\in\textbf{N}$, let us define the conditional set
\[
\textbf{U}_\textbf{n}:=\left[\textbf{z}^{***}\in\textbf{E}^{***} \:;\: \textbf{z}^{***}(\textbf{x}^{**}_0)\leq \frac{\textbf{1}}{\textbf{n}},\textbf{z}^{***}(\textbf{j}(\textbf{x}_\textbf{k}))\leq\frac{\textbf{1}}{\textbf{n}}\textnormal{ for all }\textbf{k}\in \textbf{N}\textnormal{ with } \textbf{k}\leq\textbf{n} \right].
\]
%and for arbitrary $\textbf{n}|a\in\textbf{N}$, define $\textbf{U}_{\textbf{n}|a}:=\textbf{U}_{\textbf{n}}|a$. 
 %the conditional set consisting of the elements $\textbf{z}^{***}\in\textbf{E}^{***}$ with $\sigma(\textbf{z}^{***})\leq\sigma(\textbf{n})$ such that $\textbf{x}^{**}_0(\textbf{z}^{***})\leq \textbf{n}^{-1}$ and $\textbf{z}^{***}(\textbf{x}_\textbf{k})\leq\textbf{n}^{-1}$ on $\sigma(\textbf{z}^{***})$ for all $\textbf{k}\in\textbf{N}$ with $\textbf{k}\leq\textbf{n}$.

Then $[\textbf{U}_\textbf{n}\:;\:\textbf{n}\in\textbf{N}]$ is a conditional neighborhood base of $\textbf{0}\in\textbf{E}^{***}$ for the conditional topology $\sigma(\textbf{E}^{***},[\textbf{x}^{**}_0,\textbf{j}(\textbf{x}_\textbf{n})\:;\:\textbf{n}\in\textbf{N}])$.

Now, the conditional version of Goldstine's theorem (see \cite[Theorem 3.6]{key-38}) claims that $\textbf{j}^*(\textbf{B}_{\textbf{E}^{*}})$ is conditionally $\sigma(\textbf{E}^{***},\textbf{E}^{**})$-dense in $\textbf{B}_{\textbf{E}^{***}}$ (where $\textbf{j}^*:\textbf{E}^*\rightarrow \textbf{E}^{***}$ is the natural conditional embedding of $\textbf{E}^*$). In particular, for each $n\in\N$ there exists $\textbf{x}^*_n\in\textbf{B}_{\textbf{E}^*}$ with $\textbf{j}^*(\textbf{x}^*_n)\in\textbf{U}_\textbf{n}$. For an arbitrary $\textbf{n}\in\textbf{N}$ with $n=\sum_k n_k|a_k$, we define $x^*_n:=\sum_k x^*_{n_k}|a_k$. 

Then, we obtain a conditional sequence $\{\textbf{x}^*_\textbf{n}\}$ in $\textbf{B}_{\textbf{E}^*}$ such that 

\begin{equation}
\label{eqII}
\sigma(\textbf{E}^{***},[\textbf{x}^{**}_0,\textbf{j}(\textbf{x}_\textbf{n})\:;\:\textbf{n}\in\textbf{N}])-\nlim_\textbf{n} \textbf{j}^*(\textbf{x}^*_\textbf{n})=\textbf{x}^{***}.
\end{equation}

Since $\textbf{B}_{\textbf{E}^*}$ is conditionally convex block $\sigma(\textbf{E}^*,\textbf{E})$-compact, there exists a conditionally  convex block compact sequence $\{\textbf{y}^*_\textbf{n}\}$ of $\{\textbf{x}^*_\textbf{n}\}$ and $\textbf{x}^*_0\in\textbf{B}_{\textbf{E}^*}$ such that $\sigma(\textbf{E}^*,\textbf{E})-\nlim_\textbf{n} \textbf{y}^*_\textbf{n}=\textbf{x}^*_0$.

Then, by assumption, we have $\sigma(\textbf{E}^*,\overline{\textbf{K}}^{\omega^*})-\nlim_\textbf{n} \textbf{y}^*_\textbf{n}=\textbf{x}^*_0$, and so
\begin{equation}
\label{eqIII}
\sigma(\textbf{E}^{***},[\textbf{x}^{**}_0,\textbf{j}(\textbf{x}_\textbf{n})\:;\:\textbf{n}\in\textbf{N}])-\nlim_\textbf{n} \textbf{j}^*(\textbf{y}^*_\textbf{n})=\textbf{j}^*(\textbf{x}^*_0).
\end{equation}

Finally, it follows from (\ref{eqI}), (\ref{eqII}) and (\ref{eqIII})
\[
\textbf{x}^{**}_0(\textbf{x}^{*}_0)=\nlim_\textbf{n} \textbf{x}^{**}_0(\textbf{y}^*_\textbf{n})=\nlim_\textbf{n} \textbf{x}^{**}_0(\textbf{x}^*_\textbf{n})=\textbf{x}^{***}(\textbf{x}^{**}_0)\in [\textbf{0}]^\sqsubset,
\]
but for each $\textbf{k}\in\textbf{N}$,
\[
\textbf{x}^{*}_0(\textbf{x}_\textbf{k})=\nlim_\textbf{n} \textbf{y}^*_\textbf{n}(\textbf{x}_\textbf{k})=\nlim_\textbf{n} \textbf{x}^*_\textbf{n}(\textbf{x}_\textbf{k})=\textbf{x}^{***}(\textbf{j}(\textbf{x}_\textbf{k}))=\textbf{0},
\] 
which is a contradiction, because $\textbf{x}^{**}_0$ is a conditional $\sigma(\textbf{E}^{**},\textbf{E}^*)$-cluster point of $\{\textbf{x}_\textbf{n}\}$.
\end{proof}

\begin{thm}\label{thm: Rainwater}[Conditional and unbounded version of Rainwater–Simons’ theorem]
Let $(\textbf{E},\nVert\cdot\nVert)$ be a conditional normed space and let $\textbf{C},\textbf{B}$ be conditional subsets of $\textbf{E}^*$ on $1$ with $\textbf{B}\sqsubset\textbf{C}$. Suppose that $\{\textbf{x}_\textbf{n}\}$ is a conditionally bounded sequence in $\textbf{E}$ such that
\[
\textnormal{for every } \textbf{x} \in\nco_{\sigma, \textbf{R}} \left[\textbf{x}_\textbf{n}\right] \textnormal{ there exists } \textbf{b}^* \in\textbf{B} \textnormal{ with } \textbf{b}^*(\textbf{x}) =\nsup \left[\textbf{x}^*(\textbf{x})\:;\: \textbf{x}^*\in\textbf{C}\right], 
\]
then,
\[
\underset{\textbf{x}^*\in\textbf{B}}\nsup \underset{\textbf{n}}\nlimsup \textbf{x}^*(\textbf{x}_\textbf{n}) = \underset{\textbf{x}^*\in\textbf{C}}\nsup \underset{\textbf{n}}\nlimsup \textbf{x}^*(\textbf{x}_\textbf{n}). 
\]
As a consequence, if there exists a conditional sequence $\{\textbf{y}_\textbf{n}\}$ such that $\sigma(\textbf{E},\textbf{C})-\nlim_\textbf{n}\textbf{y}_\textbf{n}=\textbf{0}$ and so that
\[
\textnormal{for every } \textbf{x} \in\nco_{\sigma, \textbf{R}} \left[\textbf{x}_\textbf{n}+\textbf{y}_\textbf{n}\right]\sqcup\nco_{\sigma, \textbf{R}} \left[-\textbf{x}_\textbf{n}+\textbf{y}_\textbf{n}\right]
\]
\[
\textnormal{ there exists } \textbf{b}^* \in\textbf{B} \textnormal{ with } \textbf{b}^*(\textbf{x}) =\nsup \left[\textbf{x}^*(\textbf{x})\:;\: \textbf{x}^*\in\textbf{C}\right], 
\]
then
\[
\sigma (\textbf{E},\textbf{B})-\underset{\textbf{n}}\nlim \textbf{x}_\textbf{n} =\textbf{0} \textnormal{ implies } \sigma (\textbf{E},\textbf{C})-\underset{\textbf{n}}\nlim \textbf{x}_\textbf{n} =\textbf{0}. 
\]

\end{thm}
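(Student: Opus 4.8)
The strategy is to transcribe the classical Rainwater--Simons argument into the conditional setting, invoking the conditional Simons' sup-limsup theorem (Theorem \ref{thm: Simons' sup-limsup}) as the main analytic engine. For the first assertion, note that since $\textbf{B}\sqsubset\textbf{C}$ one inequality
\[
\underset{\textbf{x}^*\in\textbf{B}}\nsup\:\underset{\textbf{n}}\nlimsup\:\textbf{x}^*(\textbf{x}_\textbf{n})\leq\underset{\textbf{x}^*\in\textbf{C}}\nsup\:\underset{\textbf{n}}\nlimsup\:\textbf{x}^*(\textbf{x}_\textbf{n})
\]
is immediate from conditional monotonicity of the conditional supremum. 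For the reverse inequality I would apply Theorem \ref{thm: Simons' sup-limsup} with the conditional set there taken to be $\textbf{C}$, the conditional sequence $\textbf{f}_\textbf{n}:=\textbf{j}(\textbf{x}_\textbf{n})$ of evaluation functionals restricted to $\textbf{C}$ (conditionally bounded on $\textbf{C}$ because $\{\textbf{x}_\textbf{n}\}$ is conditionally bounded and elements of $\textbf{C}\sqsubset\textbf{E}^*$ are conditionally bounded on bounded sets), and $\textbf{B}$ playing the role of the subset $\textbf{C}$ of the sup-limsup theorem. The hypothesis of the present theorem — that every $\textbf{g}\in\nco_{\sigma,\textbf{R}}[\textbf{x}_\textbf{n}\,;\,\textbf{n}\geq\textbf{1}]$ (viewed as a conditional function on $\textbf{C}$ via evaluation) attains its conditional supremum over $\textbf{C}$ at some point of $\textbf{B}$ — is exactly the hypothesis of Theorem \ref{thm: Simons' sup-limsup}, so that theorem yields $\nsup_\textbf{C}(\nlimsup_\textbf{n}\textbf{f}_\textbf{n})=\nsup_\textbf{B}(\nlimsup_\textbf{n}\textbf{f}_\textbf{n})$, which is precisely the claimed equality. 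One subtlety to check carefully: the $\nco_{\sigma,\textbf{R}}$ appearing in the present hypothesis is written in terms of $\{\textbf{x}_\textbf{n}\}$, whereas in Theorem \ref{thm: Simons' sup-limsup} it is phrased for conditional functions $\textbf{f}_\textbf{n}$; the identification is via the conditional isometry $\textbf{j}$, and one must verify that the conditional boundedness condition $\nsup[|\textbf{f}_\textbf{n}(\textbf{x}^*)|\,;\,\textbf{n}\in\textbf{N}]\in\textbf{R}$ holds for every $\textbf{x}^*\in\textbf{C}$, which follows from conditional boundedness of $\{\textbf{x}_\textbf{n}\}$ together with the defining property of $\textbf{E}^*$.

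For the consequence, I would argue by contradiction on the Boolean algebra: suppose $\sigma(\textbf{E},\textbf{B})\text{-}\nlim_\textbf{n}\textbf{x}_\textbf{n}=\textbf{0}$ but that it fails that $\sigma(\textbf{E},\textbf{C})\text{-}\nlim_\textbf{n}\textbf{x}_\textbf{n}=\textbf{0}$. Then there is a nonzero $a\in\A$ and some $\textbf{c}^*\in\textbf{C}$ and $\textbf{r}\in\textbf{R}^{++}$ such that, on $a$, $\nlimsup_\textbf{n}|\textbf{c}^*(\textbf{x}_\textbf{n})|\geq\textbf{r}$; by consistency we may pass to $a$ and argue as if $a=1$. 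After possibly replacing $\textbf{x}_\textbf{n}$ by $-\textbf{x}_\textbf{n}$ on part of the partition (this is where the symmetrized hull $\nco_{\sigma,\textbf{R}}[\textbf{x}_\textbf{n}+\textbf{y}_\textbf{n}]\sqcup\nco_{\sigma,\textbf{R}}[-\textbf{x}_\textbf{n}+\textbf{y}_\textbf{n}]$ enters), we may assume $\nlimsup_\textbf{n}\textbf{c}^*(\textbf{x}_\textbf{n})\geq\textbf{r}>\textbf{0}$. Now set $\textbf{z}_\textbf{n}:=\textbf{x}_\textbf{n}+\textbf{y}_\textbf{n}$, so that $\sigma(\textbf{E},\textbf{B})\text{-}\nlim_\textbf{n}\textbf{z}_\textbf{n}=\textbf{0}$ (using $\sigma(\textbf{E},\textbf{C})\text{-}\nlim\textbf{y}_\textbf{n}=\textbf{0}$ hence a fortiori $\sigma(\textbf{E},\textbf{B})$) while $\nlimsup_\textbf{n}\textbf{c}^*(\textbf{z}_\textbf{n})=\nlimsup_\textbf{n}\textbf{c}^*(\textbf{x}_\textbf{n})\geq\textbf{r}$ because $\textbf{c}^*(\textbf{y}_\textbf{n})\to\textbf{0}$. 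Applying the first part of the theorem to the sequence $\{\textbf{z}_\textbf{n}\}$ (whose symmetrized convex hull is covered by the stated attainment hypothesis), one gets
\[
\textbf{0}<\textbf{r}\leq\underset{\textbf{x}^*\in\textbf{C}}\nsup\:\underset{\textbf{n}}\nlimsup\:\textbf{x}^*(\textbf{z}_\textbf{n})=\underset{\textbf{x}^*\in\textbf{B}}\nsup\:\underset{\textbf{n}}\nlimsup\:\textbf{x}^*(\textbf{z}_\textbf{n})\leq\underset{\textbf{x}^*\in\textbf{B}}\nsup\:\underset{\textbf{n}}\nlimsup\:|\textbf{x}^*(\textbf{z}_\textbf{n})|=\textbf{0},
\]
the last equality because $\sigma(\textbf{E},\textbf{B})\text{-}\nlim\textbf{z}_\textbf{n}=\textbf{0}$ means $\textbf{x}^*(\textbf{z}_\textbf{n})\to\textbf{0}$ for each $\textbf{x}^*\in\textbf{B}$ and one may pass the conditional $\nlimsup$ inside. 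This is the desired contradiction, and since $a$ was an arbitrary nonzero piece on which $\sigma(\textbf{E},\textbf{C})$-convergence failed, we conclude $\sigma(\textbf{E},\textbf{C})\text{-}\nlim_\textbf{n}\textbf{x}_\textbf{n}=\textbf{0}$.

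The main obstacle I anticipate is bookkeeping the ``sign-flipping on a partition'' step so that it genuinely lands inside $\nco_{\sigma,\textbf{R}}[\textbf{x}_\textbf{n}+\textbf{y}_\textbf{n}]\sqcup\nco_{\sigma,\textbf{R}}[-\textbf{x}_\textbf{n}+\textbf{y}_\textbf{n}]$: classically one needs $\limsup\textbf{c}^*(\pm\textbf{x}_\textbf{n})>0$ on a set of positive measure, but whether $+$ or $-$ works can depend on $\omega$, so the failure of $\sigma(\textbf{E},\textbf{C})$-convergence must be decomposed along a partition $\{a^+,a^-\}\in p(a)$ and the two pieces handled separately, then glued by stability of the conditional $\nsup$ and of $\nco_{\sigma,\textbf{R}}$. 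A secondary technical point is confirming that the conditional $\nlimsup$ commutes with the conditional $\nsup$ over $\textbf{B}$ in the direction needed (only one inequality is used and it is the easy one, but it must be stated), and that conditional convergence $\sigma(\textbf{E},\textbf{B})\text{-}\nlim\textbf{z}_\textbf{n}=\textbf{0}$ really forces $\nlimsup_\textbf{n}\textbf{x}^*(\textbf{z}_\textbf{n})=\textbf{0}$ for each fixed $\textbf{x}^*\in\textbf{B}$ — which is just the characterization of conditional limits in $\textbf{R}$ recalled earlier via $\nlimsup=\nliminf$.
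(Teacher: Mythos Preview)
Your treatment of the first part is exactly the paper's: both invoke Theorem~\ref{thm: Simons' sup-limsup} with the evaluation functionals $\textbf{f}_\textbf{n}=\textbf{j}(\textbf{x}_\textbf{n})$ on $\textbf{C}$, and the paper says nothing more than ``consequence of Theorem~\ref{thm: Simons' sup-limsup}''.

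For the consequence, your contradiction argument is correct, but the paper takes a shorter direct route that completely sidesteps the sign-flipping-on-a-partition bookkeeping you anticipate as the main obstacle. Instead of supposing failure at some $\textbf{c}^*$ and splitting into $a^+,a^-$, the paper simply fixes an arbitrary $\textbf{x}^*\in\textbf{C}$ and bounds the two sides separately: applying the first part to $\{\textbf{x}_\textbf{n}+\textbf{y}_\textbf{n}\}$ gives
\[
\nlimsup_\textbf{n}\textbf{x}^*(\textbf{x}_\textbf{n})=\nlimsup_\textbf{n}\textbf{x}^*(\textbf{x}_\textbf{n}+\textbf{y}_\textbf{n})\leq\underset{\textbf{b}^*\in\textbf{B}}\nsup\,\nlimsup_\textbf{n}\textbf{b}^*(\textbf{x}_\textbf{n}+\textbf{y}_\textbf{n})=\textbf{0},
\]
and applying it to $\{-\textbf{x}_\textbf{n}+\textbf{y}_\textbf{n}\}$ gives $\nliminf_\textbf{n}\textbf{x}^*(\textbf{x}_\textbf{n})=-\nlimsup_\textbf{n}\textbf{x}^*(-\textbf{x}_\textbf{n}+\textbf{y}_\textbf{n})\geq\textbf{0}$. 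Hence $\nlim_\textbf{n}\textbf{x}^*(\textbf{x}_\textbf{n})=\textbf{0}$ for every $\textbf{x}^*\in\textbf{C}$. The point is that treating $\nlimsup$ and $\nliminf$ as two separate applications of the first part---one with $+\textbf{x}_\textbf{n}$ and one with $-\textbf{x}_\textbf{n}$---makes the $\sqcup$ in the hypothesis do all the work without any partition gymnastics. Your route reaches the same conclusion and is a legitimate alternative, but the direct $\nlimsup/\nliminf$ sandwich is both shorter and avoids the subtlety you flagged.
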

\begin{proof}

First part is a consequence of Theorem \ref{thm: Simons' sup-limsup}.  

For the second part, let us fix $\textbf{x}^*\in\textbf{C}$, then
 
\[
\nlimsup_\textbf{n} \textbf{x}^*(\textbf{x}_\textbf{n})=\nlimsup_\textbf{n} \textbf{x}^*(\textbf{x}_\textbf{n}+\textbf{y}_\textbf{n}) \leq \sigma (\textbf{E},\textbf{B})-\underset{\textbf{n}}\nlim (\textbf{x}_\textbf{n}+\textbf{y}_\textbf{n})=\sigma (\textbf{E},\textbf{B})-\underset{\textbf{n}}\nlim \textbf{x}_\textbf{n} =\textbf{0}. 
\] 
On the other hand, %since $\textbf{C}=-\textbf{C}$
\[
\nliminf_\textbf{n} \textbf{x}^*(\textbf{x}_\textbf{n})=-\nlimsup_\textbf{n} \textbf{x}^*(-\textbf{x}_\textbf{n})=-\nlimsup_\textbf{n} [\textbf{x}^*(-\textbf{x}_\textbf{n}+\textbf{y}_\textbf{n})]\geq 
\]
\[
 \geq \sigma (\textbf{E},\textbf{B})-\underset{\textbf{n}}\nlim (\textbf{x}_\textbf{n}-\textbf{y}_\textbf{n} )=\sigma (\textbf{E},\textbf{B})-\underset{\textbf{n}}\nlim \textbf{x}_\textbf{n}=\textbf{0}.  
\]
Then, $\nlim_\textbf{n} \textbf{x}^*(\textbf{x}_\textbf{n})=\textbf{0}$, and, since $\textbf{x}^*$ is arbitrary, we conclude that $\sigma (\textbf{E},\textbf{C})-\underset{\textbf{n}}\nlim \textbf{x}_\textbf{n} =\textbf{0}$.
\end{proof}
Let us introduce some terminology:

\begin{defn}
Let $\textbf{E}$ be a conditional vector space. The conditional effective domain of a conditional function $\textbf{f}:\textbf{E}\rightarrow\overline{\textbf{R}}$ is denoted by $\dom(\textbf{f}):=[\textbf{x}\:;\:\textbf{f}(\textbf{x})\in\textbf{R}]$. The conditional epigraph of $\textbf{f}$ is denoted by $\epi(\textbf{f}):=[(\textbf{x},\textbf{r})\in\textbf{E}\Join\textbf{R}\:;\:\textbf{f}(\textbf{x})\leq\textbf{r}]$. The conditional function $\textbf{f}$ is said to be proper if $\textbf{f}(\textbf{x})>-\infty$ for all $\textbf{x}\in\textbf{E}$ and there exists some $\textbf{x}\in\dom(\textbf{f})$.
\end{defn}

\begin{thm}\label{thm: James}[Conditional unbounded version of James' Theorem]
Let $(\textbf{E},\nVert\cdot\nVert)$ be a conditional Banach space such that its conditional dual unit ball $\textbf{B}_{\textbf{E}^*}$ is conditionally $\omega^*$-convex block compact and let $\textbf{f}: \textbf{E}\rightarrow \overline{\textbf{R}}$ be a conditionally proper  function such that 
\[
\textnormal{ for all } \textbf{x}^* \in\textbf{E}^*\textnormal{, there is a }\textbf{x}_0\in\textbf{E}\textnormal{ so that }  \textbf{x}^*(\textbf{x}_0) - \textbf{f}(\textbf{x}_0)=\nsup \left[ \textbf{x}^*(\textbf{x}) - \textbf{f}(\textbf{x}) \:;\: \textbf{x}\in\textbf{E}\right], 
\]
then for every conditional real number $\textbf{y}$, the conditional sublevel set $\textbf{V}_\textbf{f}(\textbf{y}):=\left[\textbf{z}\:;\: \textbf{f}(\textbf{z})\leq\textbf{y}\right]$ is conditionally weakly relatively compact.
\end{thm}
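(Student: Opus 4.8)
The strategy is to reduce the statement to the conditional Rainwater–Simons theorem (Theorem \ref{thm: Rainwater}) combined with the conditional Eberlein–Šmulian–type criterion packaged in Lemma \ref{lem: weakCompact}. The starting observation is that the hypothesis — every $\textbf{x}^*\in\textbf{E}^*$ attains its supremum over $\textbf{E}$ in the perturbed functional $\textbf{x}^*-\textbf{f}$ — is precisely the ``every conditionally convex combination attains its sup'' condition needed to run a sup-limsup argument, once we pass to the sublevel set $\textbf{C}:=\textbf{V}_\textbf{f}(\textbf{y})$. One first fixes a conditional real number $\textbf{y}$ and sets $\textbf{K}:=\textbf{V}_\textbf{f}(\textbf{y})$. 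A preliminary step is to check that $\textbf{K}$ is conditionally bounded: if not, one extracts (on the relevant part of the algebra) a conditional sequence $\{\textbf{x}_\textbf{n}\}$ in $\textbf{K}$ with $\nVert\textbf{x}_\textbf{n}\nVert$ conditionally unbounded, and, using the conditional Hahn–Banach theorem together with the attainment hypothesis, derives a contradiction — this is the classical ``James implies boundedness'' step transcribed to the conditional setting. As usual one may consistently argue on the part of $\A$ where a putative bad behaviour occurs, so w.l.o.g. boundedness can be assumed; hence $\textbf{K}$ is a conditionally bounded conditional subset on $\textbf{1}$.

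With $\textbf{K}$ conditionally bounded, the core of the proof is to verify the hypothesis of Lemma \ref{lem: weakCompact}: namely, that every conditional sequence $\{\textbf{x}^*_\textbf{n}\}$ in $\textbf{E}^*$ with $\sigma(\textbf{E}^*,\textbf{E})-\nlim_\textbf{n}\textbf{x}^*_\textbf{n}=\textbf{0}$ also satisfies $\sigma(\textbf{E}^*,\overline{\textbf{j}(\textbf{K})}^{\sigma(\textbf{E}^{**},\textbf{E}^*)})-\nlim_\textbf{n}\textbf{x}^*_\textbf{n}=\textbf{0}$. This is where Theorem \ref{thm: Rainwater} enters, with the roles of $\textbf{E}$ and $\textbf{E}^*$ interchanged: one takes the conditional subset of $\textbf{E}^{**}$ given by $\textbf{B}:=\textbf{j}(\textbf{E})$ sitting inside $\textbf{C}':=\overline{\textbf{j}(\textbf{K})}^{\sigma(\textbf{E}^{**},\textbf{E}^*)}$, and applies the perturbed (second) part of the Rainwater–Simons theorem to the conditionally bounded sequence $\{\textbf{x}^*_\textbf{n}\}$ in $\textbf{E}^*$. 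The key point is to check the attainment condition required there: for every conditional element in the relevant conditional convex hull $\nco_{\sigma,\textbf{R}}[\pm\textbf{x}^*_\textbf{n}+\textbf{y}^*_\textbf{n}]$ (here the perturbing sequence $\{\textbf{y}^*_\textbf{n}\}$ is the original $\{\textbf{x}^*_\textbf{n}\}$ itself, which is $\sigma(\textbf{E}^*,\textbf{j}(\textbf{E}))$-null by hypothesis), one must produce an element of $\textbf{B}=\textbf{j}(\textbf{E})$ attaining the supremum over $\textbf{C}'=\overline{\textbf{j}(\textbf{K})}^{w^*}$. Since the supremum of a conditionally continuous linear functional over a conditional $\sigma(\textbf{E}^{**},\textbf{E}^*)$-closure equals its supremum over the original set $\textbf{j}(\textbf{K})$, and since an element of the relevant conditional convex hull is, up to the null perturbation, of the form $\sum\textbf{r}_\textbf{n}\textbf{x}^*_\textbf{n}\in\textbf{E}^*$, the attainment reduces exactly to: every $\textbf{x}^*\in\textbf{E}^*$ attains $\nsup[\textbf{x}^*(\textbf{x});\textbf{x}\in\textbf{K}]=\nsup[\langle\textbf{j}(\textbf{x}),\textbf{x}^*\rangle;\textbf{x}\in\textbf{K}]$ at some point of $\textbf{K}$. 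This last fact is a consequence of the James-type hypothesis on $\textbf{f}$: since $\textbf{f}\leq\textbf{y}$ on $\textbf{K}$ and the perturbed supremum $\nsup[\textbf{x}^*(\textbf{x})-\textbf{f}(\textbf{x});\textbf{x}\in\textbf{E}]$ is attained, one checks (on $\dom(\textbf{f})$, splitting $\A$ as needed) that the attainer lies in a sublevel set and transfers attainment to $\textbf{K}$; a standard conditional convexity/separation manipulation handles the passage from ``perturbed by $\textbf{f}$'' to ``restricted to $\textbf{V}_\textbf{f}(\textbf{y})$''.

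Once the hypothesis of Lemma \ref{lem: weakCompact} is verified, the lemma — whose other standing assumption, conditional $\omega^*$-convex block compactness of $\textbf{B}_{\textbf{E}^*}$, is exactly what we assumed — yields that $\textbf{K}=\textbf{V}_\textbf{f}(\textbf{y})$ is conditionally weakly relatively compact, which is the conclusion. The main obstacle, I expect, is the bookkeeping in the attainment verification of the previous paragraph: one must carefully match the conditional convex hull $\nco_{\sigma,\textbf{R}}[\pm\textbf{x}^*_\textbf{n}+\textbf{y}^*_\textbf{n}]$ appearing in Theorem \ref{thm: Rainwater} with convex combinations of the $\textbf{x}^*_\textbf{n}$ in $\textbf{E}^*$, keep track of the conditional supremum commuting with the $\sigma(\textbf{E}^{**},\textbf{E}^*)$-closure, and — most delicately — convert the ``$\textbf{x}^*$ attains its perturbed sup over all of $\textbf{E}$'' hypothesis into ``$\textbf{x}^*$ attains its sup over the sublevel set $\textbf{V}_\textbf{f}(\textbf{y})$'', which requires a conditional convex-analysis argument (essentially: if the perturbed supremum is finite and attained, rescaling/translating shows the attainer can be chosen with $\textbf{f}$-value below any prescribed conditional threshold), together with the usual device of exhausting $\A$ by the parts on which each step is valid. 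The boundedness reduction and all limit manipulations are routine transcriptions of the classical arguments once the conditional sup-limsup machinery of Theorem \ref{thm: Simons' sup-limsup} is in hand.
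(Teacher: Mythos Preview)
Your overall architecture --- bound $\textbf{K}=\textbf{V}_\textbf{f}(\textbf{y})$, then verify the hypothesis of Lemma~\ref{lem: weakCompact} via Theorem~\ref{thm: Rainwater} --- is exactly the paper's. The difference, and the genuine gap in your proposal, is in how the attainment hypothesis of Theorem~\ref{thm: Rainwater} is checked.

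You propose to work directly in $\textbf{E}^{**}$ with $\textbf{B}=\textbf{j}(\textbf{E})$ (or $\textbf{j}(\textbf{K})$) and $\textbf{C}'=\overline{\textbf{j}(\textbf{K})}^{w^*}$, and you say the attainment condition ``reduces exactly to: every $\textbf{x}^*\in\textbf{E}^*$ attains $\nsup[\textbf{x}^*(\textbf{x});\textbf{x}\in\textbf{K}]$ at some point of $\textbf{K}$''. You then claim this follows from the perturbed hypothesis by ``a standard conditional convexity/separation manipulation'', asserting in particular that ``rescaling/translating shows the attainer can be chosen with $\textbf{f}$-value below any prescribed conditional threshold''. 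This is false. For a smooth coercive $\textbf{f}$ (think of $\textbf{f}(\textbf{x})=\nVert\textbf{x}\nVert^2$) the maximizer of $\textbf{x}^*-\textbf{f}$ has $\textbf{f}$-value roughly $\nVert\textbf{x}^*\nVert^2/\textbf{4}$; there is no way to push it into a fixed sublevel set $\textbf{V}_\textbf{f}(\textbf{y})$. So the passage from ``$\textbf{x}^*-\textbf{f}$ attains its sup over $\textbf{E}$'' to ``$\textbf{x}^*$ attains its sup over $\textbf{V}_\textbf{f}(\textbf{y})$'' simply does not hold, and without it your application of Theorem~\ref{thm: Rainwater} collapses. (There is also a set-up issue: Theorem~\ref{thm: Rainwater} requires $\textbf{B}\sqsubset\textbf{C}$, but $\textbf{j}(\textbf{E})$ is not contained in $\overline{\textbf{j}(\textbf{K})}^{w^*}$.)

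The paper's device to get around this is to lift everything to the product $\textbf{E}\Join\textbf{R}$ and work with the \emph{epigraph}. One sets $\textbf{B}:=\epi(\textbf{f})\sqsubset\textbf{C}:=\overline{\textbf{B}}^{\sigma(\textbf{E}^{**}\Join\textbf{R},\textbf{E}^*\Join\textbf{R})}$. For any $(\textbf{x}^*,\textbf{l})$ with $\textbf{l}<\textbf{0}$, the pairing $\langle(\textbf{x}^*,\textbf{l}),(\textbf{x},\textbf{r})\rangle=\textbf{x}^*(\textbf{x})+\textbf{l}\textbf{r}$ attains its sup over $\textbf{B}$ (hence over $\textbf{C}$) at $(\textbf{x}_0,\textbf{f}(\textbf{x}_0))$, where $\textbf{x}_0$ is the maximizer of $\textbf{x}^*/\textbf{l}-\textbf{f}$ provided by the hypothesis; no ``attainer lies in a sublevel set'' claim is needed. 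One then applies Theorem~\ref{thm: Rainwater} in $\textbf{E}^*\Join\textbf{R}$ to the sequence $(\textbf{x}^*_\textbf{n},-\textbf{1}/\textbf{n})$, split as $(\textbf{x}^*_\textbf{n},\textbf{0})+(\textbf{0},-\textbf{1}/\textbf{n})$: the second summand is $\sigma(\cdot,\textbf{C})$-null and every relevant conditional convex combination of the sum has strictly negative second coordinate, so the attainment just established applies. The conclusion $\sigma(\textbf{E}^*\Join\textbf{R},\textbf{C})\text{-}\nlim(\textbf{x}^*_\textbf{n},\textbf{0})=\textbf{0}$ then restricts to $\overline{\textbf{K}}^{\omega^*}\Join[\textbf{y}_0]\sqsubset\textbf{C}$, which is exactly the input to Lemma~\ref{lem: weakCompact}. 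The epigraph trick is the missing idea in your proposal.
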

\begin{proof}
Let us fix a conditional real number $\textbf{y}_0$. Let us put $\textbf{K}:=\textbf{V}_{\textbf{f}} (\textbf{y}_0)$. Suppose that $\textbf{K}$ is on $a$. If $a=0$, we are done. If $a>0$, we can suppose $a=1$ by consistently arguing on $a$. 

The conditional uniform boundedness principle (\cite[Theorem 3.4]{key-38})  and the optimization assumption on $\textbf{f}$ imply that $\textbf{K}$ is conditionally bounded. In order to obtain the conditional relative weak compactness of $\textbf{K}$ we apply Lemma \ref{lem: weakCompact}. Thus, let us consider a conditional sequence $\{\textbf{x}^*_\textbf{n}\}$ in $\textbf{E}^*$ such that $\sigma(\textbf{E}^{*},\textbf{E})-\nlim_{\textbf{n}} \textbf{x}^*_\textbf{n} = \textbf{0}$  and let us show that ${\sigma ( \textbf{E}^{*}, \overline{\textbf{K}}^{\omega^*}  )}-\nlim_{\textbf{n}} \textbf{x}^*_\textbf{n} = \textbf{0}$.

Let us fix $(\textbf{x}^*,\textbf{l})\in \textbf{E}^* \Join \textbf{R}^{--}$. By assumption, we have $\textbf{x}_0\in\textbf{E}$ such that 
\[
\textbf{x}^*(\textbf{x}_0){\textbf{l}}^{-1} - \textbf{f}(\textbf{x}_0)=\nsup \left[ \textbf{x}^*(\textbf{x}){\textbf{l}}^{-1} - \textbf{f}(\textbf{x}) \:;\: \textbf{x}\in \textbf{E}\right]. 
\]

Let us define $\textbf{B}:=\epi(\textbf{f})\sqsubset \textbf{C}:=\overline{\textbf{B}}^{\sigma(\textbf{E}^{**}\Join\textbf{R},\textbf{E}^{*}\Join\textbf{R})}$.

We claim that 
\[
\nsup\left[ \langle (\textbf{x}^*, \textbf{l}),(\textbf{x},\textbf{y}) \rangle \:;\: (\textbf{x},\textbf{y})\in\textbf{B}\right]=\langle (\textbf{x}^*, \textbf{l}),(\textbf{x}_0,\textbf{f}(\textbf{x}_0)) \rangle,
\] 
where $\langle (\textbf{x}^*, \textbf{l}),(\textbf{x},\textbf{y}) \rangle := \textbf{x}^*(\textbf{x})+\textbf{l}\textbf{y}$ for $(\textbf{x}^*,\textbf{l},\textbf{x},\textbf{y})\in\textbf{E}^*\Join\textbf{R}\Join\textbf{E}\Join\textbf{R}$.

Indeed, define $\textbf{z}^*:=\textbf{x}^* / \textbf{l}$. Then,
\[
\nsup\left[ \langle (\textbf{x}^*, \textbf{l}),(\textbf{x},\textbf{y}) \rangle \:;\: (\textbf{x},\textbf{y})\in\textbf{B}\right\}\leq -\textbf{l} \underset{(\textbf{x},\textbf{y})\in \epi(\textbf{g})} \nsup \left( \textbf{z}^*(\textbf{x}) - \textbf{y} \right) \leq
\] 
\[
\leq -\textbf{l} \underset{\textbf{x}\in\dom(\textbf{f})} \nsup \left( \textbf{z}^*(\textbf{x}) - \textbf{f}(\textbf{x}) \right) \leq -\textbf{l} \underset{\textbf{x}\in\textbf{E}} \nsup \left( \textbf{z}^*(\textbf{x}) - \textbf{f}(\textbf{x}) \right)= 
\]
\[
=-\textbf{l}  \left( \textbf{z}^*(\textbf{x}_0) - \textbf{f}(\textbf{x}_0) \right) = \textbf{x}^*(\textbf{x}_0)+\textbf{l} \textbf{f}(\textbf{x}_0).
\]
Note that $\textbf{x}_0\in\dom (\textbf{f})$ as $\textbf{f}$ is conditionally proper.

Moreover, since $\langle (\textbf{x}^*, \textbf{l}), \cdot \rangle :  \textbf{E}^{**}\Join \textbf{R} \rightarrow \textbf{R}$ is conditionally $\sigma(E^{**}\Join \textbf{R}, \textbf{E}^*\Join \textbf{R})$-continuous, it holds

\[
\nsup\left[ \langle (\textbf{x}^*, \textbf{l}),(\textbf{x},\textbf{y}) \rangle \:;\: (\textbf{x},\textbf{y})\in\textbf{C}\right]=
\]
\[
=\nsup\left[ \langle (\textbf{x}^*, \textbf{l}),(\textbf{x},\textbf{y}) \rangle \:;\: (\textbf{x},\textbf{y})\in\textbf{B}\right]=\textbf{x}^*(\textbf{x}_0)+\textbf{l} \textbf{f}(\textbf{x}_0).
\]

Now, let us consider the conditionally bounded sequence

\[
\left\{ \left( \textbf{x}^*_\textbf{n} , -\frac{\textbf{1}}{\textbf{n}}\right) \right\}_{\textbf{n}\in\textbf{N}} \textnormal{ in }\textbf{E}^*\Join\textbf{R}.
\]

It is clear that $\sigma(\textbf{E}^*\Join\textbf{R},\textbf{B})-\nlim_\textbf{n} (\textbf{x}^*_\textbf{n},\textbf{0})=\textbf{0}$ and $\sigma(\textbf{E}^*\Join\textbf{R},\textbf{C})-\nlim_\textbf{n} (\textbf{0},\frac{-1}{\textbf{n}})=\textbf{0}$. Then, in view of Theorem \ref{thm: Rainwater}, we obtain 
\[
\sigma(\textbf{E}^*\Join\textbf{R},\textbf{C})-\nlim_\textbf{n}  (\textbf{x}^*_\textbf{n},\textbf{0})=\textbf{0},
\]
and, since $\overline{\textbf{K}}^{\omega^*}\Join [\textbf{0}]\sqsubset\textbf{C}$, it follows that $\sigma(\textbf{E}^*,\overline{\textbf{K}}^{\omega^*})-\nlim_\textbf{n}\textbf{x}^*_\textbf{n}=\textbf{0}$, and the proof is complete.

\end{proof}

Finally, as a consequence of theorem above we show a conditional version of the classical  James' compactness Theorem —no conditional function is involved— for conditional Banach spaces with conditionally $\omega^*$-convex block compact dual unit ball.     

\begin{thm}\label{thm: JamesII}[Conditional version of James' Theorem]
Let $(\textbf{E},\nVert\cdot\nVert)$ be a conditional Banach space such that its conditional dual unit ball $\textbf{B}_{\textbf{E}^*}$ is conditionally $\omega^*$-convex block compact and let $\textbf{K}$ be on $1$, conditionally weakly closed subset of $\textbf{E}$.  The conditional set $\textbf{K}$ is conditionally weakly compact if, and only if, for each $\textbf{x}^*\in\textbf{E}^*$ there is $\textbf{x}_0\in\textbf{K}$ such that $\textbf{x}^*(\textbf{x}_0)=\nsup_{\textbf{x} \in\textbf{K}} \textbf{x}^*(\textbf{x})$.   
\end{thm}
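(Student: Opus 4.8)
The plan is to derive Theorem \ref{thm: JamesII} from the unbounded version Theorem \ref{thm: James} by choosing the conditional function $\textbf{f}$ to be the conditional indicator function of $\textbf{K}$. Concretely, I would set $\textbf{f}:=\delta_\textbf{K}$, i.e. the conditional function $\textbf{E}\rightarrow\overline{\textbf{R}}$ with $\textbf{f}(\textbf{x})=\textbf{0}$ if $\textbf{x}\in\textbf{K}$ and $\textbf{f}(\textbf{x})=+\infty$ otherwise; this is a legitimate conditional function because $\textbf{K}$ is a conditional subset on $1$ and hence $\delta_\textbf{K}$ is generated by a stable function $E\rightarrow\bar L^0$, and it is conditionally proper since $\textbf{K}$ is non-empty on $1$.

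First I would record the two easy implications. The ``only if'' direction is immediate: if $\textbf{K}$ is conditionally weakly compact, then every $\textbf{x}^*\in\textbf{E}^*$ is conditionally $\sigma(\textbf{E},\textbf{E}^*)$-continuous, so it attains its conditional supremum on $\textbf{K}$ by the conditional Weierstrass extreme value theorem (conditional compactness plus conditional continuity, as in \cite[Section 3]{key-7}), yielding $\textbf{x}_0\in\textbf{K}$ with $\textbf{x}^*(\textbf{x}_0)=\nsup_{\textbf{x}\in\textbf{K}}\textbf{x}^*(\textbf{x})$. For the ``if'' direction, I observe that the James-type optimization hypothesis translates verbatim: for $\textbf{x}^*\in\textbf{E}^*$ we have $\nsup[\textbf{x}^*(\textbf{x})-\textbf{f}(\textbf{x})\:;\:\textbf{x}\in\textbf{E}]=\nsup[\textbf{x}^*(\textbf{x})\:;\:\textbf{x}\in\textbf{K}]$, and the assumed attainment at some $\textbf{x}_0\in\textbf{K}$ gives exactly $\textbf{x}^*(\textbf{x}_0)-\textbf{f}(\textbf{x}_0)=\textbf{x}^*(\textbf{x}_0)=\nsup[\textbf{x}^*(\textbf{x})-\textbf{f}(\textbf{x})\:;\:\textbf{x}\in\textbf{E}]$, since $\textbf{f}(\textbf{x}_0)=\textbf{0}$. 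Hence the hypothesis of Theorem \ref{thm: James} holds. Applying that theorem with, say, $\textbf{y}=\textbf{0}$, the conditional sublevel set $\textbf{V}_\textbf{f}(\textbf{0})=[\textbf{z}\:;\:\textbf{f}(\textbf{z})\leq\textbf{0}]=\textbf{K}$ is conditionally weakly relatively compact.

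It then remains to upgrade ``conditionally weakly relatively compact'' to ``conditionally weakly compact'', which is where the hypothesis that $\textbf{K}$ is conditionally weakly closed enters: the conditional weak closure of $\textbf{K}$ equals $\textbf{K}$, and a conditional subset whose conditional closure in a conditional topology is conditionally compact and which coincides with that closure is itself conditionally compact. I would invoke here the conditional analogue of the fact that a closed subset of a compact set is compact, together with \cite[Lemma 4.2]{key-38} identifying conditional weak relative compactness with conditional weak compactness of the conditional weak closure; combining these yields that $\textbf{K}$ is conditionally weakly compact. The main obstacle, and the point that needs care, is the bookkeeping with the ambient set $a$ on which things live: as in the proof of Theorem \ref{thm: James}, one should first reduce to the case $a=1$ by consistently arguing on the partition, and check that the indicator construction and all the conditional-topology manipulations are stable and behave well under this localization; none of this is deep, but it is the only non-formal ingredient. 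Everything else is a direct translation through the already-established machinery.
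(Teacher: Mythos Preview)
Your approach is essentially the paper's: apply Theorem \ref{thm: James} to the conditional indicator of $\textbf{K}$, then use that $\textbf{K}$ is conditionally weakly closed to pass from relatively compact to compact. The ``only if'' direction in the paper is argued via the conditional image $\textbf{x}^*(\textbf{K})$ being conditionally compact in $\textbf{R}$ and then conditional Heine--Borel, which amounts to your Weierstrass argument.

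One point deserves care. Your description ``$\textbf{f}(\textbf{x})=\textbf{0}$ if $\textbf{x}\in\textbf{K}$ and $+\infty$ otherwise'' is \emph{not} a stable function as written: if $x=x_1|a_1+x_2|a_2$ with $x_1\in K$, $x_2\notin K$, the naive indicator gives $+\infty$ (since typically $x\notin K$) rather than $0|a_1+\infty|a_2$. The paper fixes this by setting, for each $x\in E$, $b:=\vee\{a\in\A\:;\:x|a\in\textbf{K}\}$ and $f(x):=1|b+\infty|b^c$; this is the genuine stable function underlying the conditional indicator, and with it $\textbf{V}_\textbf{f}(\textbf{1})=\textbf{K}$ and the hypothesis of Theorem \ref{thm: James} is verified exactly as you indicate. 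You flag this ``bookkeeping'' yourself, so the gap is more one of presentation than of substance, but the explicit formula for $b$ is the missing ingredient that makes your $\delta_\textbf{K}$ a legitimate conditional function.
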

\begin{proof}
Let $\textbf{K}$ be on $1$ and  conditionally weakly compact. For each $\textbf{x}^*\in\textbf{E}^*$, due to Proposition 3.26 of \cite{key-7}, we know that $\textbf{x}^*(\textbf{K})$ is a conditionally compact subset of $\textbf{R}$. Therefore, by the conditional version of the Heine-Borel theorem (see \cite[Theorem 5.5.8]{key-40}), $\textbf{x}^*(\textbf{K})$ is conditionally closed and bounded. From this fact, it can be showed that $\textbf{x}^*$ attains a conditional maximum on $\textbf{K}$.

For the converse, let us consider the conditional function $\textbf{f}:\textbf{E}\rightarrow\overline{\textbf{R}}$ defined as follows: For $\textbf{x}\in\textbf{E}$ we define the stable function $f(x):=1|b + \infty|b^c$, with $b:=\vee\left\{a\in\A\:;\: x|a\in\textbf{K}\right\}$. Then, $\textbf{V}_\textbf{f}(\textbf{1})=\textbf{K}$ and $\textbf{f}$ satisfies the hypothesis of Theorem \ref{thm: James}. We conclude that $\textbf{K}$ is conditionally weakly compact.   
\end{proof}

The following example exhibits how the latter result applies:

\begin{example}
Let $(\Omega,\EE,\PP)$ be a probability space and $\F\subset\EE$ a sub-$\sigma$-algebra. Let us define $\mathcal{P}_\F:=\left\{Q \ll \PP\:;\:Q|_{\F}=\PP|_{\F}\right\}$. This set defines a conditional set. Indeed, for $a\in\A$ and $Q\in\mathcal{P}_\F$ we define $Q|a:=Q(\cdot|A)$, where $A$ is some representative of the equivalence class $a$ (note that the definition does not depend on the choice of $A$). For a given  countable family $\{Q_k\}$ in $\mathcal{P}_\F$ and $\{a_k\}\in p(1)$ we take $\sum_{k} Q_k|a_k:=\sum_k Q_k(\cdot|A_k)Q_k(A_k)$ where $A_k\in\F$ is a representative of $a_k$ (again there is no dependence on representatives). An easy verification shows that $\sum_{k} Q_k|a_k\in \mathcal{P}_\F$. 

Let us take a stable subset $S$ of $\mathcal{P}_\F$.

It can be shown by inspection that the set of Radon-Nykodim derivatives $K=\left\{\frac{d Q}{d\PP}\:;\: Q\in S \right\}$ is a stable subset of $L^1_\F(\EE)$. Let us suppose that $\textbf{K}$ is conditionally bounded and weakly closed. Then we claim that, for each $x\in L^\infty(\EE)$, the supremum
\begin{equation}
\label{sup1}
\esssup \left\{ \E_Q[x|\F] \:; \: Q\in S\right\}
\end{equation}
is attained if, and only if, $\textbf{K}$ is conditionally weakly compact.

For each $Q\in\mathcal{P}_\F$ we have $\E_Q[x|\F]=\E_\PP[x \frac{d Q}{d\PP}|\F]$.  Thus we have the following equality: 
\begin{equation}
\label{sup2}
\left\{ \E_Q[x|\F] \:; \: Q\in S\right\}=\left\{ \E_\PP\left[x y|\F\right] \:;\: y\in K\right\}.
\end{equation}
%\left\{ \E_\PP[-x y|\F] ; \:y\in L^1_\F(\EE),y\geq 0, \E_\PP\left[y|\F\right]=1\right\}.
Besides, for $Q\in\mathcal{P}_\F$ we have $\E_\PP[\frac{d Q}{d\PP}|\F]=1$. This means that $\textbf{K}$ is also conditionally bounded. Also, by Example \ref{exam: Lp}, we know that $(\textbf{L}^1_\F(\EE))^*=\textbf{L}^\infty_\F(\EE)$. Further, we will show in Lemma \ref{lem: WCG} that $\textbf{L}^\infty_\F(\EE)$ has conditionally $\omega^*$-convex block compact unit ball. Then, by Corollary \ref{thm: JamesII}, we conclude that for each $x\in L^\infty_\F(\EE)$, the supremum (\ref{sup1}) is attained if, and only if, $\textbf{K}$ is conditionally weakly compact.

Finally, since every $x\in L^\infty_\F(\EE)$ is of the form $x=y x_0$ with $y\in L^0_+(\F)$ and $x_0\in L^\infty(\EE)$, it follows that the supremum (\ref{sup1}) is attained for every $x\in L^\infty(\EE)$ if, and only if, $\textbf{K}$ is conditionally weakly compact.

\end{example}

%
%\begin{example}
%Let us consider the space $((L^0)^d,\langle\cdot\rangle)$ (see \cite{key-19}), which naturally defines a conditional Banach space. We have that $[(L^0)^d]^*=(L^0)^d$. Then, Theorem \ref{thm: JamesII} tells us that  
%
%\end{example}

\subsection{An application to conditional risk measures}

 In what follows we will show an application of the conditional James' compactness theorem. Namely, we will obtain a conditional version of the so-called Jouini-Schachermayer-Touzi theorem for  convex risk measures in a conditional setting. For this purpose, it is important to keep in mind the duality relation shown in Example \ref{exam: Lp}. Hereafter, we fix $(\Omega,\mathcal{E},\PP)$ a probability space and $\F$ a sub-$\sigma$-algebra of $\mathcal{E}$. Since $\F,\mathcal{E}$ are fixed,  throughout we will use the notation $\textbf{L}^p:=\textbf{L}^p_\F(\mathcal{E})$.

We will study the following type of risk measures, which are defined in the classical setting:

\begin{defn}
\label{defn: condRiskMeasure}
%[...]
A function $\rho : L_{\mathcal{F}}^\infty\left(\mathcal{E}\right) \rightarrow L^0(\F)$ is called a convex risk measure if $\rho$ is :
\begin{enumerate}
	\item monotone, i.e. if $x\leq y$ then $\rho(x)\geq\rho(y)$; 
  \item $L^0(\F)$-cash invariant, i.e. if $(\eta,x) \in L^0(\F)\times{L}_{\mathcal{F}}^\infty\left(\mathcal{E}\right)$, then $\rho(x+\eta)=\rho(x)-\eta$;
  \item convex, i.e. $\rho(r x + (1-r) y)\leq r \rho(x) + (1-r) \rho(y)$ for all $r\in \R$ with $0\leq r \leq 1$ and $x,y\in L_{\mathcal{F}}^{\infty}\left(\mathcal{E}\right)$.
	%
%We also define $\mathcal{A}_\rho:=\left\{ {x}\in  \textbf{L}_{\mathcal{F}}^{p} ; \rho(\textbf{X})\leq 0|\sigma(\textbf{X}) \right\}$ the acceptance set.

Its Fenchel conjugate is defined by:
\[
\begin{array}{cc}
\rho^*(y):=\esssup\{\E_\PP[x y|\F]-\rho(x) \:;\: x\in L^\infty_\F(\EE) \} & \textnormal{ for }y\in L^1_\F(\EE).%\textnormal{, and }
\end{array}
\]
\end{enumerate}
\end{defn}

Filipovic et al.\cite{key-10} proposed to study risk measures that are $L^0(\F)$-convex. We would like to emphasize that we are considering the weaker assumption of convexity in the traditional sense.  Nevertheless, we have the following result:
  % (see \cite[Theorem 3.2]{key-10}). 

%Note that $\rho^*$ is $L^0$-convex and stable as well.

\begin{lem}
\label{lem: cont}

Any convex risk measure $\rho:L^\infty_\F(\EE)\rightarrow L^0(\F)$ generates a conditional function $\boldsymbol{\rho} : \textbf{L}^\infty\rightarrow \textbf{R}$, which is:
\begin{enumerate}
 
	\item conditionally monotone, i.e. if $\textbf{x}\leq \textbf{y}$ then $\boldsymbol\rho(\textbf{x})\geq\boldsymbol\rho(\textbf{y})$;
	\item conditionally cash invariant, i.e. if $(\textbf{r},\textbf{x}) \in \textbf{R}\Join\textbf{L}^{p}$, then $\boldsymbol\rho\textbf{(x}+\textbf{r})=\boldsymbol\rho(\textbf{x})-\textbf{r}$;
	\item conditionally convex, i.e. $\boldsymbol\rho(\textbf{r} \textbf{x} + (\textbf{1}-\textbf{r}) \textbf{y})\leq \textbf{r} \boldsymbol\rho(\textbf{x}) + (\textbf{1}-\textbf{r}) \boldsymbol\rho(\textbf{y})$ for all $\textbf{r}\in \textbf{R}$ with $\textbf{0}\leq \textbf{r} \leq \textbf{1}$ and $\textbf{x},\textbf{y}\in \textbf{L}^{\infty}$;
	\item conditionally Lipschitz continuous, i.e. $|\boldsymbol\rho(\textbf{x}) − \boldsymbol\rho(\textbf{y})| \leq\boldsymbol\Vert \textbf{x} − \textbf{y}\textbf{}\Vert_\infty$, for all $\textbf{x},\textbf{y}\in\textbf{L}^\infty$.
\end{enumerate}

Moreover, $\rho^*:L^1_\F(\mathcal{E})\rightarrow L^0(\F)$ is also stable and generates a conditional function $\boldsymbol\rho^*:\textbf{L}^1\rightarrow \textbf{R}$, which satisfies:
\[
\begin{array}{cc}
\boldsymbol{\rho}^*(\textbf{y}):=\nsup\left[\E_\PP[\textbf{x} \textbf{y}|\F]-\boldsymbol{\rho}(\textbf{x}) \:;\: \textbf{x}\in\textbf{L}^\infty \right] & \textnormal{ for }\textbf{y}\in\textbf{L}^1. %\textnormal{, and }
\end{array}
\]
\end{lem}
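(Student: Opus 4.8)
The plan is to reduce the whole statement to one fact, namely that $\rho$ is \emph{stable}, i.e.\ $\rho(\sum 1_{a_k}x_k)=\sum 1_{a_k}\rho(x_k)$ for every $\{a_k\}\in p(1)$; once this is known, $\rho$ and $\rho^*$ generate conditional functions via the equivalence $\psi$ of Theorem \ref{thm: conection} (cf.\ Example \ref{exam: Lp}), and the remaining properties are obtained by reading the defining (in)equalities conditionally. We would first establish the Lipschitz bound $|\rho(x)-\rho(y)|\le\Vert x-y|\F\Vert_\infty$: putting $z:=\Vert x-y|\F\Vert_\infty\in L^0_+(\F)$ (finite because $x,y\in L^0(\F)L^\infty(\EE)$), the inequality $x\le y+z$ together with monotonicity and $L^0(\F)$-cash invariance gives $\rho(x)\ge\rho(y+z)=\rho(y)-z$, and symmetrically. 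Locality then follows at once: for $a\in\A$ and $w\in L^\infty_\F(\EE)$ set $y:=1_ax+1_{a^c}w$; then $|x-y|=1_{a^c}|x-w|$, hence $\Vert x-y|\F\Vert_\infty=1_{a^c}\Vert x-w|\F\Vert_\infty$ since $1_{a^c}$ is $\F$-measurable, so multiplying the Lipschitz bound by $1_a$ yields $1_a\rho(x)=1_a\rho(y)$. Applying this to $x=\sum 1_{a_k}x_k$ (taking $w=x_k$ on $a_k$) gives $1_{a_k}\rho(x)=1_{a_k}\rho(x_k)$ for all $k$, i.e.\ stability of $\rho$.

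The next point is the only genuinely non-formal one: conditional convexity of the conditional function attached to $\rho$ requires $\rho$ to be $L^0(\F)$-convex, whereas only $\R$-convexity is assumed. We would recover it in two steps. For a simple $r=\sum_{i=1}^n 1_{a_i}\lambda_i$ with $\lambda_i\in[0,1]$ and $\{a_i\}$ a finite partition, $rx+(1-r)y=\sum_i 1_{a_i}(\lambda_i x+(1-\lambda_i)y)$, so stability and $\R$-convexity give $\rho(rx+(1-r)y)\le r\rho(x)+(1-r)\rho(y)$; for a general $r\in L^0(\F)$ with $0\le r\le 1$, choose simple $r_n$ with $0\le r_n\le 1$ and $\Vert r_n-r\Vert_\infty\to 0$, note $\Vert(r_n-r)(x-y)|\F\Vert_\infty\le\Vert r_n-r\Vert_\infty\Vert x-y|\F\Vert_\infty\to 0$, and pass to the limit using the Lipschitz bound of the previous paragraph. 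Now $\rho$, being stable between the stable $L^0(\F)$-modules $L^\infty_\F(\EE)$ and $L^0(\F)$ (\cite[Proposition 3.3]{key-37}), generates a conditional function $\boldsymbol\rho:\textbf{L}^\infty\to\textbf{R}$; under the identifications supplied by $\psi$ — conditional elements of $\textbf{L}^\infty$ with elements of $L^\infty_\F(\EE)$, conditional reals with $L^0(\F)$, the conditional order on $\textbf{R}$ with the order of $\PP$-almost sure dominance, and $L^0(\F)\subset L^\infty_\F(\EE)$ for the cash-invariance clause — the four assertions on $\boldsymbol\rho$ are exactly monotonicity, $L^0(\F)$-cash invariance, the $L^0(\F)$-convexity just proved, and the Lipschitz bound, read conditionally.

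For the Fenchel conjugate, we would first verify that $\rho^*$ is stable. Given $y=\sum 1_{b_k}y_k$ and $x\in L^\infty_\F(\EE)$, the $\F$-measurability of $1_{b_k}$ gives $1_{b_k}\E_\PP[xy|\F]=\E_\PP[x\,1_{b_k}y_k|\F]=1_{b_k}\E_\PP[xy_k|\F]$, and since multiplication by $1_{b_k}$ commutes with $\esssup$, $1_{b_k}\rho^*(y)=\esssup_x\bigl(1_{b_k}\E_\PP[xy_k|\F]-1_{b_k}\rho(x)\bigr)=1_{b_k}\rho^*(y_k)$; summing over $k$ yields $\rho^*(\sum 1_{b_k}y_k)=\sum 1_{b_k}\rho^*(y_k)$, so $\rho^*$ generates $\boldsymbol\rho^*:\textbf{L}^1\to\textbf{R}$. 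Finally, $(x,y)\mapsto\E_\PP[xy|\F]$ is $L^0(\F)$-bilinear, hence stable, and together with the stability of $\rho$ this shows that $\{\E_\PP[xy|\F]-\rho(x)\,;\,x\in L^\infty_\F(\EE)\}$ is a stable subset of $L^0(\F)$; under the identification $\overline{\textbf{R}}\cong\bar{L}^0(\F)$ its conditional supremum is precisely its essential supremum, which is the displayed formula $\boldsymbol\rho^*(\textbf{y})=\nsup[\E_\PP[\textbf{x}\textbf{y}|\F]-\boldsymbol\rho(\textbf{x})\,;\,\textbf{x}\in\textbf{L}^\infty]$.

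The main obstacle is the convexity upgrade in the second paragraph: without it, conditional convexity of $\boldsymbol\rho$ would genuinely fail, and it can only be obtained by feeding the (derived) locality and Lipschitz estimates into the simple-function approximation. Everything else is a routine transfer along $\psi$.
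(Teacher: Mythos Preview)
Your proposal is correct and follows essentially the same approach as the paper: derive the Lipschitz bound from monotonicity and cash invariance, deduce stability, upgrade $\R$-convexity to $L^0(\F)$-convexity by approximation with simple coefficients and passage to the limit via the Lipschitz estimate, and obtain stability of $\rho^*$ from that of the conditional expectation. The only differences are cosmetic (you use finitely-valued simple approximants in $L^\infty$-norm, the paper uses countable concatenations of rationals and conditional convergence; you spell out the stability of $\rho^*$ explicitly, the paper dismisses it in one line).
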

\begin{proof}
Let us start by showing that
\begin{equation}
\label{Lipschitz}
\begin{array}{cc}
|\rho(x) − \rho(y)| \leq\Vert x − y |\F \Vert_\infty, \textnormal{ for }x,y\in L^\infty_\F(\EE). 
\end{array}
\end{equation}
Indeed, if $x,y\in L^\infty_\F(\mathcal{E})$, it is clear that $x\leq y+\Vert x-y|\F\Vert_\infty$. Then, by monotonicity and cash-invariance it follows that $\rho(y)-\rho(x)\leq\Vert x-y|\F\Vert_\infty$. By reversing the roles of $x$ and $y$, we obtain (\ref{Lipschitz}).

Now, for $a\in\F$, (\ref{Lipschitz}) yields that 
\[
|1_a \rho(x)-1_a \rho(1_a x)|\leq 1_a\Vert x- 1_a x|\F\Vert_\infty=0.
\]
This means that $\rho$ is stable. Therefore, it generates a conditional function $\boldsymbol\rho:\textbf{L}^\infty\rightarrow \textbf{R}$, which clearly  satisfies 1, 2, and 4. 

Let us prove 3. Indeed, fix $\textbf{x},\textbf{y}\in\textbf{L}^\infty$ and $\textbf{r}\in\textbf{R}$ with $\textbf{0}\leq\textbf{r}\leq\textbf{1}$. Suppose first that $\textbf{r}\in \textbf{s}(\R)$. In this case, the result follows because $\rho$ is convex and stable. Now, suppose that $\textbf{r}$ is arbitrary. Let $H:=\left\{h_1,h_2,...\right\}$ be a countable dense subset of $\R$. For a fixed $n\in\N$, let us define $a_1:=(r\leq h_1<r+\frac{1}{n})$ and $a_k:=(r\leq h_k<r+\frac{1}{n})-\wedge_{i=1}^{k-1} a_i$ for $k>1$. Then, $\{a_k\}\in p(1)$, and we can define a $r_n:=\left(\sum_{k\in\N} h_k|a_k\right)\wedge 1$. Now, for arbitrary $\textbf{n}\in\textbf{N}$ of the form $n=\sum n_i|a_i$ let us put $r_n:=\sum_{i\in\N} r_i|a_i$. Then $\{\textbf{r}_\textbf{n}\}$ is a conditional sequence in $\textbf{s}(\R)$  with $\textbf{0}\leq\textbf{r}_\textbf{n}\leq\textbf{1}$, thus 
\[
\boldsymbol\rho(\textbf{r}_\textbf{n}\textbf{x}+(\textbf{1}-\textbf{r}_\textbf{n})\textbf{y})\leq \textbf{r}_\textbf{n}\boldsymbol\rho(\textbf{x})+(\textbf{1}-\textbf{r}_\textbf{n})\boldsymbol\rho(\textbf{y}).
\]

From 4 we know that $\boldsymbol\rho$ is conditionally continuous. Then, since $\nlim_\textbf{n} \textbf{r}_\textbf{n}=\textbf{r}$, we obtain the result by taking conditional limits.

Finally, let us note that  $\rho^*:L^1_\F(\mathcal{E})\rightarrow L^0(\F)$ is also stable (because the conditional expectation is). Therefore, it defines a conditional function $\boldsymbol\rho^*:\textbf{L}^1\rightarrow \textbf{R}$, and 
\[
\begin{array}{cc}
\boldsymbol{\rho}^*(\textbf{y}):=\nsup\left[\E_\PP[\textbf{x} \textbf{y}|\F]-\boldsymbol{\rho}(\textbf{x}) \:;\: \textbf{x}\in\textbf{L}^\infty \right] & \textnormal{ for }\textbf{y}\in\textbf{L}^1, %\textnormal{, and }
\end{array}
\]
because the conditional order of $\textbf{R}$ is defined in terms of the partial order of $L^0(\F)$ (see remarks after \cite[Definition 2.15]{key-7}).
\end{proof}

A conditional function $\boldsymbol\rho:\textbf{L}^\infty\rightarrow \textbf{R}$ fulfilling the  conditions 1, 2 and 3 above is a \textit{conditional convex risk measure}. In fact, it is the natural extension to the setting of conditional sets of the classical notion of convex risk measure. 

%
%Moreover, we have that $\rho^*:L^1_\F(\mathcal{E})\rightarrow L^0(\F)$ is also stable (because the conditional expectation is). Therefore, it defines a conditional function $\boldsymbol\rho^*:\textbf{L}^1\rightarrow \textbf{R}$. And we have the identity:
%\[
%\begin{array}{cc}
%\boldsymbol{\rho}^*(\textbf{y}):=\nsup\left[\E_\PP[\textbf{x} \textbf{y}|\F]-\boldsymbol{\rho}(\textbf{x}) \:;\: \textbf{x}\in\textbf{L}^\infty \right] & \textnormal{ for }\textbf{y}\in\textbf{L}^1, %\textnormal{, and }
%\end{array}
%\]
%because the conditional order of $\textbf{R}$ is defined in terms of the partial order of $L^0(\F)$ (see remarks after \cite[Definition 2.15]{key-7}).

%%%%%%%%%%%%%%%%%%%%%%%%%%%%%%%%%%%%%%%%%
Two well-known notions for static convex risk measures are the so-called Fatou and Lebesgue properties. Namely, a (static) convex risk measure $\rho:L^\infty\rightarrow\R$ is said to have the Fatou property if for every bounded sequence $\{x_n\}$ in $L^\infty$ which converges a.s to $x$ it follows that $\rho(x)\leq \liminf \rho(x_n)$. If it also happens that $\lim \rho(x_n)=\rho(x)$, then $\rho$ has the Lebesgue property.

We now want to introduce a conditional version for these properties. For this purpose, let us define a conditionally partial order in $\textbf{L}^\infty$. Namely, for given $\textbf{x},\textbf{y}\in\textbf{L}^\infty$ we write $\textbf{x}\leq \textbf{y}$ conditionally almost surely if $x\leq y$ almost surely. This conditional order allows to define the  following notions:

%\begin{defn}
\begin{itemize}
	%\item For given a sequence $\{x_n\}$ in $L^\infty_\F(\EE)$, we define
	%\[
	%\begin{array}{cc}
	%\underset{n}\essliminf x_n=\underset{m\in\N}\esssup\underset{n\geq m}\essinf x_n & (\textnormal{resp. }\underset{n}\esslimsup x_n=\underset{m\in\N}\essinf\underset{n\geq m}\esssup x_n).
	%\end{array}
	%\]

	\item If $\{\textbf{x}_\textbf{n}\}$ is a conditional sequence in $\textbf{L}^\infty$, we define
		\[
	\begin{array}{cc}
	\underset{\textbf{n}}\nliminf \textbf{x}_\textbf{n}=\underset{\textbf{m}\in\textbf{N}}\nsup\:\underset{\textbf{n}\geq \textbf{m}}\ninf \textbf{x}_\textbf{n} & (\textnormal{resp. }\underset{\textbf{n}}\nlimsup \textbf{x}_\textbf{n}=\underset{\textbf{m}\in\textbf{N}}\ninf \:\underset{\textbf{n}\geq \textbf{m}}\nsup \textbf{x}_\textbf{n}).
	\end{array}
	\]
	\item A conditional sequence $\{\textbf{x}_\textbf{n}\}$ in $\textbf{L}^\infty$ is said to \textit{conditionally converge  almost surely} to $\textbf{x}$ if $\underset{\textbf{n}}\nliminf \textbf{x}_\textbf{n}=\underset{\textbf{n}}\nlimsup \textbf{x}_\textbf{n}$.
	\item A conditional convex risk measure $\boldsymbol\rho : \textbf{L}^\infty\rightarrow\textbf{R}$ is said to have the \textit{conditional Fatou property} if for every conditionally bounded sequence $\{\textbf{x}_\textbf{n}\}$ in $\textbf{L}^\infty$ which conditionally  converges a. s. to $\textbf{x}$ it holds that $\boldsymbol\rho(\textbf{x})\leq \underset{\textbf{n}}\nliminf\boldsymbol\rho(\textbf{x}_\textbf{n})$.   
	\item A conditional convex risk measure $\boldsymbol\rho : \textbf{L}^\infty\rightarrow\textbf{R}$ is said to have the \textit{conditional Lebesgue property} if for every conditionally bounded sequence $\{\textbf{x}_\textbf{n}\}$ in $\textbf{L}^\infty$ which conditionally  converges a. s. to $\textbf{x}$ it holds that $\nlim_{\textbf{n}}\boldsymbol\rho(\textbf{x}_\textbf{n})=\boldsymbol\rho(\textbf{x})$. 
\end{itemize}
%\end{defn}

\begin{rem}
\label{rem: ASconvergence}
Note that no conditional topology is involved for the conditional almost sure convergence defined above.

Let $\{x_n\}$ be a sequence in $L^\infty_\F(\EE)$, and let us construct from it the conditional sequence $\{\textbf{x}_\textbf{n}\}$. Then, inspection shows that $\underset{\textbf{n}}\nliminf \textbf{x}_\textbf{n}=(\underset{n}\essliminf x_n)|1$ and $\underset{\textbf{n}}\nlimsup \textbf{x}_\textbf{n}=(\underset{n}\esslimsup x_n)|1$. This means that $\{x_n\}$ converges a.s. to $x$ if, and only if, $\{\textbf{x}_\textbf{n}\}$  conditionally  converges a. s. to $\textbf{x}$.
\end{rem}

We have the following result:

\begin{prop}
\label{prop: equivFatou}
Let $\rho:L^\infty_\F(\EE)\rightarrow L^0(\F)$ be a convex risk measure, then the following are equivalent:

\begin{enumerate}
	\item $\rho|_{L^\infty(\EE)}:L^\infty(\EE) \rightarrow L^0(\F)$ has the Fatou property, i.e. for every bounded sequence $\{x_n\}\subset L^\infty(\EE)$ such that $\lim x_n=x$ a.s. it holds that $\rho(x)\leq \underset{n}\essliminf \rho(x_n)$ (resp. Lebesgue property, i.e. for every bounded sequence $\{x_n\}\subset L^\infty(\EE)$ such that $\lim x_n=x$ a.s. it holds that $\rho(x)=\underset{n}\lim\rho(x_n)$); 
	\item $\boldsymbol\rho : \textbf{L}^\infty\rightarrow\textbf{R}$ has the conditional Fatou property (resp. conditional Lebesgue property).
\end{enumerate}
\end{prop}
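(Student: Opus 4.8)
The plan is to show the two implications separately, using Remark \ref{rem: ASconvergence} as the bridge between the classical and conditional notions of almost sure convergence. The key technical device throughout is that $\boldsymbol\rho$ is the conditional function \emph{generated} by $\rho$, so that $\boldsymbol\rho(\textbf{x})=\rho(x)|1$ for every $\textbf{x}=x|1$ with $x\in L^\infty_\F(\EE)$, and the conditional $\nliminf$ of a conditional sequence $\{\textbf{x}_\textbf{n}\}$ built from a classical sequence $\{x_n\}$ equals $(\essliminf_n x_n)|1$.

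First I would prove $(2)\Rightarrow(1)$, which is the easy direction. Take a bounded sequence $\{x_n\}\subset L^\infty(\EE)$ with $\lim_n x_n=x$ a.s.; since $L^\infty(\EE)\subset L^\infty_\F(\EE)$, we may form the associated conditional sequence $\{\textbf{x}_\textbf{n}\}$ in $\textbf{L}^\infty$, which is conditionally bounded (the uniform bound on $\{x_n\}$ gives a conditional bound) and, by Remark \ref{rem: ASconvergence}, conditionally converges a.s. to $\textbf{x}$. Applying the conditional Fatou (resp. Lebesgue) property yields $\boldsymbol\rho(\textbf{x})\leq\nliminf_\textbf{n}\boldsymbol\rho(\textbf{x}_\textbf{n})$ (resp. $\nlim_\textbf{n}\boldsymbol\rho(\textbf{x}_\textbf{n})=\boldsymbol\rho(\textbf{x})$); unravelling the identifications $\boldsymbol\rho(\textbf{x}_\textbf{n})=\rho(x_n)|1$ and $\nliminf_\textbf{n}\boldsymbol\rho(\textbf{x}_\textbf{n})=(\essliminf_n\rho(x_n))|1$ gives exactly $\rho(x)\leq\essliminf_n\rho(x_n)$ (resp. $\rho(x)=\lim_n\rho(x_n)$).

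For $(1)\Rightarrow(2)$, let $\{\textbf{x}_\textbf{n}\}$ be an arbitrary conditionally bounded conditional sequence in $\textbf{L}^\infty$ conditionally converging a.s. to $\textbf{x}$; write $\{x_n\}$ for the underlying stable family in $L^\infty_\F(\EE)$ and $x$ for its conditional limit. The point is that a general element of $L^\infty_\F(\EE)$ is of the form $y\,x_0$ with $y\in L^0_+(\F)$ and $x_0\in L^\infty(\EE)$, so one must reduce from $L^\infty_\F(\EE)$ to $L^\infty(\EE)$. Conditional boundedness of $\{\textbf{x}_\textbf{n}\}$ means $\Vert x_n|\F\Vert_\infty\leq\eta$ for all $n$ for a single $\eta\in L^0_+(\F)$; on each of countably many $\F$-measurable pieces $b_m:=(m-1\leq\eta<m)$ one has $1_{b_m}x_n\in L^\infty(\EE)$ uniformly bounded, and $1_{b_m}x_n\to 1_{b_m}x$ a.s. By cash-invariance and stability of $\rho$ (established in Lemma \ref{lem: cont}), $1_{b_m}\rho(x_n)=1_{b_m}\rho(1_{b_m}x_n)$, so the classical Fatou (resp. Lebesgue) property of $\rho|_{L^\infty(\EE)}$ applies piecewise; gluing over $\{b_m\}\in p(1)$ and using that $\essliminf$ and $\lim$ commute with multiplication by the $1_{b_m}$ gives $\rho(x)\leq\essliminf_n\rho(x_n)$ (resp. equality), which by Remark \ref{rem: ASconvergence} translates back to the conditional statement $\boldsymbol\rho(\textbf{x})\leq\nliminf_\textbf{n}\boldsymbol\rho(\textbf{x}_\textbf{n})$ (resp. $\nlim_\textbf{n}\boldsymbol\rho(\textbf{x}_\textbf{n})=\boldsymbol\rho(\textbf{x})$).

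The main obstacle is this last reduction step: handling a general conditionally bounded sequence in $\textbf{L}^\infty$, whose underlying elements live in $L^\infty_\F(\EE)$ rather than $L^\infty(\EE)$, and checking that the truncation-and-gluing argument is compatible with the conditional $\essliminf$ / conditional limit. One must be careful that the partition $\{b_m\}$ is chosen independently of $n$ (which conditional boundedness guarantees) and that the a.s.\ convergence survives multiplication by $1_{b_m}$ (which is immediate). Everything else — monotonicity, cash-invariance, stability, and the dictionary between classical and conditional $\liminf$ — is already available from Lemma \ref{lem: cont} and Remark \ref{rem: ASconvergence}.
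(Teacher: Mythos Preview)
Your proposal is correct and follows essentially the same route as the paper: $(2)\Rightarrow(1)$ is handled by passing from a classical bounded sequence to the induced conditional sequence via Remark \ref{rem: ASconvergence}, and $(1)\Rightarrow(2)$ is handled by taking the $L^0_+(\F)$-bound $\eta$, partitioning along $b_m=(m-1\leq\eta<m)$, applying the classical property on each piece, and gluing via stability of $\rho$. One small remark: cash-invariance is not actually needed in the piecewise step --- the identity $1_{b_m}\rho(x_n)=1_{b_m}\rho(1_{b_m}x_n)$ is precisely stability of $\rho$ (Lemma \ref{lem: cont}), and that alone suffices.
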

\begin{proof}

Let us give the proof for the Fatou property. The Lebesgue property is similar.

$2\Rightarrow 1:$ Let $\{x_n\}$ be a bounded sequence of $L^\infty(\EE)$, which converges a.s. to $x$. It defines a conditional sequence $\{\textbf{x}_\textbf{n}\}$ in $\textbf{L}^\infty$ which is conditionally bounded and conditionally converges a.s. to $\textbf{x}$. Then, having $\boldsymbol\rho$ the conditional Fatou property it happens that $\boldsymbol\rho(\textbf{x})\leq \underset{\textbf{n}}\nliminf\boldsymbol\rho(\textbf{x}_\textbf{n})$. But, it means that $\rho(x)\leq \underset{n}\essliminf \rho(x_n)$.

$1\Rightarrow 2:$ Let $\{\textbf{x}_\textbf{n}\}$ be a conditionally bounded sequence which conditionally converges a.s. to $\textbf{x}$. Then, we have that $x_n \rightarrow x$ a.s. and there exists $\eta\in L^0_{+}(\mathcal{F})$ with $|x_n|\leq \eta$ for every $n\in\N$. 

For each $k\in\N$, let us define $a_k:=(k-1\leq \eta <k)$. Then $\{a_k\}\in p(1)$ and $|1_{a_k} x_n|\leq 1_{a_k}\eta \leq k$ for every $k\in\N$. Therefore, since $\rho_0$ has the Fatou property, we see that $1_{a_k} \rho(1_{a_k} x)\leq 1_{a_k} \essliminf \rho(1_{a_k} x_n)$. Besides, $\rho$ is stable, hence $1_{a_k} \rho(x)\leq 1_{a_k} \essliminf \rho (x_n)$, which yields that $\rho(x)\leq \essliminf\rho(x_n)$.
\end{proof}

%\begin{defn}
%\begin{itemize}
	%\item Let $E[\mathscr{T}]$ be a topological 
%\end{itemize}
%\end{defn}

%(...) definition of lower semicontinuous.

%We have the following result, which was essentially proved in \cite{key-37} and \cite{key-15}:

%The result below can be easily derived from Theorem 4.10 of T. Guo et al.\cite{key-5}.  

We have the following result:

\begin{thm}
\label{thm: Fatou}
Suppose that $\rho : L^\infty_{\mathcal{F}}(\mathcal{E})\rightarrow L^0(\F)$ is a convex risk measure. Then the following conditions are equivalent:
\begin{enumerate}
	\item $\rho$ can be represented by the Fenchel conjugate $\rho^*$, i.e, for $x\in {L}^\infty_{\mathcal{F}}(\mathcal{E})$
	\begin{equation}
	\label{fatou1}
	\rho(x)=\esssup \left\{ \E_\PP\left[x{y}\mid\mathcal{F}\right] - \rho^*({y}) \:;\: {y}\in {L}^1_{\mathcal{F}}(\mathcal{E}),y\leq 0, \E\left[{y}\mid\mathcal{F}\right]=-1\right\};
\end{equation}

	\item ${\rho |}_{L^\infty (\mathcal{E})}$ has the Fatou property;
	
	\item $\boldsymbol\rho$ has the conditional Fatou property;
	
	\item the level set $V_{c}(\rho):=\left\{x\in L^\infty_\F (\mathcal{E}) \:;\: \rho(x)\leq c \right\}$ is $\sigma (L^\infty_{\mathcal{F}}(\mathcal{E}),L^1_{\mathcal{F}}(\mathcal{E}))$-closed;

	%$\rho$ is lower semicontinuous for the weak-$*$ topology $\sigma (L^\infty_{\mathcal{F}}(\mathcal{E}),L^1_{\mathcal{F}}(\mathcal{E}))$.
	%\item The level set $V_{c}(\rho):=\left\{x\in L^\infty (\mathcal{E}) ; \rho(X)\leq c \right\}$ is weak-$*$ closed in $L^\infty_{\mathcal{F}}(\mathcal{E})$ with $\sigma (L^\infty_{\mathcal{F}}(\mathcal{E}),L^1_{\mathcal{F}}(\mathcal{E}))$. 
	\item the conditional level set $\textbf{V}_{\textbf{c}}(\boldsymbol\rho):=\left[\textbf{x}\in \textbf{L}^\infty \:;\: \boldsymbol\rho(\textbf{x})\leq \textbf{c} \right]$ is conditionally $\sigma (\textbf{L}^\infty,\textbf{L}^1)$-closed. 
	
	%$\boldsymbol\rho$ is conditionally lower semicontinuous for the conditional weak-$*$ topology $\sigma (\textbf{L}^\infty_{\mathcal{F}}(\mathcal{E}),\textbf{L}^1_{\mathcal{F}}(\mathcal{E}))$.
\end{enumerate}
\end{thm}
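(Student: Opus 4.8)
The plan is to prove the five equivalences by closing the loop $1\Leftrightarrow 2\Leftrightarrow 3$ and $1\Leftrightarrow 4\Leftrightarrow 5$. The equivalence $2\Leftrightarrow 3$ is already Proposition~\ref{prop: equivFatou}, so only $1\Rightarrow 2$, $1\Rightarrow 4$, $4\Rightarrow 1$, $2\Rightarrow 1$ and the identification $4\Leftrightarrow 5$ remain; I would take $1$ as the central condition and regard $4\Leftrightarrow 5$ as the one genuinely new point.

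The two implications out of $1$ should be direct. For $1\Rightarrow 4$: by (\ref{fatou1}), for $c\in L^0(\F)$ one has $x\in V_c(\rho)$ if and only if $\E_\PP[xy\mid\F]-\rho^*(y)\le c$ for every admissible $y$ (i.e. $y\in L^1_\F(\EE)$, $y\le 0$, $\E_\PP[y\mid\F]=-1$), so $V_c(\rho)=\bigcap_y\{x\:;\:\E_\PP[xy\mid\F]\le c+\rho^*(y)\}$ is an intersection of $\sigma(L^\infty_\F(\EE),L^1_\F(\EE))$-closed half-spaces (each $x\mapsto\E_\PP[xy\mid\F]$ being one of the weakly continuous functionals defining the weak topology), hence weakly closed. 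For $1\Rightarrow 2$: restrict (\ref{fatou1}) to $x\in L^\infty(\EE)$; if $\{x_n\}\subset L^\infty(\EE)$ is bounded and $x_n\to x$ a.s., then $x\in L^\infty(\EE)$ and, for each admissible $y$, conditional dominated convergence gives $\E_\PP[x_ny\mid\F]\to\E_\PP[xy\mid\F]$ a.s., so $\E_\PP[xy\mid\F]-\rho^*(y)=\essliminf_n\left(\E_\PP[x_ny\mid\F]-\rho^*(y)\right)\le\essliminf_n\rho(x_n)$, and taking $\esssup$ over $y$ yields the Fatou property.

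For $4\Rightarrow 1$ I would apply the $L^0$-module Fenchel--Moreau theorem \cite[Theorem 3.8]{key-10} to $L^\infty_\F(\EE)$ paired with $L^1_\F(\EE)$: condition~$4$ means that $\rho$ is $\sigma(L^\infty_\F(\EE),L^1_\F(\EE))$-lower semicontinuous, while $\rho$ is finite (by (\ref{Lipschitz})) and $L^0(\F)$-convex (by Lemma~\ref{lem: cont}), so $\rho=\rho^{**}$; monotonicity then forces $\rho^*(y)\notin L^0(\F)$ unless $y\le 0$, and $L^0(\F)$-cash invariance forces $\rho^*(y)\notin L^0(\F)$ unless $\E_\PP[y\mid\F]=-1$, which confines the supremum in $\rho^{**}$ to the constraint set of (\ref{fatou1}). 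For $2\Rightarrow 1$ I would normalize by $\tilde\rho:=\rho-\rho(0)$, which maps $L^\infty(\EE)$ into $L^\infty(\F)$, is monotone, convex and $L^\infty(\F)$-cash invariant, and has the Fatou property if and only if $\rho$ does; the Detlefsen--Scandolo representation \cite{key-20} then gives (\ref{fatou1}) for $x\in L^\infty(\EE)$, which extends to all $x\in L^\infty_\F(\EE)$ by localization --- writing $x=\sum_k 1_{a_k}x^{(k)}$ with $x^{(k)}\in L^\infty(\EE)$, $|x^{(k)}|\le k$, and using that $\rho$, $\rho^*$ and the constraint set $\{y\:;\:y\le 0,\ \E_\PP[y\mid\F]=-1\}$ are stable.

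The identification $4\Leftrightarrow 5$ is where conditional set theory enters and is, I expect, the main obstacle. By Lemma~\ref{lem: cont} the set $V_c(\rho)$ is $L^0(\F)$-convex and stable, and is exactly the generating set of $\textbf{V}_{\textbf{c}}(\boldsymbol\rho)$; by Theorem~\ref{thm: conection} and formula (\ref{eq: weak}) the functor $\phi$ sends the conditional weak topology $\sigma(\textbf{L}^\infty,\textbf{L}^1)$ onto the \emph{stably} induced weak topology $\sigma(L^\infty_\F(\EE),L^1_\F(\EE))_{cc}$ (written $\sigma_{cc}$ below, and $\sigma$ for the ordinary weak topology of condition~$4$) and conditionally closed conditional subsets onto $\sigma_{cc}$-closed stable subsets, so $5$ is equivalent to ``$V_c(\rho)$ is $\sigma_{cc}$-closed''. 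It then remains to match $\sigma_{cc}$ with $\sigma$: $\sigma_{cc}$ is finer, giving one inclusion for free, and for the converse one notes that both are locally $L^0$-convex and, by (\ref{eq: weak}), have the same continuous dual $L^1_\F(\EE)$, hence are compatible with the same dual pair, so the $L^0$-bipolar theorem shows that an $L^0$-convex stable set is $\sigma_{cc}$-closed if and only if it is $\sigma$-closed. The delicacy lies entirely in keeping track of which closure / lower-semicontinuity is meant at each transition; an alternative, fully conditional route for $3\Rightarrow 5$ would instead invoke conditional versions of the Krein--\v{S}mulian and Mazur theorems, reducing conditional weak-$*$ closedness of the sublevel sets to their conditionally bounded slices and then applying the conditional Fatou property there.
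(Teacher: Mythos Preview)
Your proposal is correct and closes all the equivalences, but it diverges from the paper at the step that links condition~$5$ back to the others. The paper argues along $1\Leftrightarrow 2\Leftrightarrow 3\Rightarrow 4\Rightarrow 5\Rightarrow 3$; the closing implication $5\Rightarrow 3$ is done by a direct sequential argument: given a conditionally bounded $\{\textbf{x}_\textbf{n}\}$ converging conditionally a.s.\ to $\textbf{x}$, pass (via the conditional Eberlein--\v{S}mulian theorem) to a conditional subsequence along which $\boldsymbol\rho(\textbf{x}_\textbf{n})\to\textbf{a}:=\nliminf_\textbf{n}\boldsymbol\rho(\textbf{x}_\textbf{n})$, use cash invariance to place $\textbf{z}_\textbf{n}:=\boldsymbol\rho(\textbf{x}_\textbf{n})-\textbf{x}_\textbf{n}$ in $\textbf{V}_\textbf{0}(\boldsymbol\rho)$, and then show by conditional dominated convergence that $\{\textbf{z}_\textbf{n}\}$ converges in $\sigma(\textbf{L}^\infty,\textbf{L}^1)$ to $\textbf{a}-\textbf{x}$, so that $\boldsymbol\rho(\textbf{x})\leq\textbf{a}$. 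You instead close via $5\Rightarrow 4\Rightarrow 1$, invoking an $L^0$-bipolar/Mazur argument for $5\Rightarrow 4$ and the $L^0$-Fenchel--Moreau theorem of \cite{key-10} for $4\Rightarrow 1$. Your route is duality-driven and conceptually tidy; the paper's route is more elementary in that it avoids both Fenchel--Moreau and any comparison of closed convex sets for the pair $(\sigma,\sigma_{cc})$, replacing them by a concrete weak-$*$ convergence computation. One caution on your $5\Rightarrow 4$: the equality of continuous duals for $\sigma$ and $\sigma_{cc}$ is not what (\ref{eq: weak}) says on its face --- it identifies $\sigma_{cc}$ with the image under $\phi$ of the conditional weak topology, so you still need the (standard, cf.\ \cite{key-38}) fact that the conditional dual of $\textbf{E}[\sigma(\textbf{E},\textbf{E}^*)]$ is $\textbf{E}^*$ and then transport this through the equivalence of categories before the $L^0$-bipolar theorem applies.
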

\begin{proof}

We follow the line $1\Leftrightarrow 2 \Leftrightarrow 3 \Rightarrow 4 \Rightarrow 5 \Rightarrow 3$.

$1\Leftrightarrow 2$: It is an easy consequence of \cite[Theorem 1]{key-20} and \cite[Theorem 4.4]{key-37}, by just noting that we can suppose w.l.g. that $\rho(0)=0$ and, in view of 4 of Lemma \ref{lem: cont}, we have $\rho(L^\infty(\EE))\subset L^\infty(\F)$.

$2\Leftrightarrow 3$: it is just Proposition \ref{prop: equivFatou}.

%Deflefsen et al.\cite{key-20} showed that $2$ is equivalent to the representation of $\rho|_{L^\infty(\EE)}:L^\infty(\EE)\rightarrow L^0(\F)$.
%
%Guo et al.\cite{key-37} show this representation can be extended to $\rho$.

$1\Rightarrow 4$: it is clear.

$4\Rightarrow 5$: The topology induced by $\sigma(L^\infty_{\mathcal{F}}(\mathcal{E}),L^1_{\mathcal{F}}(\mathcal{E}))_{cc}$ is finer than the topology induced by $\sigma(L^\infty_{\mathcal{F}}(\mathcal{E}),L^1_{\mathcal{F}}(\mathcal{E}))$. Theorem \ref{thm: conection} yields the result.

$5\Rightarrow 3$: Let $\{\textbf{x}_\textbf{n}\}$ be a conditionally bounded sequence which conditionally converges a.s. to $\textbf{x}$. Let us define $\textbf{a}:=\underset{\textbf{n}}\nliminf\boldsymbol\rho(\textbf{x}_\textbf{n})$. Due to the conditional version of Eberlein-\v{S}mulian theorem (see \cite[Theorem 4.7]{key-38}) and by taking a conditional subsequence, if necessary, we can suppose that $\boldsymbol\rho(\textbf{x}_\textbf{n})$ conditionally converges to $\textbf{a}$. Further, by cash invariance, we know that the conditional sequence $\{\textbf{z}_\textbf{n}\}$, with $\textbf{z}_\textbf{n}:=\boldsymbol\rho(\textbf{x}_\textbf{n})-\textbf{x}_\textbf{n}$, is a conditional sequence in $\textbf{V}_0(\boldsymbol\rho)$. Therefore, since $\textbf{V}_0(\boldsymbol\rho)$ is conditionally $\sigma (\textbf{L}^\infty,\textbf{L}^1)$-closed, by cash invariance, it suffices to show that $\{\textbf{z}_\textbf{n}\}$ conditionally weakly-$*$ converges to $\textbf{z}:=\textbf{a}-\textbf{x}$. Indeed, let us take $\textbf{x}_1 \in \textbf{L}^1$ and let us show that $\nlim_{\textbf{n}}\E_\PP[\textbf{x}_1 \textbf{z}_\textbf{n}|\F]=\textbf{z}$. We can suppose $\textbf{x}_1\geq \textbf{0}$ and $\E_\PP[x_1|\F]=1$ w.l.g. This allows us to choose a probability measure $Q\in\mathcal{P}_\F$ with $x_1 =\frac{d Q}{d\PP}$.

Now, in view of remark \ref{rem: ASconvergence}, we know that $\{z_n\}$ converges a.s. to $z$. By the theorem of dominated convergence for conditional expectations, we obtain $\E_Q[z_n|\F]$ converges a.s. to $z$. Consequently, the conditional sequence $\E_Q[\textbf{z}_\textbf{n}|\F]$ conditionally converges to $\textbf{z}$, and the result follows. 
\end{proof}

%\begin{rem}
%\label{lowerSem}
%Given that the conditional expectation is local, we have that $\rho^*$ is local too, therefore it defines a conditional function $\rho^*:\textbf{L}^1_\F(\EE)\rightarrow\overline{\textbf{R}}$. Further, for given a conditional sequence $\{\textbf{y}_\textbf{n}\}$ which conditionally weakly converges to $\textbf{y}\in\textbf{L}^1_\F(\EE)$ on $1$, we have $\rho^*(\textbf{y})\leq \nliminf_\textbf{n} \rho^*(\textbf{y}_\textbf{n})$. Indeed, by definition we have that $\rho^*(\textbf{y}_\textbf{n})\leq E_\PP[\textbf{x} \textbf{y}_\textbf{n}|\F]-\rho(\textbf{x})$   for all $x\in \textbf{L}^\infty_\F(\EE)$ on $1$. By taking $\nliminf$ we obtain that $\nliminf_\textbf{n}\rho^*(\textbf{y}_\textbf{n})\leq E_\PP[\textbf{x} \textbf{y}|\F]-\rho(\textbf{x})$   for all $x\in \textbf{L}^\infty_\F(\EE)$ on $1$. Finally, by taking conditional supremums we obtain the conclusion.
%
%\end{rem}

%Let us recall the notion of Lebesgue property for convex risk measures ...

Below, we state the extension of the Jouini-Schachermayer-Touzi theorem:

\begin{thm}
\label{th: LebesgueProperty}
Suppose $\rho : L^\infty_{\mathcal{F}}(\mathcal{E})\rightarrow L^0(\F)$ is a convex risk measure such that $\rho|_{L^\infty(\EE)}$ has the Fatou property, and put:
%\[
%\begin{array}{cc}
%\rho^*(y):=\esssup\{E_\PP[x y|\F]-\rho(x) \:;\: x\in L^\infty_\F(\EE) \} & \textnormal{ for }y\in L^1_\F(\EE)\textnormal{, and }
%\end{array}
%\]
\[
\begin{array}{cc}
\rho_0^*(y):=\esssup\{\E_\PP[x y|\F]-\rho_0(x) \:;\: x\in L^\infty(\EE) \} & \textnormal{ for }y\in L^1(\EE)
\end{array}
\]
 with $\rho_0:=\rho|_{L^\infty_\F(\EE)}$. The following statements are equivalent: 
\begin{enumerate} 
\item The conditional set $\textbf{V}_{\boldsymbol\rho^*}(\textbf{c}):=\left[\textbf{y}\in \textbf{L}^1 \:;\: \boldsymbol\rho^*(\textbf{y})\leq \textbf{c}\right]$ is  conditionally $\sigma\left(\textbf{L}^1,\textbf{L}^\infty\right)$-compact for all $\textbf{c}\in \textbf{R}$;

\item $\rho_0$ has the Lebesgue property;

\item $\boldsymbol\rho$ has the conditional Lebesgue property;

\item for every $x\in L^\infty(\EE)$ there is $y\in L^1(\EE)$ with $y\leq 0$ and $\E_\PP[y|\F]=-1$ such that $\rho(x)=\E_\PP[x y|\F]-\rho_0^*(y)$;
%\item $\rho(X)=\E_\PP[X Y|\F]-\rho^*(Y)$ for some $Y\in L^1_\F(\EE)$ with $\E_\PP[Y|\F]=-1$ for all $X\in L^\infty(\EE)$;
\item For every $x\in L^\infty_\F(\EE)$ there is $y\in L^1_\F(\EE)$ with $y\leq 0$ and $\E_\PP[y|\F]=-1$ such that $\rho(x)=\E_\PP[x y|\F]-\rho^*(y)$. 
\end{enumerate}
\end{thm}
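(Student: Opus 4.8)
The plan is to establish $(1)\Leftrightarrow(5)$, the chain $(5)\Rightarrow(4)\Rightarrow(2)\Leftrightarrow(3)$, and the closing implication $(3)\Rightarrow(5)$, so that all five statements become equivalent. The standing hypothesis that $\rho|_{L^\infty(\EE)}$ has the Fatou property enters throughout via Theorem \ref{thm: Fatou}, which already provides the robust representation of $\rho$; in conditional terms this is the biconjugation $\boldsymbol\rho=\boldsymbol\rho^{**}$, that is, $\boldsymbol\rho(\textbf{x})=\nsup\left[\E_\PP[\textbf{x}\textbf{y}|\F]-\boldsymbol\rho^*(\textbf{y})\:;\:\textbf{y}\in\textbf{L}^1,\:\textbf{y}\leq\textbf{0},\:\E_\PP[\textbf{y}|\F]=-\textbf{1}\right]$ for every $\textbf{x}\in\textbf{L}^\infty$. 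Being a conditional Fenchel conjugate, $\boldsymbol\rho^*$ is conditionally $\sigma(\textbf{L}^1,\textbf{L}^\infty)$-lower semicontinuous, so each conditional sublevel set $\textbf{V}_{\boldsymbol\rho^*}(\textbf{c})$ is conditionally convex and conditionally $\sigma(\textbf{L}^1,\textbf{L}^\infty)$-closed. Finally, $\textbf{L}^1$ is a conditional Banach space with $(\textbf{L}^1)^*=\textbf{L}^\infty$ (Example \ref{exam: Lp}) whose conditional dual unit ball $\textbf{B}_{\textbf{L}^\infty}$ is conditionally $\omega^*$-convex block compact (Lemma \ref{lem: WCG}), so that Theorem \ref{thm: James} is applicable on $\textbf{L}^1$.

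For $(1)\Rightarrow(5)$, fix $\textbf{x}\in\textbf{L}^\infty$; since $\E_\PP[\,|\textbf{x}\textbf{y}|\,|\,\F\,]\leq\nVert\textbf{x}\nVert_\infty$ whenever $\textbf{y}\leq\textbf{0}$ and $\E_\PP[\textbf{y}|\F]=-\textbf{1}$, the conditional supremum defining $\boldsymbol\rho(\textbf{x})$ is unchanged if $\textbf{y}$ is confined to $\textbf{V}_{\boldsymbol\rho^*}(\textbf{c})$ with $\textbf{c}:=\nVert\textbf{x}\nVert_\infty-\boldsymbol\rho(\textbf{x})+\textbf{1}$; by (1) this conditional set is conditionally $\sigma(\textbf{L}^1,\textbf{L}^\infty)$-compact and the objective $\textbf{y}\mapsto\E_\PP[\textbf{x}\textbf{y}|\F]-\boldsymbol\rho^*(\textbf{y})$ is conditionally upper semicontinuous, so a conditional maximizer $\textbf{y}_0$ exists, which conditional monotonicity and cash-invariance of $\boldsymbol\rho$ force to satisfy $\textbf{y}_0\leq\textbf{0}$ and $\E_\PP[\textbf{y}_0|\F]=-\textbf{1}$; scalarizing through the functor $\phi$ of Theorem \ref{thm: conection} yields (5). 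The reverse $(5)\Rightarrow(1)$ is Theorem \ref{thm: James} applied to the conditionally proper function $\textbf{f}=\boldsymbol\rho^*$ on $\textbf{L}^1$: by $\boldsymbol\rho=\boldsymbol\rho^{**}$, statement (5) says exactly that every $\textbf{x}\in(\textbf{L}^1)^*=\textbf{L}^\infty$ attains $\nsup\left[\E_\PP[\textbf{x}\textbf{y}|\F]-\boldsymbol\rho^*(\textbf{y})\:;\:\textbf{y}\in\textbf{L}^1\right]$, whence each $\textbf{V}_{\boldsymbol\rho^*}(\textbf{c})$ is conditionally weakly relatively compact and, being conditionally weakly closed, conditionally $\sigma(\textbf{L}^1,\textbf{L}^\infty)$-compact. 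In the chain $(5)\Rightarrow(4)\Rightarrow(2)\Leftrightarrow(3)$: $(5)\Rightarrow(4)$ is the restriction to $x\in L^\infty(\EE)$ together with the identification of $\rho^*$ with $\rho_0^*$ on densities with $\E_\PP[y|\F]=-1$ (via \cite[Theorem 4.4]{key-37} and the localization used in the proof of Theorem \ref{thm: Fatou}, which also places the attaining $y$ in $L^1(\EE)$); $(4)\Rightarrow(2)$ is the classical Jouini--Schachermayer--Touzi theorem (\cite[Theorem 2]{key-33}, \cite[Theorem 5.2]{key-34}) combined with scalarization of $\rho_0$ to the scalar convex risk measure $\E_\PP[\rho_0(\cdot)]$ on $L^\infty(\EE)$; and $(2)\Leftrightarrow(3)$ is the Lebesgue-property analogue of Proposition \ref{prop: equivFatou}, with the same proof.

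The step I expect to be the main obstacle is the closing implication $(3)\Rightarrow(5)$: deducing conditional subdifferentiability of $\boldsymbol\rho$ from the conditional Lebesgue property. Given $\textbf{x}\in\textbf{L}^\infty$, I would take a conditional maximizing sequence of conditional densities $\{\textbf{y}_\textbf{n}\}$ for $\textbf{y}\mapsto\E_\PP[\textbf{x}\textbf{y}|\F]-\boldsymbol\rho^*(\textbf{y})$, pass to conditional convex combinations $\{\tilde{\textbf{y}}_\textbf{n}\}$ that conditionally converge almost surely by a conditional Komlós-type argument (the module analogue of the results quoted in \cite{key-13}), and then rule out loss of conditional mass in the limit. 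The conditional Lebesgue property is used precisely here, through the conditional escape-of-mass contradiction: were conditional mass to escape onto conditional events whose conditional probability tends to $\textbf{0}$, a suitable conditionally bounded sequence of conditional multiples of their conditional indicator functions would conditionally converge almost surely to $\textbf{0}$ while $\boldsymbol\rho$ applied to it would stay conditionally bounded away from $\boldsymbol\rho(\textbf{0})$ once the multiple is chosen large. With conditional mass conserved, a conditional version of Scheffé's lemma (an immediate consequence of the conditional dominated convergence theorem) upgrades the convergence to conditional $\nVert\cdot\nVert_1$-convergence, so that $\E_\PP[\textbf{x}\tilde{\textbf{y}}_\textbf{n}|\F]$ conditionally converges to $\E_\PP[\textbf{x}\tilde{\textbf{y}}|\F]$, and conditional lower semicontinuity of $\boldsymbol\rho^*$ then gives that $\tilde{\textbf{y}}$ attains the supremum. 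The genuine work is carrying out this conditional Komlós/Scheffé bookkeeping rigorously inside the conditional-set formalism; the remaining implications are transcriptions of classical convex analysis through Theorem \ref{thm: conection} and Proposition \ref{prop: equivFatou}.
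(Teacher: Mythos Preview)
Your overall architecture is sound and largely matches the paper: $(5)\Rightarrow(1)$ via Theorem~\ref{thm: James}, $(1)\Rightarrow(5)$ via conditional compactness and lower semicontinuity of $\boldsymbol\rho^*$, $(2)\Leftrightarrow(3)$ via Proposition~\ref{prop: equivFatou}, and $(4)\Rightarrow(2)$ via scalarization $\rho'=\E_\PP[\rho_0(\cdot)]$ together with the classical Jouini--Schachermayer--Touzi theorem --- all of these coincide with what the paper does.

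The substantive difference is in how you close the cycle. You propose to prove $(3)\Rightarrow(5)$ directly, via a conditional Koml\'os/Scheff\'e argument and an escape-of-mass contradiction inside the conditional-set formalism. The paper instead goes $(3)\Rightarrow(2)\Rightarrow(4)\Rightarrow(5)$, and the work is concentrated in $(2)\Rightarrow(4)$: it scalarizes again to $\rho'(x)=\E_\PP[\rho_0(x)]$, checks that the Lebesgue property transfers to $\rho'$ by dominated convergence (using the conditional Lipschitz bound of Lemma~\ref{lem: cont}), applies the \emph{classical} Jouini--Schachermayer--Touzi theorem to $\rho'$ to get a maximizer $y\in L^1(\EE)$, and then verifies $\E_\PP[y|\F]=-1$ and $\E_\PP[\rho_0^*(y)]=(\rho')^*(y)$ by the Detlefsen--Scandolo argument. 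The passage $(4)\Rightarrow(5)$ is then a short stability computation using Lemma~\ref{lem: Guo}. In short, the paper reduces the hard step to the known scalar theorem rather than rebuilding the Koml\'os/Scheff\'e machinery conditionally.

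Your route is not wrong in spirit --- it is essentially the classical proof of Jouini--Schachermayer--Touzi carried out inside the conditional framework --- but it is heavier: you would need a conditional Koml\'os lemma and a conditional Scheff\'e lemma as genuine results, and the escape-of-mass argument has to be made precise for conditional sequences and conditional indicator functions. Since you already invoke scalarization for $(4)\Rightarrow(2)$, you can use the very same device in the opposite direction to get $(2)\Rightarrow(4)$ for free from the classical theorem, and then $(4)\Rightarrow(5)$ is nearly trivial. That is what the paper does, and it spares you the conditional bookkeeping you flagged as the main obstacle.
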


Before proving the main result, we need some preliminary results.  

\begin{prop}
\label{prop: condSimpleFunc}
The conditional set $\textbf{L}^\infty$ is conditionally dense in $\textbf{L}^1$ with respect the conditional norm topology.
\end{prop}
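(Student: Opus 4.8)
The plan is to reduce the conditional density statement to the classical fact that $L^\infty(\EE)$ is dense in $L^1(\EE)$ via the equivalence between conditional set theory and $L^0$-theory established in Theorem~\ref{thm: conection}, together with the standard truncation argument for $L^1$. First I would unpack what conditional density means in this setting: a conditional subset $\textbf{L}^\infty$ of $\textbf{L}^1$ is conditionally dense precisely when, for every conditional element $\textbf{y}\in\textbf{L}^1$ and every $\textbf{r}\in\textbf{R}^{++}$, the conditional ball $\textbf{y}+\textbf{U}_{\nVert\cdot\nVert_1,\textbf{r}}$ meets $\textbf{L}^\infty$; by \cite[Proposition 3.5]{key-7} and the translation provided by Theorem~\ref{thm: conection}, this is equivalent to showing that for every $y\in L^1_\F(\EE)$ and every $\varepsilon\in L^0_{++}(\F)$ there is some $x\in L^\infty_\F(\EE)$ with $\Vert y-x\,|\,\F\Vert_1\leq\varepsilon$, where $\Vert\cdot\,|\,\F\Vert_1=\E_\PP[|\cdot|\,|\,\F]$.

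The main computation is then the truncation. Given $y\in L^1_\F(\EE)$, set $x_k:=(-k\vee y)\wedge k\in L^\infty(\EE)\subset L^\infty_\F(\EE)$ for $k\in\N$. Then $|y-x_k|\leq|y|\,1_{(|y|>k)}$ pointwise, so by conditional monotone (or dominated) convergence $\Vert y-x_k\,|\,\F\Vert_1=\E_\PP[|y-x_k|\,|\,\F]\downarrow 0$ almost surely as $k\to\infty$. Now, given $\varepsilon=\sum_{j}1_{a_j}r_j$ with $\{a_j\}\in p(1)$ and $r_j\in\R^{++}$ — or more directly, given an arbitrary $\varepsilon\in L^0_{++}(\F)$ — the convergence $\E_\PP[|y-x_k|\,|\,\F]\to 0$ a.s.\ lets us pick, for each $j\in\N$, an index $k_j$ with $1_{a_j}\E_\PP[|y-x_{k_j}|\,|\,\F]\leq 1_{a_j}\varepsilon$ on the set where $\E_\PP[|y-x_{k_j}|\,|\,\F]\leq\varepsilon$ exhausts $a_j$ up to a partition; concretely, define $b_k:=(\E_\PP[|y-x_k|\,|\,\F]\leq\varepsilon)$, so that $b_k\uparrow 1$, put $c_1:=b_1$, $c_k:=b_k\wedge(\vee_{i<k}b_i)^c$, which gives $\{c_k\}\in p(1)$, and set $x:=\sum_{k}1_{c_k}x_k$. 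Since $L^\infty_\F(\EE)$ is a stable $L^0(\F)$-module, $x\in L^\infty_\F(\EE)$, and on each $c_k$ we have $\E_\PP[|y-x|\,|\,\F]=\E_\PP[|y-x_k|\,|\,\F]\leq\varepsilon$, hence $\Vert y-x\,|\,\F\Vert_1\leq\varepsilon$ globally. This exhibits the required element of the conditional $\varepsilon$-ball around $\textbf{y}$.

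Finally I would phrase the conclusion back in conditional language: the element $\textbf{x}=x|1$ constructed above lies in $\textbf{L}^\infty$ and in $\textbf{y}+\textbf{U}_{\nVert\cdot\nVert_1,\textbf{r}}$ (taking $\varepsilon$ the representative of $\textbf{r}$), and since $\textbf{y}$ and $\textbf{r}$ were arbitrary, $\textbf{L}^\infty$ meets every conditional neighborhood of every conditional element of $\textbf{L}^1$, which by the characterization of conditional closure in \cite{key-7} says exactly that the conditional closure of $\textbf{L}^\infty$ in $(\textbf{L}^1,\nVert\cdot\nVert_1)$ is all of $\textbf{L}^1$. I expect the only genuinely delicate point to be the bookkeeping of the partition $\{c_k\}$ — i.e.\ upgrading the ordinary a.s.\ convergence of the truncations to a statement that works uniformly against an $L^0_{++}(\F)$-valued (random) threshold $\varepsilon$ — but this is precisely the kind of gluing that stability of $L^\infty_\F(\EE)$ is designed to handle, so no real obstacle arises.
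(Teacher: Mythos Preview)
Your proposal is correct and follows essentially the same approach as the paper: approximate an arbitrary element of $L^1_\F(\EE)$ by truncations in $L^\infty(\EE)$ and invoke conditional monotone/dominated convergence to get $\E_\PP[|y-x_k|\,|\,\F]\to 0$ a.s. The only difference is packaging: the paper simply builds the conditional sequence $\{\textbf{x}_\textbf{n}\}$ from the classical truncations and observes that $\nlim_\textbf{n}\nVert\textbf{x}_\textbf{n}-\textbf{x}\nVert_1=\textbf{0}$ (since conditional convergence in $\textbf{R}$ coincides with a.s.\ convergence in $L^0(\F)$), thereby bypassing your explicit partition-and-glue step against a random $\varepsilon$; your version unpacks this same mechanism by hand via the stability of $L^\infty_\F(\EE)$.
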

\begin{proof}
Indeed, let us choose $\textbf{x}=x|1\in\textbf{L}^1$. Since $x=x^+-x^-$, we can assume w.l.g. that $x\geq 0$. Let us choose some sequence $\{x_n\}$ in $L^\infty(\EE)$ such that $x_n\nearrow x$ a.s. Then, by monotone convergence we have $\E_\PP[x_n-x|\F]\nearrow 0$. If we construct the conditional sequence $\{\textbf{x}_\textbf{n}\}\sqsubset \textbf{L}^\infty$, it follows that $\nlim_\textbf{n}\nVert \textbf{x}_\textbf{n}-\textbf{x}\nVert_1=\textbf{0}$.
\end{proof}

\begin{lem}
\label{lem: WCG}
The conditional unit ball of $\textbf{L}^\infty$ is conditionally weakly-$*$ sequentially compact
\end{lem}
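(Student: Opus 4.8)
The statement asserts that the conditional unit ball $\textbf{B}_{\textbf{L}^\infty}$ is conditionally weakly-$*$ sequentially compact, where $\textbf{L}^\infty$ is viewed as the conditional dual of $\textbf{L}^1$ via the duality $\E_\PP[\textbf{x}\textbf{y}|\F]$ from Example \ref{exam: Lp}. The plan is to reduce this to the classical weak-$*$ sequential compactness of the unit ball of $L^\infty(\EE)$ together with the stability/concatenation structure inherited from the conditional framework. First I would take a conditional sequence $\{\textbf{x}_\textbf{n}\}$ in $\textbf{B}_{\textbf{L}^\infty}$, i.e., a stable family $\{x_n\}_{n\in L^0(\N)}$ with $\Vert x_n|\F\Vert_\infty\leq 1$ for all $n\in L^0(\N)$; restricting to $n\in\N$ gives an ordinary sequence $\{x_n\}_{n\in\N}$ in $L^\infty_\F(\EE)$ whose elements lie in the unit ball. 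Since $L^\infty_\F(\EE)=L^0(\F)L^\infty(\EE)$ and $\Vert x_n|\F\Vert_\infty\leq 1$, each $x_n$ in fact lies in the classical unit ball of $L^\infty(\EE)$.

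The key step is then to invoke the classical fact that the unit ball of $L^\infty(\EE)=(L^1(\EE))^*$ is weakly-$*$ sequentially compact (this holds because $L^1(\EE)$ is separable when $(\Omega,\EE,\PP)$ is a standard probability space, or more generally one extracts a subsequence along a countable dense subset of $L^1$; alternatively one uses that a bounded sequence in $L^\infty$ has an a.s.-convergent Cesàro-type subsequence, but the cleanest route is the separable-predual argument). Thus there is a subsequence $\{x_{n_k}\}_{k\in\N}$ and some $x_0\in L^\infty(\EE)$ with $\E_\PP[x_{n_k} y]\to \E_\PP[x_0 y]$ for all $y\in L^1(\EE)$. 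I would then upgrade this pointwise (real-number) convergence to convergence of conditional expectations: for each $y\in L^1_\F(\EE)$, writing $y=\eta y'$ with $\eta\in L^0(\F)$, $y'\in L^1(\EE)$, a monotone-class / localization argument on the $\F$-measurable set $(|x_{n_k}-x_0|>\varepsilon)$-type events shows that $\E_\PP[x_{n_k} y|\F]\to \E_\PP[x_0 y|\F]$ in $L^0(\F)$; equivalently, using Remark \ref{rem: ASconvergence} and testing against indicators $1_A$, $A\in\F$, one reduces to the scalar statement. Concatenating: the conditional subsequence $\{\textbf{x}_{\textbf{n}_\textbf{k}}\}$ built from $\{x_{n_k}\}$ by the standard recipe (stable extension over $L^0(\N)$) then conditionally $\sigma(\textbf{L}^\infty,\textbf{L}^1)$-converges to $\textbf{x}_0=x_0|1$, and $\textbf{x}_0\in\textbf{B}_{\textbf{L}^\infty}$ since $\Vert x_0|\F\Vert_\infty\leq 1$ follows from lower semicontinuity of the $L^0$-norm under weak-$*$ limits.

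The main obstacle I anticipate is the passage from scalar weak-$*$ convergence to conditional (i.e., $L^0(\F)$-valued) weak-$*$ convergence, and in particular making sure the extracted subsequence works \emph{simultaneously} for all the ``conditional'' indices — that is, that a single classical subsequence $\{x_{n_k}\}$ induces a genuine conditional subsequence rather than only handling constant natural indices. This is where the stability axioms are essential: because the duality map and conditional expectation are stable, once convergence holds for the integer-indexed sequence against every $y\in L^1$, consistency and concatenation propagate it to the full conditional sequence, and the conditional $\sigma(\textbf{L}^\infty,\textbf{L}^1)$-topology is (by its definition via the conditional seminorms $|\E_\PP[\cdot\,\textbf{y}|\F]|$) exactly captured by this. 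A secondary technical point is confirming that no ``splitting'' of $1\in\A$ is needed, i.e., the limit exists on all of $1$ rather than only on pieces of a partition — this is automatic here since the classical argument already produces a global limit and the unit-ball bound is uniform; should a localization be required, one argues on the supremum $b$ of those $a\in\A$ on which a convergent conditional subsequence exists and rules out $b<1$ by a consistency argument on $b^c$, exactly as in the proof of Lemma \ref{lem: weakCompact}.
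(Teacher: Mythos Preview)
Your approach has a genuine gap at the ``upgrade'' step. Classical weak-$*$ convergence $x_{n_k}\to x_0$ in $\sigma(L^\infty(\EE),L^1(\EE))$ only gives, after testing against $1_A y$ for $A\in\F$, that $\E_\PP[(x_{n_k}-x_0)y\mid\F]\to 0$ \emph{weakly} in $L^1(\F)$ (or weak-$*$ in $L^\infty(\F)$). Conditional weak-$*$ convergence in $\sigma(\textbf{L}^\infty,\textbf{L}^1)$ requires $\E_\PP[(x_{n_k}-x_0)y\mid\F]\to 0$ in $L^0(\F)$, i.e.\ almost surely, and weak $L^1$ convergence does not imply this. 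Stability of the conditional expectation does not help: it lets you concatenate, but it cannot manufacture a.s.\ convergence from scalar convergence. A concrete counterexample is $\F=\EE$ (so $\E_\PP[\cdot\mid\F]$ is the identity) with $x_n$ the Rademacher functions on $[0,1]$: every classical subsequence converges weak-$*$ to $0$ but none converges in $L^0$, so your extracted classical subsequence cannot serve as the required conditional subsequence. The conditionally convergent subsequence that must exist here (by conditional Heine--Borel) is genuinely indexed by elements of $L^0(\N)$, not constants. A secondary issue is that your justification of the classical input assumes $L^1(\EE)$ is separable, which the paper does not.

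The paper's route avoids both problems by working entirely inside the conditional framework: it observes that $\textbf{L}^\infty$ (hence $\textbf{L}^2$) is conditionally dense in $\textbf{L}^1$ (Proposition~\ref{prop: condSimpleFunc}), so $\textbf{L}^1=\overline{\spa\,\textbf{B}_{\textbf{L}^2}}$ with $\textbf{B}_{\textbf{L}^2}$ conditionally weakly compact (conditional reflexivity of $\textbf{L}^2$). This exhibits $\textbf{L}^1$ as conditionally weakly compactly generated, and the conditional Amir--Lindenstrauss theorem \cite[Theorem~4.8]{key-38} then yields conditional weak-$*$ sequential compactness of $\textbf{B}_{(\textbf{L}^1)^*}=\textbf{B}_{\textbf{L}^\infty}$ directly, with no reduction to the scalar-valued setting.
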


\begin{proof}
In view of Proposition \ref{prop: condSimpleFunc}, we have that $\textbf{L}^2$ is conditionally dense in $\textbf{L}^1$. This mean that $\textbf{L}^1=\overline{\textbf{L}^2}=\overline{\spa \textbf{B}_{\textbf{L}^2}}$, where $\textbf{B}_{\textbf{L}^2}$ is the conditional unit ball of $\textbf{L}^2$. By the conditional version of Amir-Lindenstrauss (see \cite[Theorem 4.8]{key-38}), it follows that the conditional unit ball of $(\textbf{L}^1)^*=\textbf{L}^\infty$ is conditionally weakly-$*$ sequentially compact.  
\end{proof}

A similar result is proved in Lemma 2 of \cite{key-39}:

\begin{lem}
\label{lem: seq}
Let $\{y_n\}$ be a sequence in $L^0(\F)$ such that  $\esslimsup_n y_n=y$. Suppose that, for any $n\in L^0(\N)$ of the form $n=\sum_k 1_{a_k} n_k$, we define $y_n:=\sum_k 1_{a_k} y_{n_k}$. Then, there is a sequence $n_1<n_2<...$ in $L^0(\N)$, such that the sequence $\{y_{n_m}\}$ converges a.s to $y$.   
\end{lem}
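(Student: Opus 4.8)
The plan is to construct the ``randomized subsequence'' $n_1<n_2<\cdots$ recursively, arranging at the $m$-th stage that $y_{n_m}\geq (y\wedge m)-\tfrac{1}{m}$ almost surely, the truncation by $m$ being needed only to cover the set $\{y=+\infty\}$. Granting such a construction, for a.e.\ $\omega$ the strict inequalities force $n_m(\omega)\to\infty$, so $\{n_m(\omega)\}_m$ is an ordinary subsequence of $(k)_k$, whence $\limsup_m y_{n_m}(\omega)\leq\limsup_k y_k(\omega)$; and since $(y(\omega)\wedge m)-\tfrac{1}{m}\to y(\omega)$ (treating the cases $y(\omega)$ finite, $-\infty$, $+\infty$ separately) we also get $\liminf_m y_{n_m}(\omega)\geq y(\omega)$. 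Thus the two ingredients of the argument are: reduce $\esslimsup$ to a pointwise object, and perform a measurable selection at each stage.

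First I would record that for a countable family the essential supremum is the pointwise one: $z_m:=\esssup_{k\geq m}y_k=\sup_{k\geq m}y_k$ $\PP$-a.s., and likewise $\esslimsup_n y_n=\essinf_m z_m=\inf_m z_m=\limsup_k y_k$ $\PP$-a.s. Hence, fixing once and for all a single $\PP$-null set outside which all these countably many identities hold together with $z_m(\omega)\geq y(\omega)$ for every $m$, we have $z_m\searrow y$ and, crucially, $z_{j}(\omega)\geq y(\omega)$ for every (possibly random) index $j$.

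For the recursive step, set $n_0:=0$; given $n_{m-1}\in L^0(\N)$ put $M:=n_{m-1}+1$ and define, for $\omega$ outside the exceptional null set,
\[
N(\omega):=\min\Bigl\{k\geq M(\omega)\ :\ y_k(\omega)>(y(\omega)\wedge m)-\tfrac{1}{m}\Bigr\}.
\]
The set over which the minimum is taken is non-empty because $\sup_{k\geq M(\omega)}y_k(\omega)=z_{M(\omega)}(\omega)\geq y(\omega)\geq y(\omega)\wedge m>(y(\omega)\wedge m)-\tfrac{1}{m}$. Writing $D_k:=\{\omega:N(\omega)=k\}$, one checks that $\{D_k\}_{k\in\N}$ is a partition of $\Omega$, so $N$ is $\F$-measurable and $n_m:=\sum_k 1_{D_k}k$ is a genuine element of $L^0(\N)$ with $n_m\geq M>n_{m-1}$; moreover, by the very formula $y_n:=\sum_k 1_{a_k}y_{n_k}$ fixed in the statement, $y_{n_m}=\sum_k 1_{D_k}y_k$, which on each $D_k$ equals $y_k$ and hence exceeds $(y\wedge m)-\tfrac{1}{m}$ a.s.

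Iterating produces $n_1<n_2<\cdots$ with $y_{n_m}\geq(y\wedge m)-\tfrac{1}{m}$ a.s.\ for all $m$, and the first paragraph then yields $y_{n_m}\to y$ a.s. The only genuinely delicate point is the measurable selection: one must verify that the first-hitting index $N$ is an honest $\F$-measurable $\N$-valued function, that the pasting it produces agrees with the $y_n$ prescribed in the hypothesis, and — the real bookkeeping issue — that the inequality $z_{M}(\omega)\geq y(\omega)$ remains legitimate when $M$ is the random index $n_{m-1}+1$ rather than a constant; this is precisely why one fixes a single full-measure set on which all the countably many a.s.\ statements hold simultaneously before starting the recursion.
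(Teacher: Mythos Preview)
Your argument is correct. The paper itself does not supply a proof of this lemma: it simply states that ``a similar result is proved in Lemma~2 of \cite{key-39}'' (Kabanov--Stricker). Your construction---recursively defining $n_m$ as the first index $k\geq n_{m-1}+1$ at which $y_k>(y\wedge m)-\tfrac{1}{m}$, checking that this first-hitting index is $\F$-measurable, and then reading off $\liminf_m y_{n_m}\geq y$ from the lower bound together with $\limsup_m y_{n_m}\leq y$ from the fact that $n_m(\omega)\uparrow\infty$ pointwise---is exactly the standard measurable-selection proof, and is essentially the argument in Kabanov--Stricker. The truncation by $m$ to handle $\{y=+\infty\}$ and the care you take to fix one common full-measure set before beginning the recursion (so that $z_{M(\omega)}(\omega)\geq y(\omega)$ is legitimate for the random $M=n_{m-1}+1$) are the right technical touches.
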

%\begin{proof}
%%
%It is not difficult to prove that $\nlimsup_n y_n = \nlimsup_\textbf{n} y_\textbf{n}$. Then $\nsup_{\textbf{m}\geq\textbf{n}} y_\textbf{m}\searrow y$.
%%
%Let us proceed through an inductive process in $m\in\N$. For $m=1$, we can choose $\textbf{n}_1$ such that
%\[
%y - 1 < y_{\textbf{n}_1} \leq \nsup_{\textbf{m}> 1} y_\textbf{m}. 
%\]
%%
%Suppose that we have chosen $\textbf{n}_1<...<\textbf{n}_{m-1}$ for $m>1$. Then, we can take $\textbf{n}_{k}>\textbf{n}_{k-1}$ such that
%\[
%y - \frac{1}{m} < y_{\textbf{n}_{m}} \leq \nsup_{\textbf{m}> \textbf{m}-1} y_\textbf{m}. 
%\]
%%
%Then, $y_{\textbf{n}_{m}}$ converges a.s to $y$.
%%
%\end{proof}

The following results is a plain adaptation of results from \cite{key-37} to the present setting (see Lemma 2.16, Proposition 3.3 and Proposition 3.5).

\begin{lem}
\label{lem: Guo}
The following properties hold:
\begin{enumerate}
	%\item Let $\textbf{f}:\textbf{E}\rightarrow\textbf{R}$ be a conditional function and $S$ a subset of $E$, then
	%\[
	%\esssup\left\{f(x)\:;\: x\in S\right\}=\esssup\left\{f(x) \:;\: x\in s(S)\right\}
	%\]
	\item $s(L^\infty(\EE))=L^\infty_\F(\EE)$.
	\item Let $\boldsymbol\rho:\textbf{L}^\infty\rightarrow\textbf{R}$ be a conditional convex risk measure. Then, for $y\in L^1(\EE)$
	\[
	\rho^*(y)=\esssup\left\{\E_\PP[x y|\F]-\rho(x) \:;\:x\in L^\infty(\EE) \right\}.
	\]
\end{enumerate}
\end{lem}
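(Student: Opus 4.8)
The plan is to prove the two items separately; neither needs conditional-set machinery, only the description of the stable hull $s(\cdot)$ as the set of countable $\A$-concatenations $\sum_k 1_{a_k}x_k$ (see Proposition \ref{lem:connex}), the stability of $\rho$, and elementary properties of the $\F$-conditional expectation.

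For item (1), recall that $z\in L^\infty_\F(\EE)$ precisely when $\Vert z\,|\,\F\Vert_\infty\in L^0(\F)$, equivalently when $|z|\leq\eta$ for some $\eta\in L^0_+(\F)$. For the inclusion $s(L^\infty(\EE))\subseteq L^\infty_\F(\EE)$ I would take an arbitrary concatenation $\sum_k 1_{a_k}x_k$ with $\{a_k\}\in p(1)$ and $x_k\in L^\infty(\EE)$, and set $\eta:=\sum_k 1_{a_k}\Vert x_k\Vert_\infty$; since the $a_k$ partition $1$ this $\eta$ is finite a.s., hence lies in $L^0_+(\F)$, and $|\sum_k 1_{a_k}x_k|=\sum_k 1_{a_k}|x_k|\leq\eta$, so the concatenation belongs to $L^\infty_\F(\EE)$. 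For the reverse inclusion, given $x\in L^\infty_\F(\EE)$ I would pick $\eta\in L^0_+(\F)$ with $|x|\leq\eta$, set $a_k:=(k-1\leq\eta<k)\in\A$ for $k\geq1$, so that $\{a_k\}\in p(1)$, and put $x_k:=1_{a_k}x$, which is bounded by $k$ and hence lies in $L^\infty(\EE)$; then $x=\sum_k 1_{a_k}x_k\in s(L^\infty(\EE))$. This step is entirely routine.

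For item (2), note first that the generating function $\rho:L^\infty_\F(\EE)\to L^0(\F)$ of $\boldsymbol\rho$ is a convex risk measure in the sense of Definition \ref{defn: condRiskMeasure} --- conditional convexity implies in particular ordinary convexity --- so that $\rho^*$ is defined; moreover $\rho$ is stable, as is shown inside the proof of Lemma \ref{lem: cont}, where Lipschitz continuity gives $1_a\rho(x)=1_a\rho(1_ax)$. Next, for $x\in L^\infty_\F(\EE)$ and $y\in L^1(\EE)$ the product $xy$ is $\F$-conditionally integrable: choosing $\eta\in L^0_+(\F)$ with $|x|\leq\eta$ gives $\E_\PP[|xy|\,|\,\F]\leq\eta\,\E_\PP[|y|\,|\,\F]<\infty$ a.s., so $\E_\PP[xy\,|\,\F]\in L^0(\F)$ and both sides of the asserted identity are meaningful. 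Write $M:=\esssup\{\E_\PP[x'y\,|\,\F]-\rho(x')\:;\:x'\in L^\infty(\EE)\}$; the inequality $M\leq\rho^*(y)$ is immediate from $L^\infty(\EE)\subseteq L^\infty_\F(\EE)$. For the converse, fix $x\in L^\infty_\F(\EE)$ and use item (1) to write $x=\sum_k 1_{a_k}x_k$ with $x_k\in L^\infty(\EE)$ and $\{a_k\}\in p(1)$; then, by stability of $\rho$ and $L^0(\F)$-linearity of the conditional expectation,
\[
\E_\PP[xy\,|\,\F]-\rho(x)=\sum_k 1_{a_k}\bigl(\E_\PP[x_ky\,|\,\F]-\rho(x_k)\bigr)\leq\sum_k 1_{a_k}M=M,
\]
and taking the $\esssup$ over $x\in L^\infty_\F(\EE)$ yields $\rho^*(y)\leq M$, whence equality.

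The only step needing a word of justification --- and the main, rather mild, obstacle --- is the interchange $\E_\PP[\sum_k 1_{a_k}x_ky\,|\,\F]=\sum_k 1_{a_k}\E_\PP[x_ky\,|\,\F]$, which I would deduce from the $\F$-measurability of the indicators $1_{a_k}$ and the conditional dominated convergence theorem applied to the partial sums $\sum_{k\leq N}1_{a_k}x_ky$, each dominated in absolute value by the $\F$-conditionally integrable random variable $|xy|$. Everything else is bookkeeping, which is exactly why this lemma is a plain adaptation of the corresponding statements in \cite{key-37}.
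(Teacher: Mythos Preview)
Your proof is correct. The paper itself does not supply an argument for this lemma; it only states that ``[t]he following results is a plain adaptation of results from \cite{key-37} to the present setting (see Lemma 2.16, Proposition 3.3 and Proposition 3.5),'' so there is no paper-internal proof to compare against. Your argument is precisely the kind of routine verification the paper is gesturing at: for (1), the level-set partition $a_k=(k-1\leq\eta<k)$ is the standard trick for writing an element of $L^\infty_\F(\EE)$ as a countable $\F$-concatenation of genuinely bounded pieces; for (2), stability of $\rho$ plus item (1) reduces the supremum over $L^\infty_\F(\EE)$ to the supremum over $L^\infty(\EE)$, exactly as you wrote. One minor remark: you do not need to invoke the proof of Lemma~\ref{lem: cont} to obtain stability of $\rho$, since $\boldsymbol\rho$ being a conditional function already means, by definition, that its generating function $\rho$ is stable; your detour through Definition~\ref{defn: condRiskMeasure} is harmless but unnecessary.
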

%\begin{proof}
%\begin{enumerate}
	%\item It can be easily checked.
	%\item It suffices to show $L^\infty_\F(\EE)\subset s(L^\infty(\EE))$. Indeed, any $x\in L^\infty_\F(\EE)$ is of the form $x=\xi_0 x_\infty$ with $\xi_0\in L^0(\F)$ and $x_\infty\in L^\infty(\EE)$. For each $k\in\N$, let us put $a:=(k-1\leq|\xi_0|<k)$. Then $\{a_k\}\in p(1)$. We obtain that $x=\sum 1_{a_k}x_\infty\xi_0\in s(L^\infty(\EE))$ as $1_{a_k}x_\infty\xi_0\in L^\infty(\EE)$.
	%\item By definition we have 
	%\[
	%\rho^*(y)=\esssup\left\{\E_\PP[x y|\F]-\rho(x) \:;\:x\in L^\infty_\F(\EE) \right\}.
	%\]
	%
	%Then, the result follows from 1 and 2, and by noting that, for fixed $y\in L^1(\EE)$, the application $x\mapsto \E_\PP[x y|\F]-\rho(x)$ is stable.
%\end{enumerate}
%
%
%\end{proof}

Now, let us turn to prove the main theorem.

\begin{proof}
We will follow the line $3\Leftrightarrow 2\Leftrightarrow 4\Leftrightarrow 5\Leftrightarrow  1$.

%We will first prove $3\Leftrightarrow 2\Leftrightarrow 4\Leftrightarrow 5$ by reducing the problems known cases. Later we will prove
%$5\Leftrightarrow  1$ by using the theory of conditional sets.

$3\Leftrightarrow 2$ is just Proposition \ref{prop: equivFatou}.

$2\Rightarrow 4$: We shall use a method  similar to the scalarization technique used by  Deflefsen and Scandolo in the proof of  $a\Rightarrow c$ in \cite[Theorem 1]{key-20}. First, we can assume, by applying a translation if necessary, that $\rho(0)=0$. If so, for $x\in L^\infty (\mathcal{E})$, due to 4 of Lemma \ref{lem: cont} we have that $|\rho(x)|\leq\Vert x|\F\Vert_\infty\leq \Vert x\Vert_\infty$, hence  $\rho(L^\infty (\mathcal{E}))\subset L^\infty (\mathcal{F})$.

 Let us fix $x\in L^\infty(\EE)$ and put $\rho_0=\rho|_{L^\infty(\EE)}$. Since $\rho(x)\geq \E_\PP[x y|\F]-\rho^*_0(y)$ for all $y\in L^1(\EE)$, it suffices to show that there exists $y\in L^1(\EE)$, with $y\leq 0$ and $\E_\PP[y|\F]=-1$, such that 
\begin{equation}
\label{eq: prepLeb}
\E_\PP[\rho(x)] =\E_\PP [\E_\PP[x y|\F]-\rho^*_0(y)].
\end{equation}

Let us take $\rho':L^\infty(\EE)\rightarrow\R$, where $\rho'(x):=\E_\PP[\rho(x)]$ for each $x\in L^\infty(\EE)$, which is a convex risk measure. Moreover, we claim that $\rho'$ has the Lebesgue property. Indeed, given a bounded sequence $\{x_n\}\subset L^\infty(\EE)$ which converges  a.s. to $x$, having $\rho_0$ the Lebesgue property, it holds that $\lim_n \rho_0(x_n)=\rho_0(x)$. Further, due to 4 of Lemma  of \ref{lem: cont}, we have that $\{\rho_0(x_n)\}$ is bounded. Thus, by dominated convergence, we conclude that  $\lim_n \rho'(x_n)=\rho'(x)$. Now, by the original Jouini-Schachermayer-Touzi Theorem, it follows that $\rho'(x)=\E_\PP[x y] - (\rho')^*(y)$ for some $y\in L^1(\EE)$ with $y\leq 0$ and $\E_\PP[y]=-1$, where $(\rho')^*(y)=\sup\left\{\E_\PP[x y]-\rho'(x) \:;\: x\in L^\infty(\EE)\right\}$. 
 
Furthermore, we claim that $\E_\PP[y|\F]=-1$. Arguing by way of contradiction, suppose for instance that $a:=(\E_\PP[y|\F]>-1)$ has positive probability. If so, for fixed $\lambda\in\R^+$, we have 
\[
(\rho')^*(y)\geq \E_\PP[1_a \lambda y] - \rho'(\lambda 1_a)=\lambda \left( \E_\PP[1_a y] + \PP(a)\right)=\lambda \E_\PP\left[ 1_a \left(\E_\PP[ y|\mathcal{F}] + 1\right)\right].
\]
But this is impossible, since $\lambda$ is arbitrarily large and $(\rho')^*(y)<+\infty$. 

In addition, by using the same techniques as in \cite[Theorem 1]{key-20}, we can obtain that $\E_\PP[\rho_0^*(y)]=(\rho')^*(y)$. Therefore, we finally have 
\[
\E_\PP[\E_\PP[x y|\F]- \rho_0^*(y)]=\E_\PP[x y]-(\rho')^*(y)=\rho'(y)=\E_\PP[\rho(x)].
\]
%obtaining (\ref{eq: prepLeb}).

$4\Rightarrow 2$: We will use the same reduction trick again. Let $\{x_n\}$ be a bounded sequence in $L^\infty(\EE)$ such that $x_n$ converges a.s. to $x$.  Since $\rho$ has the Fatou property, we have that $\rho(x)\leq \underset{n} \essliminf \rho (x_n)$. It suffices to show that $\rho(x)\geq \underset{n} \esslimsup \rho (x_n)$. Let us argue to get contradiction. Suppose that there exists $a\in\A$, $a\neq 0$ such that  $\rho(x)< \underset{n} \esslimsup \rho (x_n)$ on $a$. We can assume $a=1$ w.l.g. Then,  let us consider $\rho'(x):=\E_\PP[\rho(x)]$ for $x\in L^\infty(\EE)$. Thereby, we have
\[
\rho'(x)< \E_\PP[\underset{n} \esslimsup \rho (x_n)].
\]

Due to Lemma \ref{lem: seq}, we can construct a bounded sequence $\{z_n\}$  in a such a way that $\lim_n \rho (z_n)=\underset{n} \esslimsup \rho (x_n)$, and which converges a.s  to $x$. Then, by dominated convergence, we obtain
\begin{equation}
\label{eq: lebIn}
\lim_n\E_\PP[\rho (z_n)]=\E_\PP[\lim_n\rho (z_n)]=\E_\PP[\underset{n} \esslimsup \rho (x_n)]>\rho'(x).
\end{equation} 

But, on the other hand, by assumption we have that, for each $x\in L^\infty(\EE)$, there is $y\in L^1(\EE)$ with $y\leq 0$ and $\E_\PP[y|\F]=-1$, such that $\rho'(x)=\E_\PP[\E_\PP[x y|\F]-\rho_0^*(y)]=\E_\PP[x y] - \E_\PP[\rho_0^*(y)]=\E_\PP[x y] - (\rho')^*(y)$. Also, by dominated convergence, we obtain  $\rho'$ has the Fatou property. In view of the original Jouini-Schachermayer-Touzi Theorem, it follows that $\rho'$ has the Lebesgue property. Hence, $\lim_n\E_\PP[\rho (z_n)]=\rho'(x)$, which is a contradiction, in view of (\ref{eq: lebIn}).

$4\Rightarrow 5:$ By 2 of Lemma \ref{lem: Guo}, it holds that $\rho^*(y)=\rho_0^*(y)$ for all $y\in L^1(\EE)$. Now, let us fix $x\in L^\infty_\F(\EE)$. Due to 1 of Lemma \ref{lem: Guo}, with have that $x=\sum 1_{a_k} x_k$ with $x_k\in L^\infty(\EE)$ and $\{a_k\}\in p(1)$. By assumption, we can choose $y\in L^1(\EE)$ satisfying 4. Thereby, by using the fact that $\rho$ is stable, we have
\[
\rho(x)=\sum 1_{a_k}\rho(x_k)=\sum 1_{a_k}\left( \E_\PP[x_k y|\F] - \rho_0^*(y) \right)=\E_\PP[x y|\F] - \rho^*(y).
\]
$5\Rightarrow 4:$  For given $x\in L^\infty(\EE)$, by assumption there exists $y\in L^1_\F(\EE)$ satisfying 5, which is of the form $y=\xi_0 y_1$ with $\xi_0\in L^0(\F)$ and $y_1\in L^1(\EE)$. It follows that $-1=\E_\PP[y|\F]=\xi_0$, hence $y\in L^1(\EE)$.

$5\Rightarrow 1$: Due to Lemma \ref{lem: WCG}, we know that the conditional unit ball of $\textbf{L}^\infty$ is conditionally weakly-$*$ sequentially compact, in particular, it is conditionally weakly convex block compact. Then, Theorem \ref{thm: James} tells us that $\textbf{V}_{\boldsymbol\rho^*}(\textbf{c})$ is conditionally weakly compact.

$1\Rightarrow 5$: Let us fix $\textbf{x}\in\textbf{L}^\infty$. Since $\rho|_{L^\infty(\EE)}$ has the Fatou property, due to Theorem \ref{thm: Fatou}, we have that 
\[
\boldsymbol\rho(\textbf{x})=\nsup \left[\E_\PP[\textbf{x}\textbf{y}|\F]-\boldsymbol\rho^*(\textbf{y})\:;\: \textbf{y}\leq \textbf{0}, \E_\PP[\textbf{y}|\F]=-\textbf{1}\right].
\]
Thus, we can take a conditional sequence $\{\textbf{y}_\textbf{n}\}_\textbf{n}$ in $\textbf{L}^1$ with $\textbf{y}_\textbf{n}\leq \textbf{0}$  and $\E\left[\textbf{y}_\textbf{n}\mid\mathcal{F}\right]=-\textbf{1}$ for each $\textbf{n}\in \textbf{N}$, such that $\boldsymbol\rho(\textbf{x})=\nlim(\E_\PP[\textbf{x}\textbf{y}_\textbf{n}|\F]-\boldsymbol\rho^*(\textbf{y}_\textbf{n}))$. This means that the conditional sequence $\{\boldsymbol\rho^*(\textbf{y}_\textbf{n})\}$ is conditionally bounded, and  we therefore have $\{\textbf{y}_\textbf{n}\}\sqsubset \textbf{V}_{\boldsymbol\rho^*}(\textbf{c})$, for some $\textbf{c}\in \textbf{R}^+$. Then, the conditional weak compactness and the conditional Eberlein-\v{S}mulian theorem (see \cite[Theorem 4.7]{key-38}) allows us to suppose that $\{\textbf{y}_\textbf{n}\}$ conditionally weakly converges to some $\textbf{y}\in \textbf{L}^1$ with $\textbf{y}\leq \textbf{0}$  and $\E\left[\textbf{y}\mid\mathcal{F}\right]=-\textbf{1}$. Consequently, we have $\nliminf_\textbf{n}\boldsymbol\rho^*(\textbf{y}_\textbf{n})\geq \boldsymbol\rho^*(\textbf{y})$. Thus, we finally have $\boldsymbol\rho(\textbf{x})=\nlim(\E_\PP[\textbf{x}\textbf{y}_\textbf{n}|\F]-\boldsymbol\rho^*(\textbf{y}_\textbf{n}))=\E_\PP[\textbf{x}\textbf{y}|\F]-\nliminf_\textbf{n}\boldsymbol\rho^*(\textbf{y}_\textbf{n})\leq \E_\PP[\textbf{x}\textbf{y}|\F]-\boldsymbol\rho^*(\textbf{y})$, which means that $\boldsymbol\rho(\textbf{x})=\E_\PP[\textbf{x}\textbf{y}|\F]-\boldsymbol\rho^*(\textbf{x})$.  
\end{proof}

\begin{appendix}
\section{Appendix}

Let us  prove Theorem \ref{thm: EberleinSmulianII}. First, we need to prove some preliminary results:

\begin{lem}
\label{lem: finiteSet} 
Let $\{\textbf{x}_\textbf{n}\}$ be a conditional sequence in a conditional set $\textbf{E}$. Suppose that the conditional subset $[\textbf{x}_\textbf{n}\:;\:\textbf{n}\in\textbf{N}]$ is conditionally finite, then there exists some $\textbf{x}\in\textbf{E}$  such that 
\[
\textnormal{for every }\textbf{m}\in\textbf{N}\textnormal{ there is }\textbf{n}\geq\textbf{m}\textnormal{ such that }\textbf{x}_\textbf{n}=\textbf{x}.
\]
\end{lem}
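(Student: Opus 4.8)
The plan is to reduce the statement to a concrete ``finitely many prescribed values'' situation and then run a conditional pigeonhole argument. Write $\textbf{F}:=[\textbf{x}_\textbf{n}\:;\:\textbf{n}\in\textbf{N}]$, which is the conditional set generated by the stable set $s(\{x_n\:;\:n\in\N\})$. Since $\textbf{F}$ is conditionally finite there is a conditional bijection $\textbf{g}:\textbf{F}\rightarrow[\textbf{k}\:;\:\textbf{1}\leq\textbf{k}\leq\textbf{p}]$ for some $\textbf{p}\in\textbf{N}$, say $p=\sum_k p_k|a_k$ with $p_k\in\N$ and $\{a_k\}\in p(1)$. First I would dispose of the possibly ``conditional'' cardinality $\textbf{p}$: arguing consistently on each $a_k$ one has a conditional bijection onto $[\textbf{k}\:;\:\textbf{1}\leq\textbf{k}\leq\textbf{p}_k]$ with the constant $p_k$, so it suffices to treat the case $\textbf{p}=p|1$ with $p\in\N$ constant (the elements obtained on the pieces $a_k$, and their witnessing indices, are glued along the countable partition $\{a_k\}$ by the stability axiom). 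So assume $p\in\N$ is constant; the inverse bijection then yields fixed $v_1,\dots,v_p\in E$ with $s(\{x_n\:;\:n\in\N\})=s(\{v_1,\dots,v_p\})$, hence every $x_n$ ($n\in\N$) can be written as a concatenation $x_n=\sum_{i=1}^p v_i|b^n_i$ along some $\{b^n_i\}_{i=1}^p\in p(1)$, i.e. $x_n|b^n_i=v_i|b^n_i$ for each $i$.

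The core step is the choice of the recurrent value. Put $L_i:=\bigvee_m\bigwedge_{n\geq m}b^n_i$ for $i=1,\dots,p$. The key (and essentially only nontrivial) observation is that $\bigvee_{i=1}^p L_i=1$: choosing for each $n$ a representative partition $\{B^n_i\}_{i=1}^p$ of $\Omega$, for almost every $\omega$ there is for each $n$ a unique index $i$ with $\omega\in B^n_i$; since these indices lie in the finite set $\{1,\dots,p\}$, some index $i$ occurs for infinitely many $n$, whence $\omega\in\bigcap_m\bigcup_{n\geq m}B^n_i$, a representative of $L_i$. Disjointifying, set $d_1:=L_1$ and $d_i:=L_i\wedge(d_1\vee\dots\vee d_{i-1})^c$ for $i>1$; then $\{d_i\}_{i=1}^p\in p(1)$ and $d_i\leq L_i$, and I would take $\textbf{x}\in\textbf{E}$ to be (the conditional element given by) the concatenation $\sum_{i=1}^p v_i|d_i$.

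It then remains to verify that $\textbf{x}$ is attained conditionally cofinally. For $n\in\N$ put $\gamma_n:=\bigvee_{i=1}^p(b^n_i\wedge d_i)$. Applying consistency on $b^n_i\wedge d_i\leq b^n_i$ and on $b^n_i\wedge d_i\leq d_i$ one gets $x_n|(b^n_i\wedge d_i)=v_i|(b^n_i\wedge d_i)=x|(b^n_i\wedge d_i)$ for each $i$; since $\{b^n_i\wedge d_i\}_{i=1}^p$ is a partition of $\gamma_n$, uniqueness of concatenation in $\textbf{E}|\gamma_n$ gives $x_n|\gamma_n=x|\gamma_n$. For a \emph{constant} $m\in\N$ one has $\bigvee_{n\geq m}\gamma_n=\bigvee_i\big(d_i\wedge\bigvee_{n\geq m}b^n_i\big)=\bigvee_i d_i=1$, using $d_i\leq L_i\leq\bigvee_{n\geq m}b^n_i$. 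Disjointifying the $\gamma_n$ ($n\geq m$) yields $\{\tilde a_l\}\in p(1)$ and naturals $n_l\geq m$ with $\tilde a_l\leq\gamma_{n_l}$; letting $n:=\sum_l n_l|\tilde a_l\in L^0(\N)$ (so $n\geq m$), stability of the conditional sequence gives $x_n=\sum_l x_{n_l}|\tilde a_l$, and then consistency ($\tilde a_l\leq\gamma_{n_l}$, $x_{n_l}|\gamma_{n_l}=x|\gamma_{n_l}$) together with uniqueness of concatenation yield $x_n=x$, i.e. $\textbf{x}_\textbf{n}=\textbf{x}$ with $\textbf{n}\geq\textbf{m}$. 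Finally, for an arbitrary $\textbf{m}\in\textbf{N}$ with $m=\sum_k m_k|e_k$ ($m_k\in\N$), the constant case applied on each $e_k$ and glued along $\{e_k\}$ produces the desired $\textbf{n}\geq\textbf{m}$ with $\textbf{x}_\textbf{n}=\textbf{x}$.

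I expect the only delicate points to be organizational rather than conceptual: reducing the possibly conditional cardinality $\textbf{p}$ to a constant, and, throughout, passing back and forth between constant and conditional natural numbers by gluing locally obtained data along countable partitions (legitimate precisely because $\textbf{E}$ is a conditional set and the conditional sequence is stable). Once this bookkeeping is in place, everything hinges on the elementary pointwise pigeonhole $\bigvee_i L_i=1$.
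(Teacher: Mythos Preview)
Your argument is sound in spirit but contains a slip in the definition of $L_i$: you write $L_i:=\bigvee_m\bigwedge_{n\geq m}b^n_i$, which is the \emph{liminf}, yet your pigeonhole argument concludes $\omega\in\bigcap_m\bigcup_{n\geq m}B^n_i$, which represents the \emph{limsup} $\bigwedge_m\bigvee_{n\geq m}b^n_i$, and your later step $d_i\leq L_i\leq\bigvee_{n\geq m}b^n_i$ also uses the limsup property. With the liminf the claim $\bigvee_i L_i=1$ is false in general (take $p=2$ and alternate the partition), so you should swap the order of the lattice operations in the definition of $L_i$. Once this is corrected, the remaining steps go through as you outline.

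Your route is genuinely different from the paper's. The paper stays entirely inside the conditional formalism: writing the range as $[\textbf{z}_\textbf{k}:\textbf{1}\leq\textbf{k}\leq\textbf{m}]$, it sets $\textbf{l}_\textbf{k}:=\nmax[\textbf{n}\in\textbf{N}:\textbf{x}_\textbf{n}=\textbf{z}_\textbf{k}]\in\textbf{N}\sqcup[+\infty]$, argues that $\nmax_\textbf{k}\textbf{l}_\textbf{k}=+\infty$ (otherwise any $\textbf{x}_\textbf{p}$ with $\textbf{p}$ beyond this max would still have to equal some $\textbf{z}_\textbf{k}$, forcing $\textbf{p}\leq\textbf{l}_\textbf{k}$), and then uses that a conditional maximum over a conditionally finite index set is attained to extract a $\textbf{k}$ with $\textbf{l}_\textbf{k}=+\infty$. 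This is a three-line conditional pigeonhole that never unpacks partitions. Your approach instead descends to the underlying measure algebra, runs a pointwise pigeonhole on representatives, and then reassembles everything by gluing along countable partitions; it is more explicit and makes the dependence on the measure-algebra setting (countable joins suffice) transparent, at the cost of considerably more bookkeeping in passing between constant and conditional natural numbers. Both arguments ultimately encode the same finite-range pigeonhole, but the paper's version shows how the conditional calculus is designed to absorb exactly the gluing you carry out by hand.
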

\begin{proof}
Let us put $[\textbf{x}_\textbf{n}\:;\:\textbf{n}\in\textbf{N}]=[\textbf{z}_\textbf{n}\:;\:\textbf{1}\leq\textbf{k}\leq\textbf{m}]$, and  for each $\textbf{k}\in\textbf{N}$ with $\textbf{1}\leq\textbf{k}\leq\textbf{m}$, let us define
\[
\textbf{l}_\textbf{k}:=\nmax[\textbf{n}\in\textbf{N}\:;\:\textbf{x}_\textbf{n}=\textbf{z}_\textbf{k}]\in\textbf{N}\sqcup [+\infty].
\]
We claim that $\textbf{l}:=\underset{\textbf{1}\leq\textbf{k}\leq\textbf{m}}\nmax\textbf{l}_\textbf{k}=+\infty$. Indeed, let us suppose $l|a\in\textbf{N}$ for some $a\in\A$, $a\neq 0$. We can assume $a=1$ w.l.g. Then, for $\textbf{p}>\textbf{l}$ we have that $\textbf{x}_\textbf{p}\in[\textbf{z}_\textbf{k}\:;\:\textbf{1}\leq\textbf{k}\leq\textbf{m}]$; and therefore, $\textbf{x}_\textbf{p}=\textbf{z}_\textbf{k}$ for some $\textbf{1}\leq\textbf{k}\leq\textbf{m}$. But necessarily $\textbf{p}\leq\textbf{l}_\textbf{k}\leq\textbf{l}$, a contradiction. 

Further, since the conditional maximum of a conditionally finite set is attained, we have $\textbf{l}_\textbf{k}=\textbf{l}=\infty$ for some $\textbf{1}\leq\textbf{k}\leq\textbf{m}$. It suffices to take $\textbf{x}:=\textbf{z}_\textbf{k}$.
\end{proof}

\begin{prop}
\label{prop: clusterPoint}
Let $(\textbf{X},\mathcal{T})$ be a conditionally compact topological space, then every conditional sequence has a conditional cluster point.
\end{prop}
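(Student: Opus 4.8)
The plan is to transplant the classical argument: in a compact space the closures of the tails of a net form a decreasing family of closed sets with the finite intersection property, hence have a common point, which is a cluster point of the net; the only extra care is the bookkeeping with the conditioning element $a\in\A$ and with the conditional index set $\textbf{N}$. First I would, for each $\textbf{n}\in\textbf{N}$, form the conditional tail $\textbf{S}_\textbf{n}:=[\,\textbf{x}_\textbf{m}\:;\:\textbf{m}\in\textbf{N},\ \textbf{m}\geq\textbf{n}\,]$ and its conditional closure $\textbf{A}_\textbf{n}:=\overline{\textbf{S}_\textbf{n}}$ in $(\textbf{X},\mathcal{T})$. Since $\textbf{x}_\textbf{n}\in\textbf{S}_\textbf{n}$, each $\textbf{S}_\textbf{n}$, and hence each $\textbf{A}_\textbf{n}$, is on $1$, and each $\textbf{A}_\textbf{n}$ is conditionally closed. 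Moreover, if $\textbf{n}\leq\textbf{n}'$ then $\textbf{S}_{\textbf{n}'}\sqsubset\textbf{S}_\textbf{n}$, and conditional closure is monotone for $\sqsubset$, so $\textbf{A}_{\textbf{n}'}\sqsubset\textbf{A}_\textbf{n}$; thus $\{\textbf{A}_\textbf{n}\}_{\textbf{n}\in\textbf{N}}$ is conditionally decreasing.

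Next I would check the conditional finite intersection property and invoke conditional compactness. Given any conditionally finite $\textbf{I}\sqsubset\textbf{N}$ on $1$, it has a conditional maximum $\textbf{n}^\star:=\nmax[\,\textbf{n}\:;\:\textbf{n}\in\textbf{I}\,]$, attained in $\textbf{I}$ because $\textbf{N}$ is conditionally totally ordered; by the monotonicity just observed, $\textbf{A}_{\textbf{n}^\star}\sqsubset\sqcap_{\textbf{n}\in\textbf{I}}\textbf{A}_\textbf{n}$, and $\textbf{A}_{\textbf{n}^\star}$ is on $1$, so the conditional intersection over every conditionally finite subfamily is on $1$. Conditional compactness of $(\textbf{X},\mathcal{T})$ then forces $\textbf{A}:=\sqcap_{\textbf{n}\in\textbf{N}}\textbf{A}_\textbf{n}$ to be on $1$: if it lived only on some $a<1$, then on $a^c$ the conditional complements $\textbf{A}_\textbf{n}^\sqsubset$ would form a conditional open cover of $\textbf{X}|a^c$, a conditionally finite subcover of which would produce a conditionally finite subfamily of the $\textbf{A}_\textbf{n}$ whose conditional intersection is $\textbf{X}|0$ over $a^c$, contradicting the previous sentence. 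I would then pick any conditional element $\textbf{x}\in\textbf{A}$ (a conditional subset on $1$ has conditional elements).

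Finally I would verify that this $\textbf{x}$ is a conditional cluster point of $\{\textbf{x}_\textbf{n}\}$. Fix a conditional neighborhood $\textbf{U}$ of $\textbf{x}$ on $1$ and some $\textbf{n}\in\textbf{N}$, and choose a conditionally open $\textbf{O}$ with $\textbf{x}\in\textbf{O}\sqsubset\textbf{U}$. Since $\textbf{x}\in\textbf{A}\sqsubset\textbf{A}_\textbf{n}=\overline{\textbf{S}_\textbf{n}}$, the conditional intersection $\textbf{O}\sqcap\textbf{S}_\textbf{n}$ is on $1$ and so has a conditional element $\textbf{y}$. Now $\textbf{y}\in\textbf{S}_\textbf{n}$, and because the conditional sequence is a stable function on $\textbf{N}$ the conditional elements of $\textbf{S}_\textbf{n}$ are exactly the $\textbf{x}_\textbf{m}$ with $\textbf{m}\geq\textbf{n}$: any concatenation $\sum_k\textbf{x}_{\textbf{m}_k}|a_k$ with $\textbf{m}_k|a_k\geq\textbf{n}|a_k$ for every $k$ equals $\textbf{x}_\textbf{m}$ for $\textbf{m}:=\sum_k\textbf{m}_k|a_k$, and $\textbf{m}\geq\textbf{n}$ since the conditional order on $\textbf{N}$ is stable. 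Hence $\textbf{y}=\textbf{x}_\textbf{m}$ for some $\textbf{m}\geq\textbf{n}$ with $\textbf{x}_\textbf{m}\in\textbf{O}\sqsubset\textbf{U}$, which is precisely the cluster-point condition; this finishes the proof.

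The step I expect to be the main obstacle is the passage in the second paragraph from conditional compactness (in the conditional-open-cover sense of \cite{key-7}) to the conditional finite intersection principle, with the correct handling of the conditioning element $a\in\A$ — that is, checking that once every conditionally finite subintersection is on $1$ one may consistently argue on $a^c$ to rule out $a<1$. The other mildly delicate point, the identification in the third paragraph of the conditional elements of $\textbf{S}_\textbf{n}$ with $\{\textbf{x}_\textbf{m}\:;\:\textbf{m}\geq\textbf{n}\}$, uses only stability of the conditional sequence together with stability of the conditional order on $\textbf{N}$ and is routine.
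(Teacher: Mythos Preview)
Your argument is correct and is the classical ``closed tails have the FIP'' proof carried over to the conditional setting; it is genuinely different from what the paper does. The paper argues by contradiction via open covers: assuming no cluster point anywhere (after localizing on the right $a\in\A$), it first invokes Lemma~\ref{lem: finiteSet} to rule out that $[\textbf{x}_\textbf{n}]$ is conditionally finite on any $b>0$, then shows that every point has a conditionally open neighborhood meeting $[\textbf{x}_\textbf{n}]$ in a conditionally finite set, and extracts from the resulting cover a conditionally finite subcover, forcing $[\textbf{x}_\textbf{n}]$ itself to be conditionally finite --- a contradiction. Your route instead intersects the closures of the tails and reads off a cluster point directly.

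What each approach buys: the paper's proof stays strictly within the open-cover definition of conditional compactness from \cite{key-7}, at the price of the auxiliary Lemma~\ref{lem: finiteSet} and a somewhat more elaborate contradiction. Your proof is shorter and closer to the standard net argument, but it tacitly uses the conditional FIP characterization of compactness. Your sketch of that step (argue on $a^c$, take complements, extract a conditionally finite subcover) is right; just be explicit that the conditionally finite index set $\textbf{I}$ you obtain lives a priori over $\A_{a^c}$ and must be glued with, say, $[\textbf{1}]|a$ to get a conditionally finite $\textbf{I}'\sqsubset\textbf{N}$ on $1$ before reaching the contradiction with the first paragraph. Your final identification of the conditional elements of $\textbf{S}_\textbf{n}$ with $\{\textbf{x}_\textbf{m}:\textbf{m}\geq\textbf{n}\}$ via stability of the sequence and of the order on $\textbf{N}$ is indeed routine.
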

\begin{proof}
Let $\{\textbf{x}_\textbf{n}\}$ be a conditional sequence in $\textbf{X}$. Let
\[
a:=\vee\left\{b\in\A\:;\:\{\textbf{x}_\textbf{n}\}|b\textnormal{ has a conditional cluster point in }\textbf{X}|b\right\}.
\]
Arguing by way of contradiction, let us suppose $a<1$. We can assume $a=0$ w.l.g. If so, due to Lemma \ref{lem: finiteSet}, $\{\textbf{x}_\textbf{n}\}|b$ is not conditionally finite for every $b\in\A$, $b\neq 0$. Then, we claim that, for each $\textbf{x}\in\textbf{E}$, there exists $\textbf{O}_\textbf{x}\in\mathcal{T}$ such that
\[
\begin{array}{cc}
\textbf{O}_\textbf{x}\sqcap[\textbf{x}_\textbf{n}]=\textbf{F}_\textbf{x}, & \textnormal{ where }\textbf{F}_\textbf{x}\textnormal{ is conditionally finite (not necessarily on $1$)}.
\end{array}
\] 
Indeed, let us fix $\textbf{x}\in \textbf{X}$ and let us put
\[
b:=\vee\left\{a\in\A\:;\: \exists\textbf{O}\in\mathcal{T},\:\textbf{x}\in\textbf{O},\: [\textbf{x}_\textbf{n}]|a\sqcap\textbf{O}|a\textnormal{ is conditionally finite}\right\}.
\]

Note that $b$ is attained. Besides, we have that $b=1$; otherwise,  we would have that for each $\textbf{O}\in\mathcal{T}$ with $\textbf{x}\in\textbf{O}$, $\nmax[\textbf{n}\:;\: \textbf{x}_\textbf{n}\in \textbf{O}]=+\infty$ on $b^c$, and $\textbf{x}|b^c$ would be a conditional cluster point of $[\textbf{x}_\textbf{n}]|b^c$. 

Now, for each conditionally finite subset $\textbf{F}$ of $[\textbf{x}_\textbf{n}]$ (not necessarily on $1$), including $[\textbf{x}_\textbf{n}]|0$, let
\[
\textbf{O}_\textbf{F}:=\nint\left([\textbf{x}_\textbf{n}]^\sqsubset\sqcup\textbf{F} \right).
\]
We have that $\textbf{O}_\textbf{x}\sqsubset\textbf{O}_{\textbf{F}_\textbf{x}}$, hence $[\textbf{O}_\textbf{F}\:;\:\textbf{F}\textnormal{ conditionally finite}]$ is a conditional open covering of $\textbf{X}$, which is conditionally compact. Thus, there exists a conditionally finite collection $[\textbf{F}_\textbf{k}\:;\:1\leq\textbf{k}\leq\textbf{m}]$ of conditionally finite subsets of $\textbf{N}$ such that
\[
\textbf{X}=\underset{1\leq\textbf{k}\leq\textbf{m}}\sqcup\textbf{O}_{\textbf{F}_\textbf{k}}
=\underset{1\leq\textbf{k}\leq\textbf{m}}\sqcup\left([\textbf{x}_\textbf{n}]^\sqsubset\sqcup\textbf{F}_\textbf{k}\right)
=[\textbf{x}_\textbf{n}]^\sqsubset\sqcup\underset{1\leq\textbf{k}\leq\textbf{m}}\sqcup\textbf{F}_\textbf{k}.
\]
But this means that
\[
[\textbf{x}_\textbf{n}]\sqsubset \underset{\textbf{1}\leq\textbf{k}\leq\textbf{m}}\sqcup\textbf{F}_\textbf{k},
\]
which is a contradiction, because $[\textbf{x}_\textbf{n}]$ is not conditionally finite.
\end{proof}

Finally, let us  prove Theorem \ref{thm: EberleinSmulianII}:

\begin{proof}
Let  $\textbf{x}^{**}\in\textbf{E}^{**}\sqcap\textbf{j}(\textbf{E})^\sqsubset$. The strategy of the proof of \cite[Theorem 4.7]{key-38} allows us to construct conditional sequences $\{\textbf{x}_\textbf{n}\}$ in $\textbf{K}$; $\{\textbf{x}_\textbf{n}^*\}$ in $\textbf{E}^*$; and $\{\textbf{n}_\textbf{k}\}$ in $\textbf{N}$, which is conditionally increasing, so that
\begin{equation}
\label{eq: EberleinI}
\begin{array}{cc}
\frac{\nVert\textbf{y}^{**}\nVert}{\textbf{2}}\leq\nsup[|\textbf{y}^{**}(\textbf{x}_\textbf{n}^*)|\:;\:\textbf{n}\in\textbf{N}], & \textnormal{ for all }\textbf{y}^{**}\in\overline{[\textbf{x}^{**},\textbf{x}^{**}-\textbf{x}_\textbf{n}\:;\:\textbf{n}\in\textbf{N}]}^{\nVert\cdot\nVert}
\end{array}
\end{equation}

\begin{equation}
\label{eq: EberleinII}
\begin{array}{cc}
|\textbf{x}^{**}(\textbf{x}^*_\textbf{n})-\textbf{x}^*_\textbf{n}(\textbf{x}_\textbf{k})|\leq\frac{\textbf{1}}{\textbf{k}} &
\textnormal{ for all }1\leq\textbf{n}\leq\textbf{n}_\textbf{k}.
\end{array}
\end{equation}

On the other hand, the conditional Banach-Alaoglu Theorem (see \cite[Theorem 5.10]{key-7}) yields that $\overline{\textbf{j}(\textbf{K})}^{\omega^*}$ is conditionally weakly-$*$ compact. In view of Proposition \ref{prop: clusterPoint}, $\{\textbf{j}(\textbf{x}_\textbf{n})\}$  has a conditional weak-$*$ cluster point $\textbf{x}^{**}_0\in\overline{\textbf{j}(\textbf{K})}^{\omega^*}$. Let us show that $\textbf{x}^{**}_0=\textbf{x}^{**}$.

Due to (\ref{eq: EberleinI}), it follows 
\begin{equation}
\label{eq: EberleinII}
\frac{\textbf{1}}{\textbf{2}}\nVert\textbf{x}^{**}-\textbf{x}^{**}_0\nVert\leq \nsup[|\textbf{x}^{**}(\textbf{x}^*_\textbf{n})-\textbf{x}^{**}_0(\textbf{x}^*_\textbf{n})|\:;\:\textbf{n}\in\textbf{N}].
\end{equation}

Now, for fixed $\textbf{n}\in\textbf{N}$, let us take $\textbf{p}$ with $\textbf{n}\leq\textbf{n}_\textbf{p}$ and fix some $\textbf{k}$ with $\textbf{p}\leq\textbf{k}$. It holds
\[
|\textbf{x}^{**}(\textbf{x}^*_\textbf{n})-\textbf{x}^{**}_0(\textbf{x}^*_\textbf{n})|\leq 
|\textbf{x}^{**}(\textbf{x}^*_\textbf{n})-\textbf{x}^*_\textbf{n}(\textbf{x}_\textbf{k})|+|\textbf{x}^*_\textbf{n}(\textbf{x}_\textbf{k})-\textbf{x}^{**}_0(\textbf{x}^*_\textbf{n})|\leq
\frac{\textbf{1}}{\textbf{p}}+|\textbf{x}^*_\textbf{n}(\textbf{x}_\textbf{k})-\textbf{x}^{**}_0(\textbf{x}^*_\textbf{n})|.
\]

Given that $\textbf{x}^{**}_0$ is a conditional $\omega^*$-cluster point of $\{\textbf{j}(\textbf{x}_\textbf{n})\}$, we can choose $\textbf{k}$ so that $|\textbf{x}^*_\textbf{n}(\textbf{x}_\textbf{k})-\textbf{x}^{**}_0(\textbf{x}^*_\textbf{n})|\leq \frac{\textbf{1}}{\textbf{p}}$. Finally, since $\textbf{p}\in\textbf{N}$ arbitrary, in view of (\ref{eq: EberleinII}), we conclude that $\textbf{x}^{**}_0=\textbf{x}^{**}$.

\end{proof}
\end{appendix}

\end{document}